\newcommand{\pdfgraphics}{\ifpdf\DeclareGraphicsExtensions{.pdf,.jpg}\else\fi}
\definecolor{citegreen}{rgb}{0,0.6,0}
\definecolor{refred}{rgb}{0.8,0,0}
\numberwithin{equation}{section}
\theoremstyle{plain}
\newtheorem{teo}{Theorem}[section]
\newtheorem{lemma}[teo]{Lemma}
\newtheorem{prop}[teo]{Proposition}
\newtheorem{cor}[teo]{Corollary}
\newtheorem{ackn}{Acknowledgements\!}
\theoremstyle{definition}
\newtheorem{dfnz}[teo]{Definition}
\theoremstyle{remark}
\newtheorem{Remark}[teo]{Remark}
\numberwithin{equation}{section}
\def\eps{\varepsilon}
\def\HH{\mathcal H}
\def\loc{_{\operatorname{loc}}}
\def\NN{\mathbb N}
\newcommand{\Om}        {\Omega}
\def\R{\mathbb R}
\newcommand{\e }{\varepsilon }
\newcommand{\g }{\gamma}
\renewcommand{\l }{\lambda }
\newcommand{\s }{\sigma }
\renewcommand{\t }{\tau }
\renewcommand{\O }{\Omega }
\newcommand{\intbar}{\etaathop{\int\etaakebox(-13.5,0){\rule[4pt]{.7em}{0.3pt}}
\kern-6pt}\nolimits}
\newcommand{\be}{\begin{equation}}
\newcommand{\ee}{\end{equation}}
\newcommand{\bea}{\begin{equation*}}
\newcommand{\eea}{\end{equation*}}
\newcommand{\op}{\langle}
\newcommand{\cl}{\rangle}
\newcommand{\p}{\varphi}
\def\loc{_{\operatorname{\rm loc}}}
\def\R{{{\mathbb R}}}
\def\NN{{{\mathbb N}}}
\def\eps{\varepsilon}
\def\RRR{{\mathrm R}}
\def\dert{\partial_t}
\def\ders{\partial_s}
\def\TTT{{\mathbb{T}}}
\def\TT{\mathbb T}
\def\tt{\mathfrak t}
\def\RRR{{\mathrm R}}
\def\ress  {\begin{picture}(9,8)
\put (1,0){\line(0,1){6}}
\put (1,0){\line(1,0){5}}
\end{picture}}
\def\res{\,\ress}
\def\be{\begin{equation}}
\def\ee{\end{equation}}
\def\bea{\begin{eqnarray*}}
\def\bean{\begin{eqnarray}}
\def\eean{\end{eqnarray}}
\def\eea{\end{eqnarray*}}
\begin{document}
\pdfgraphics % Use this command right after \begin{document}

\title{Motion by Curvature of Planar Networks II}

\author{{Annibale Magni}\thanks{Mathematisches Institut, 
				Albert--Ludwigs--Universit\"at,
				Eckerstrasse 1, D--79104 Freiburg, Germany
        \texttt{annibale.magni@math.uni-freiburg.de}}\,,
        {Carlo Mantegazza}\thanks{Scuola Normale Superiore, 
        Piazza dei Cavalieri 7, 56126 Pisa, Italy
        \texttt{c.mantegazza@sns.it}}\,,        
        {Matteo Novaga}
        \thanks{Dipartimento di Matematica, Universit\`a di Pisa,
        Largo Bruno Pontecorvo 5, 56127 Pisa, Italy,
        \texttt{novaga@dm.unipi.it}}
}

\date{\today}

\maketitle

\begin{abstract}
We prove that the curvature flow of an embedded planar network of
three curves connected through a triple junction, with fixed endpoints
on the boundary of a given strictly convex domain, exists smooth as long 
as the lengths of the three curves stay far from zero. If this is the case for all times, then the evolution exists for all times and the network converges to
the Steiner minimal connection between the three endpoints.
\end{abstract}

\section{Introduction}

We are interested in the long time behavior of the evolution by
curvature of a {\em triod}, that is, a network of three planar 
curves meeting at a common point (called {\it triple junction}) with
equal angles (the so-called {\it Herring condition}) and 
with fixed endpoints on the boundary of a given convex domain in the Euclidean plane.

As for the mean curvature flow, this evolution can be regarded as the
gradient flow of the {\it Length} functional. 
In this respect, the Herring condition naturally arises from
the variational interpretation of the flow and 
corresponds to the local stability of the triple junction.

An important motivation for this study is due to the appearance of
this evolution in several models of materials science for the 
motion of grain boundaries in a polycrystalline material or, more
generally, of two--dimensional multiple phase systems
(see~\cite{hermul,gurtin,gurtin2} and references therein). Another
more theoretical motivation comes from the fact that this is possibly 
the simplest evolution by curvature of a nonsmooth set. Indeed, while the
mean curvature flow of a smooth submanifold is deeply, even if not 
completely, understood, the evolution of generalized submanifolds admitting singularities, 
for instance a {\it varifold}, has not been studied too much in 
detail after the seminal work by K.~Brakke~\cite{brakke} (see
also~\cite{caraballo1,degio7} for an alternative approach based on 
an implicit variational scheme introduced by J.~Almgren, J.~Taylor
and L.~Wang in~\cite{altawa} and, independently, by S.~Luckhaus and
T.~Sturzenhecker in~\cite{luckstur}), we
mention anyway the works of T.~Ilmanen~\cite{Il:93g} and 
K.~Kasai and Y.~Tonegawa~\cite{kaston}.

The mathematical analysis of this flow started
in~\cite{bronsard} (see also~\cite{kinderliu}), where short time 
existence and uniqueness of a smooth flow has been established, and continued
in~\cite{mannovtor} where the authors proved that, at the first 
singular time, either the curvature blows--up or the length of one of the three curves goes to zero on a sequence of times. 
Extending the analysis performed by 
G.~Huisken for the mean curvature flow (see~\cite{huisk3} and
references therein), they could also rule out certain kinds of 
singularities, namely the so--called {\em Type I} singularities,
corresponding to a specific blow--up rate of the curvature at the 
singular time. A significant difficulty of this analysis is the lack
of maximum principle, due to the presence of the triple 
junction, which requires new arguments in order to estimate geometric
quantities such as the curvature and its derivatives.

In this paper we complete the program started in~\cite{mannovtor} and
we prove that no singularity can arise during the 
evolution of a triod, independently of the type of singularity. More
precisely, our main result is the following.

\begin{teo}\label{teomain}
For any smooth, embedded, initial triod $\TT_0$ in a 
strictly convex set $\Omega\subset\R^2$, with fixed endpoints $P^1,
P^2, P^3\in\partial\Omega$, there exists a unique smooth
evolution by curvature of $\TT_0$ which at every time is a
nondegenerate smooth embedded triod in 
$\Omega$, in a maximal time interval
$[0,T)$. Then, either the inferior limit of the length of one of three curves of the
triod $\TT_t$ goes to zero as $t\to T$, or $T=+\infty$ and $\TT_t$
tends as $t\to +\infty$ to the unique Steiner triod connecting the
three fixed endpoints.
\end{teo}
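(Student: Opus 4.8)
The plan is to build on the partial results of \cite{mannovtor} and reduce everything to controlling the curvature near the maximal time $T$. By the analysis there, at a first singular time $T<+\infty$ two scenarios are possible: either $\liminf_{t\to T}\|k\|_{\infty}=+\infty$ (a genuine curvature blow-up) or the length of one of the three curves tends to zero along a sequence. If we can show that a curvature blow-up is \emph{incompatible} with all three lengths staying bounded away from zero, then the dichotomy in Theorem~\ref{teomain} follows immediately: either some length has zero inferior limit, or no singularity occurs and $T=+\infty$. So the bulk of the work is a \emph{new} a-priori estimate: on any time interval on which the three lengths are bounded below by a positive constant, the curvature and all its spatial derivatives remain bounded. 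This is where the main obstacle lies --- one cannot invoke the maximum principle because of the triple junction --- and I expect the proof to proceed by a blow-up/compactness argument together with a monotonicity formula for the triod.

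First I would set up the blow-up procedure at the singular time: if $\|k\|_{\infty}\to+\infty$, rescale the network parabolically around suitable points and times where the curvature is comparable to its maximum, and extract (using the uniform length lower bound to prevent the junction from disappearing and the convexity of $\Omega$ plus the fixed endpoints to control the geometry at the outer boundary) a limit flow which is an \emph{eternal} solution of the network flow --- either a smooth curve, or a network with a triple junction satisfying the Herring condition, with polynomial growth of the curvature. Next I would apply Huisken's monotonicity formula, adapted to networks as in \cite{mannovtor}, to show that the Gaussian density of this limit is constant in time, hence the limit is a self-similarly shrinking network. The classification of such shrinkers --- lines, the standard shrinking circle (excluded because it is closed, while our curves have endpoints or meet a junction), and the unique shrinking triod --- must then be ruled out as blow-up limits: a line forces the curvature to vanish, contradicting the normalization; the shrinking triod would force the original triple junction to collapse, which contradicts the length lower bound. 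This contradiction shows $\|k\|_{\infty}$ stays bounded on such intervals.

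With $\|k\|_{\infty}$ bounded, I would bootstrap: interpolation inequalities on each curve, together with the boundary conditions at the fixed endpoints (where $k=0$ by the convexity-compatible constraint, or is controlled) and the compatibility conditions at the triple junction, give uniform bounds on $\partial_s^m k$ for all $m$, by an argument in the spirit of the higher-order estimates in \cite{mannovtor}. This yields smooth subconvergence and lets the flow be continued past $T$, proving $T$ was not maximal unless a length degenerates --- establishing the first alternative. Finally, for the long-time statement, assuming all three lengths stay bounded below for all $t\ge 0$, the uniform smooth bounds give subconvergence of $\TT_t$ along any time sequence to a limiting network; the Length functional is nonincreasing and bounded below, so its time derivative --- the $L^2$ norm of the curvature plus the junction defect --- tends to zero, forcing every sublimit to be a critical point of Length with $120^\circ$ angles and zero curvature, i.e.\ a triod made of straight segments meeting at $120^\circ$ with the prescribed endpoints. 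By strict convexity of $\Omega$ such a configuration is unique --- it is exactly the Steiner triod --- so the full flow converges to it as $t\to+\infty$. The hard part, as noted, is the blow-up classification step; everything downstream is a (substantial but) routine adaptation of the techniques already developed in \cite{mannovtor}.
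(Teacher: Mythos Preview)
Your overall architecture is right --- reduce to showing that bounded-from-below lengths preclude curvature blow-up, then blow up, classify the limit, and derive a contradiction --- but there is a genuine gap in the hardest step, and a second significant omission.

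\textbf{The flat triod case is not handled.} Your claim that ``the shrinking triod would force the original triple junction to collapse, which contradicts the length lower bound'' is a non-sequitur. The only self-shrinking triod is the \emph{flat} one (three halflines at $120^\circ$), and obtaining it as a blow-up limit around the 3--point simply says that, after parabolic rescaling, the triod looks straight near the junction --- it says nothing about any length going to zero. In fact this is exactly the case that \emph{cannot} be ruled out by the blow-up classification alone, and it is where the real work lies. The paper deals with it by a completely different mechanism: it uses Ecker--Huisken interior estimates to control $k$, $k_s$ and the tangential speed on an annulus $B_{4M\sqrt{2(T-t_{j_M})}}(x_0)\setminus B_{2M\sqrt{2(T-t_{j_M})}}(x_0)$ where each curve is a graph, then ``cuts'' the triod at moving endpoints $Q^i(t)$ on that annulus and runs a differential inequality for $\int_{\widehat{\TT}_t} k^2\,ds$ (Lemma~\ref{kkevol} in the paper, with a boundary term at the $Q^i$). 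The smallness coming from the blow-up and the Ecker--Huisken bounds make the boundary contribution harmless, and an ODE comparison shows $\int k^2\,ds$ stays bounded up to $T$, contradicting Proposition~\ref{curvexplod}. Your plan contains no analogue of this step.

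\textbf{Multiplicity one is not addressed.} Your classification of shrinkers as ``line, circle, shrinking triod'' tacitly assumes the blow-up has unit multiplicity; without this, a line with multiplicity two is also a legitimate shrinker and your contradiction at the line case evaporates. The paper obtains multiplicity one via the monotonicity of the embeddedness quantity $E(t)$ (Proposition~\ref{dlteo}) and a geometric limiting argument (Lemma~\ref{Elemma1}). This is not a technicality --- it is singled out in the introduction as the main obstruction to extending the result to more general networks. Relatedly, your line/halfline argument conflates two rescaling procedures: under Huisken's parabolic rescaling at a fixed $x_0$ there is no curvature normalization, so ``a line forces $k=0$, contradicting the normalization'' does not apply; the paper instead uses Ecker--Huisken local regularity (Propositions~\ref{noline} and~\ref{nohalf}) to pass from ``blow-up is a line/halfline'' to ``$k$ is bounded near $x_0$''.
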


Our strategy is based on the analysis of the blow--up of the flow at a
given point, independently of the behavior of the 
curvature. Using some ideas presented in~\cite{ilman3} (see
also~\cite{manmaggra}), which are based on Huisken's {\em monotonicity 
formula} (see~\cite{huisk3}), we are able to classify all the possible
blow--up limits. It turns out that the only admissible 
configurations are a straight line, a halfline or a flat unbounded
triod (see Proposition~\ref{resclimit}). As none of them arises 
from a singular point of the flow, we obtain our main result. A fundamental 
ingredient in our analysis are the interior regularity estimates of K.~Ecker and G.~Huisken (see~\cite{eckhui1}), 
which we combine with the estimates 
on the curvature and its derivatives obtained in~\cite{mannovtor}.

One difficulty in this classification is to show that the possible
limits necessarily have multiplicity one. This follows from a 
geometric argument proposed in~\cite{hamilton3} (see also~\cite{huisk2})
and extended in~\cite{mannovtor} to the case of a triod, consisting in estimating 
from below a kind of ``embeddedness measure'', which is strictly positive when no self--intersections are present and showing that 
it is monotonically increasing for an evolving triod. We underline that 
it is not clear to us how to obtain a similar bound for a general
network (with multiple triple junctions), since the analogous quantity is no longer 
monotone if there are more than two triple junctions.

\medskip

Recently, T.~Ilmanen, A.~Neves and F.~Schulze announced a
comprehensive analysis of the evolution by curvature 
of a general network with several 
{\em multiple} (not only triple) junctions, 
with any angles between the concurring edges. This would clearly
include and greatly generalize our work. 

In the preliminary paper~\cite{Ilnevsch}, the authors 
prove a local regularity result stating that, if 
the Gaussian density is bounded away from two and the network has no loops in an open set
$A$, then the evolution of the network is smooth in $A$.

Independently of such a result, this paper deals with the 
simpler situation of a single 
triod with fixed endpoints, in a strictly convex domain. Our goal is 
simply to show that singularities
cannot happen in this special case, hence completing the program
started in~\cite{mannovtor}. 
We point out that our method cannot be directly extended 
to the case of a network with more than two triple 
junctions, due to the aforementioned main difficulty in showing that the  
blow--ups at the singular points have multiplicity one.

\medskip

\begin{ackn} Annibale~Magni was partially supported by
the DFG Collaborative Research Center SFB/Transregio 71.\\
Carlo~Mantegazza and Matteo~Novaga were partially supported by
  the Italian FIRB Ideas ``Analysis and Beyond''.
\end{ackn}

\section{Definitions and Preliminary Results} 

\begin{dfnz}
Let $\O \in \R^2$ be a smooth open set and $\TT = \cup^3_{i=1}
\s^i$ the union of three embedded (at least $C^2$), regular
(i.e. $\s_x \neq 0$ for all $x \in [0,1]$) curves $\s^i : [0,1]
\rightarrow \overline{\O}$. Let $P^i \in \partial
\O$, for $i \in \{1,2,3\}$, three distinct points. We say that $\TT$ is a
{\it triod} in $\Omega$ if

\begin{itemize}
\item $\sigma^i(x)\in\partial\Omega$ if and only if $x=1$, for all $i
  \in \{1,2,3\}$;
\item $O = \s^i(0)$ for all $i \in \{1,2,3\}$;
\item $\s^i(x)=\s^j(y)$ for $i,j\in\{1,2,3\}$ and $x,y\in[0,1]$ if and
  only if $x=y=0$ or $i=j$ and $x=y$;
\item $\s^i(1) =  P^i$ for all $i \in \{1,2,3\}$;
\item $\sum^3_{i=1}\frac{\s^i_x(0)}{|\s^i_x(0)|} = 0$. 
\end{itemize}
Under these conditions, we will call $O$ the {\em 3--point of the
  triod} $\TT$ and $P^i$ the  {\em endpoints of the triod} $\TT$.
\end{dfnz}

For a given ``initial'' triod $\TT_0 = \cup^3_{i=1} \s^i$, we consider
the following motion by curvature (see~\cite{bronsard} and~\cite{mannovtor}).

\begin{dfnz}
We say that the one parameter family of triods $\TT_t = \cup^3_{i=1}
\g(\cdot, t)$ evolve by curvature (staying embedded) in the time interval
$[0,T)$ ($T > 0$), if the three family of curves $\g^i : [0,1] \times
[0,T) \rightarrow \overline{\O}$ are at least of class $C^2$ in the first variable and
of class $C^1$ in the second one, and satisfy the following quasilinear
parabolic system,
\begin{equation}\label{problema}
\begin{cases}
\begin{array}{lll}
\gamma_x^i(x,t)\not=0\qquad &&\text{ regularity}\\
\gamma^i(x,t)\in\partial\Omega\qquad &\text{{ iff}
  $x=1$}&\text{intersection with $\partial\Omega$ only at the
  endpoints}\\
\gamma^i(x,t)\not=\gamma^i(y,t)\qquad &\text{{ if} $x\not=y$}&\text{ simplicity}\\
\gamma^i(x,t)=\gamma^j(y,t)\,\Leftrightarrow\,x,y=0\qquad&\text{{    if} $i\not=j$}
&\text{ intersection only  at the 3--point}\\
\sum_{i=1}^3\frac{\gamma_x^i(0,t)}{\vert{\gamma_x^i(0,t)}\vert}=0
\qquad&&\text{ angles of $120$ degrees at the 3--point}\\
\gamma^i(1,t)=P^i
\qquad &&\text{ fixed endpoints condition}\\
\gamma^i(x,0)=\sigma^i(x)\qquad &&\text{ initial data}\\
\gamma^i_t(x,t)=\frac{\gamma_{xx}^i(x,t)}{{\vert{\gamma_x^i(x,t)}\vert}^2}\qquad
&&\text{ motion by curvature}
\end{array}
\end{cases}
\end{equation}
for every $x\in[0,1]$, $t\in[0,T)$ and $i, j\in\{1, 2, 3\}$.
\end{dfnz}

To denote a flow we will often
write simply $\TTT_t$ instead of letting explicit the curves $\gamma^i$
which compose the triods.\\
Moreover, it will be also useful to describe a triod as a map
$F:\TTT\to\overline{\Omega}$ from a fixed {\em standard} triod $\TTT$
in $\R^2$,  composed of three unit segments from the origin in the
plane, forming angles of $120$ degrees. In this case we will still
denote with $O$ the 3--point of $\TTT$ and with $P^i$ the three
endpoints of such standard triod.\\
The evolution then will be given by a map
$F:\TTT\times[0,T)\to\overline{\Omega}$,
constructed naturally from  the curves
$\gamma^i$, so $\TTT_t=F(\TTT,t)$.

In~\cite{mannovtor} the following short time existence and uniqueness theorem has been proven.

\begin{teo}
For any smooth initial triod $\TT_0$ in a 
convex set $\Omega\subset\R^2$, there exists a unique smooth
solution of Problem~\eqref{problema} in a maximal time interval
$[0,T)$, with $T>0$. In particular, the evolving triod does not exit
the open set $\Omega$ (with exception of the three fixed 
endpoints $P^i$).
\end{teo}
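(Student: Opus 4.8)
The plan is to establish short--time existence by the classical linearization--plus--fixed--point scheme for quasilinear parabolic systems with boundary conditions, and to deduce the confinement in $\Omega$ from a maximum principle adapted to the triple junction. For the existence part one regards the unknown as the triple $\gamma=(\gamma^1,\gamma^2,\gamma^3)$, that is six scalar functions on $[0,1]$, each solving the scalar equation $\gamma^i_t=|\gamma^i_x|^{-2}\gamma^i_{xx}$; with this specific parametrization the system is uniformly parabolic, in contrast with the purely geometric normal flow, the price being the extra tangential motion. The twelve boundary conditions split into the six Dirichlet conditions $\gamma^i(1,t)=P^i$ at $x=1$ and, at $x=0$, the four concurrency conditions $\gamma^1(0,t)=\gamma^2(0,t)=\gamma^3(0,t)$ together with the two scalar Herring conditions $\sum_i\gamma^i_x(0,t)/|\gamma^i_x(0,t)|=0$. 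Writing $\gamma^i=\sigma^i+u^i$ and freezing the coefficient $|\sigma^i_x|^{-2}$ one obtains a linear parabolic system for $u=(u^i)$ whose right--hand side and boundary terms are, schematically, quadratic in $u$ and its first derivatives.

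For the linear problem one has to check: the uniform parabolicity of the principal part, which is clear since $|\sigma^i_x|>0$ on the compact interval $[0,1]$; the Lopatinskii--Shapiro complementing condition at $x=1$, which is the trivial Dirichlet case, and at $x=0$, where one linearizes the Herring operator (the variation of $\gamma^i_x/|\gamma^i_x|$ being the component of $\delta\gamma^i_x$ normal to $\sigma^i$, rescaled) and verifies that the $6\times6$ boundary value problem coupling the three half--line models admits only the trivial exponentially decaying solution --- here the $120^{\circ}$ geometry, i.e. the fact that the three unit tangents sum to zero, is precisely what makes the complementing determinant nonvanishing; and the compatibility conditions of order zero and one relating $\sigma=(\sigma^i)$ to the boundary relations, the zeroth order ones holding because $\TT_0$ is a triod and the first order ones being handled (or relaxed) as in~\cite{bronsard, mannovtor}. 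Granting this, the linear theory of Solonnikov (see also Ladyzhenskaya--Solonnikov--Ural'tseva) yields a unique solution in the parabolic H\"older class $C^{2+\alpha,1+\alpha/2}$, and a contraction argument on a sufficiently short time interval --- using that the nonlinear terms are Lipschitz with small constant for small $T$ --- produces a unique solution of~\eqref{problema} on some $[0,T_{0})$; the maximal interval $[0,T)$ is then obtained as the union of all intervals of existence, and uniqueness on all of $[0,T)$ follows from a standard continuation argument, or from a Gr\"onwall estimate on the difference of two solutions. I expect the only genuinely non--standard point, and hence the main obstacle, to be the verification of the complementing condition at the $3$--point: this is exactly where the absence of a single smooth underlying manifold is felt, and it must be carried out by hand.

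For the confinement, fix an arbitrary supporting half--plane $H=\{x\in\R^2:\langle x,e\rangle\le c\}$ of the convex set $\Omega$ and set $f^i(x,t):=c-\langle\gamma^i(x,t),e\rangle$. Since $x\mapsto c-\langle x,e\rangle$ is affine, each $f^i$ solves the scalar linear parabolic equation $f^i_t=|\gamma^i_x|^{-2}f^i_{xx}$ on $[0,1]\times[0,T']$ for every $T'<T$, with coefficient bounded and bounded away from zero by compactness, and with $f^i(\cdot,0)>0$ on $(0,1)$ (as $\sigma^i(x)\in\Omega$ for $x<1$) and $f^i(1,t)=c-\langle P^i,e\rangle\ge0$. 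If $\min f^i$ over this parabolic cylinder were negative, the common value at $x=0$ would force the minimum to be attained, for each $i$, at $(0,t_{0})$ with $t_{0}>0$, and the strong maximum principle together with $f^i(\cdot,0)>0$ on $(0,1)$ would then put us in the range of applicability of the Hopf boundary lemma, giving $\partial_x f^i(0,t_{0})>0$ for every $i$, which contradicts the identity $\sum_i\partial_x f^i(0,t)/|\gamma^i_x(0,t)|=0$ obtained by pairing the Herring condition with $-e$. Hence $f^i\ge0$, i.e. $\TT_t\subset H$, and intersecting over all supporting half--planes gives $\TT_t\subset\overline{\Omega}$. Finally, if some $\gamma^i(x_{0},t_{0})$ with $x_{0}<1$ and $t_0>0$ lay on $\partial\Omega$, a supporting line there would make the corresponding $f^i$ attain the value $0$ either at a spatial--interior point, whence by the strong maximum principle $f^i\equiv0$ on that branch for $t\le t_{0}$, contradicting $f^i(\cdot,0)>0$ on $(0,1)$, or at the $3$--point, which is ruled out by the Hopf argument above; therefore the evolving triod stays in the open set $\Omega$ away from the fixed endpoints.
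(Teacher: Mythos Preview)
The paper does not prove this theorem; it simply quotes it as a result established in~\cite{mannovtor} (building on~\cite{bronsard}). So there is no ``paper's own proof'' to compare against. That said, your outline is precisely the strategy carried out in those references: recast the geometric flow as a quasilinear parabolic system via the special parametrization $\gamma^i_t=|\gamma^i_x|^{-2}\gamma^i_{xx}$, linearize, check the complementing (Lopatinskii--Shapiro) condition at both endpoints, invoke Solonnikov-type linear theory, and close with a contraction argument. You are right that the only nonstandard step is the complementing condition at the $3$--point, and you are also right that the $120^\circ$ geometry is exactly what makes the relevant determinant nonzero; but this verification is a genuine computation that you have not performed, so as written your existence argument is a (correct) sketch rather than a proof.

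Your confinement argument, on the other hand, is complete and correct. The key observation---that at a hypothetical first touching time all three $f^i$ share the same value at $x=0$, so Hopf gives $\partial_x f^i(0,t_0)>0$ for each $i$, while the Herring condition forces $\sum_i \partial_x f^i(0,t_0)/|\gamma^i_x(0,t_0)|=0$---is exactly the right substitute for the maximum principle in the presence of the junction. This is also the argument used in~\cite{mannovtor}.
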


The goal of this paper is to show the following result which, with the
above theorem, gives Theorem~\ref{teomain} in the introduction.

\begin{teo}\label{corfin}
Given a triod $F: \TT\times [0,T) \to \overline{\Omega}$ evolving by
curvature, where $\Omega$ is a strictly convex open subset of $\R^2$,
either the inferior limit of the 
length of one of three curves of the 
triod $\TT_t$ goes to zero as $t\to T$, or $T=+\infty$ and $\TT_t$
tends as $t\to +\infty$ to the unique Steiner triod connecting the
three fixed endpoints.
\end{teo}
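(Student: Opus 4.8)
The plan is to argue by contradiction, ruling out any singularity of the flow, and then to analyse the behaviour as $t\to+\infty$. So suppose first that $T<+\infty$ and that $\liminf_{t\to T}L^i(t)>0$ for each $i\in\{1,2,3\}$, where $L^i(t)=\mathcal{H}^1(\g^i(\cdot,t))$. By the description of the first singular time in~\cite{mannovtor}, this forces the curvature to blow up, i.e. $\max_{\TT_t}|\kk|\to+\infty$ as $t\to T$. I would then perform Hamilton's point selection to get points $(p_j,t_j)$ with $t_j\to T$, curvatures $Q_j:=|\kk|(p_j,t_j)\to+\infty$, and the bound $|\kk|\le C\,Q_j$ on parabolic neighbourhoods of $(p_j,t_j)$ whose size, after rescaling, diverges to infinity. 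Parabolically rescaling the flow by $Q_j$ about $(p_j,t_j)$ and using the estimates on $\kk$ and its derivatives from~\cite{mannovtor} together with the interior estimates of Ecker and Huisken~\cite{eckhui1} --- the latter also serving to control the flow near the fixed endpoints $P^i\in\partial\Om$, exploiting the strict convexity of $\Om$ --- I would extract a subsequential limit flow $\TT_\infty^t$ of smooth, embedded, possibly unbounded triods (or lines/halflines) with $|\kk|(0,0)=1$, in particular \emph{nontrivial}.

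To identify $\TT_\infty^t$ I would invoke Huisken's monotonicity formula~\cite{huisk3} adapted to the triod (following~\cite{ilman3,manmaggra}): along the flow $t\mapsto\int_{\TT_t}\rho_{(x_0,T)}\,d\mathcal{H}^1$ is monotone non--increasing, because the boundary contribution at the fixed endpoints has a favourable sign by the strict convexity of $\Om$, while the contribution at the $3$--point vanishes by the Herring condition $\sum_i\g^i_x(0,t)/|\g^i_x(0,t)|=0$. Hence the Gaussian density $\Theta(x_0):=\lim_{t\to T}\int_{\TT_t}\rho_{(x_0,T)}\,d\mathcal{H}^1$ exists and is finite, the drop of $\int\rho$ along the rescaled flows tends to zero, and the equality case forces $\TT_\infty^t$ to be a self--similar shrinking solution. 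The step I expect to be the main obstacle is to show that $\TT_\infty^t$ has \emph{multiplicity one}: for this I would use the monotone ``embeddedness measure'' of Hamilton~\cite{hamilton3} (see also~\cite{huisk2}), extended to a triod in~\cite{mannovtor} --- a ratio of extrinsic to intrinsic distance between pairs of points of $\TT_t$ which stays strictly positive while the triod is embedded and is non--decreasing in time --- and pass this bound to the blow--up to exclude self--intersections and higher multiplicity in the limit.

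Granted multiplicity one, $\TT_\infty^t$ is, for each $t<0$, an embedded self--shrinker which is a tree carrying at most one triple junction (with $120^\circ$ angles) and at most three ends, possibly together with a straight line originating from $\partial\Om$ when $x_0\in\partial\Om$. Classifying such configurations --- this is Proposition~\ref{resclimit} --- shows that the only possibilities are a straight line through the origin, a halfline from the origin, or the flat unbounded triod made of three halflines meeting at $120^\circ$; in each case $\kk\equiv0$. This contradicts $|\kk|(0,0)=1$, so no curvature blow--up can occur, and therefore $T=+\infty$.

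It remains to treat the regime $T=+\infty$, $\liminf_{t\to+\infty}L^i(t)>0$, using the gradient--flow structure of the flow. The total Length $\ds\sum_i L^i(t)$ is non--increasing and bounded below, so $\int_t^{t+1}\!\!\int_{\TT_\tau}\kk^2\,d\mathcal{H}^1\,d\tau\to0$; combined with the now uniform--in--time bounds on $\kk$ and its derivatives (no blow--up, lengths bounded away from zero), interpolation yields $\max_{\TT_t}|\kk|\to0$ and smooth subsequential convergence, as $t\to+\infty$, to a triod with $\kk\equiv0$, that is to a Steiner triod joining $P^1,P^2,P^3$ in the strictly convex set $\Om$, which is unique. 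Finally, since this limit is a strict local minimiser of the Length among triods with the prescribed endpoints (equivalently, the Steiner triod is linearly stable), a Lojasiewicz--Simon inequality promotes subconvergence to full convergence of $\TT_t$ to it as $t\to+\infty$. This establishes Theorem~\ref{corfin}, hence, together with the short time existence theorem, Theorem~\ref{teomain}.
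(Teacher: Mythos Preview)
Your argument has a genuine gap at the blow--up step: you conflate two different rescaling procedures. Hamilton's point selection --- rescaling by the maximal curvature $Q_j$ about $(p_j,t_j)$ --- does produce a limit flow with $|k|(0,0)=1$, but the equality case in Huisken's monotonicity formula identifies the limit as a self--shrinker only under Huisken's rescaling by $\sqrt{2(T-t)}$ about a \emph{fixed} space--time point $(x_0,T)$. Since Type~I singularities were already excluded in~\cite{mannovtor}, any singularity here is Type~II, so $Q_j^2(T-t_j)\to+\infty$ and the Hamilton--rescaled limit is an \emph{eternal} solution, not a self--shrinker; for a single curve the model example is the grim reaper, which has $k\not\equiv0$. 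Thus the classification of Proposition~\ref{resclimit} (which concerns self--shrinkers arising from Huisken's rescaling) does not apply to your limit, and there is no contradiction with $|k|(0,0)=1$. Conversely, if you use Huisken's rescaling you do obtain the flat limits of Proposition~\ref{resclimit}, but then the limit has $k\equiv0$ and carries no pointwise curvature information, so again no direct contradiction.

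The paper closes this gap by a different, substantially more involved, mechanism. It performs Huisken's rescaling at every reachable point, obtains the flat classification of Proposition~\ref{resclimit}, and then transfers this flatness back to the \emph{unrescaled} flow via local graphical estimates. When the blow--up is a line or a halfline, the evolving curves are eventually graphs with controlled slope in a fixed ball around $x_0$, and the Ecker--Huisken interior estimates (Corollary~\ref{corgraph}, Propositions~\ref{teoeh} and~\ref{prokappa}) yield a uniform curvature bound there (Propositions~\ref{noline} and~\ref{nohalf}). The hard case is the flat triod: one cuts $\TTT_t$ at three moving endpoints $Q^i(t)$ lying in an annulus where Ecker--Huisken controls $k$, $k_s$ and $\lambda$, derives a differential inequality for $\int_{\widehat{\TTT}_t} k^2\,ds$ on the inner subtriod (Lemma~\ref{kkevol}) whose boundary contributions are small by the annular estimates, and integrates this ODE from a well--chosen time (extracted via a second application of Proposition~\ref{resclimit}) to keep $\int k^2$ bounded up to $T$, contradicting Proposition~\ref{curvexplod}. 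This pseudolocality--type argument, in the spirit of~\cite{manmaggra}, is the substantive content of the proof and is missing from your outline.

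Two smaller points. The endpoint contribution in the monotonicity formula does not have a favourable sign coming from the convexity of $\Omega$; the paper only shows it is uniformly bounded and vanishes as $t\to T$ (Lemma~\ref{stimadib}). And for the long--time convergence no Lojasiewicz--Simon inequality is needed: once every subsequential limit is a zero--curvature triod joining $P^1,P^2,P^3$, the \emph{uniqueness} of the Steiner configuration forces all subsequential limits to coincide, hence full convergence (this is how~\cite[Section~8]{mannovtor} argues).
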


We remark that the first situation can actually happen, for instance,
if the triangle formed by the points $P^1$, $P^2$, $P^3$ has one angle larger than 120
degrees. Notice that the strict convexity of $\Omega$ implies that
such triangle is nondegenerate.

Along the paper we will make extensive use of the following notation,
$$
\begin{array}{ll}
\tau^i=\tau^i(x,t)= \frac{\gamma_x^i}{\vert\gamma_x^i\vert}
&\qquad\text{{ unit tangent vector to} $\gamma^i$}\,,\\
\nu^i=\nu^i(x,t)= {\mathrm R} \tau^i(x,t)={\mathrm R} \frac{\gamma_x^i}{\vert\gamma_x^i\vert}
&\qquad\text{{ unit normal vector to} $\gamma^i$}\,,\\
O=O(t)=\gamma^i(0,t)
&\qquad\text{{ 3--point of } $\TTT_t$}\,,\\
\underline{v}^i=\underline{v}^i(x,t)= \frac{\gamma_{xx}^i}{{\vert{\gamma_x^i}\vert}^2}
&\qquad\text{{ velocity of the point} $\gamma^i(x,t)$}\,,\\
\lambda^i=\lambda^i(x,t)= \frac{\langle\gamma_{xx}^i\,\vert\,\tau^i\rangle}
{{\vert{\gamma_x^i}\vert}^2}=\frac{\langle\gamma_{xx}^i\,\vert\,\gamma^i_x\rangle}
{{\vert{\gamma_x^i}\vert}^3}
&\qquad\text{{ tangential velocity of the point} $\gamma^i(x,t)$}\,,\\
k^i=k^i(x,t)= \frac{\langle\gamma_{xx}^i\,\vert\,\nu^i\rangle}
{{\vert{\gamma_x^i}\vert}^2}=
\langle\partial_s\tau^i\,\vert\,\nu^i\rangle=
-\langle\partial_s \nu^i\,\vert\,\tau^i\rangle
&\qquad\text{{ curvature at the point} $\gamma^i(x,t)$}\,,
\end{array}
$$
where with $s$ we have denoted the arclength parameter on any of the
curves and with $\RRR : \R^2 \rightarrow \R^2$ the counterclockwise
rotation centered in the origin of $\R^2$ of angle
$\pi/2$. Furthermore, we set $\underline{\lambda}^i=\lambda^i\tau^i$
and $\underline{k}^i=k^i\nu^i$, from which it follows that
$\underline{v}^i=\underline{\lambda}^i+\underline{k}^i$ and
$|\underline{v}^i|^2=(\lambda^i)^2+(k^i)^2$. We will also denote by
$L^i$ the length of the $i$--th curve of the triod and by $L = L^1 +
L^2 + L^3$ its global length.

We now state some results which have been proven in~\cite{mannovtor}.

\begin{lemma}\label{commute}
If $\gamma$ is a curve of a triod moving by curvature, which means that
$$
\gamma_t=\frac{\gamma_{xx}}{|\gamma_x|^2}=\lambda\tau+k\nu\,,
$$
then the following commutation rule holds,
$$
\partial_t\partial_s=\partial_s\partial_t +
(k^2 -\lambda_s)\partial_s \,.
$$
\end{lemma}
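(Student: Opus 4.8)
The statement is a purely local computation along a single curve, so the plan is to work with the arclength derivative written as $\partial_s=|\gamma_x|^{-1}\partial_x$, where $x$ is the fixed parameter in $[0,1]$ and hence $\partial_t$ commutes with $\partial_x$. I would first reduce the commutator $[\partial_t,\partial_s]=\partial_t\partial_s-\partial_s\partial_t$ to multiplication by a single scalar, and then identify that scalar using the equation of motion together with the Frenet relations for a planar curve.

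Applying both operators to an arbitrary (scalar- or vector-valued) quantity $f(x,t)$ and using $\partial_t\partial_x=\partial_x\partial_t$, I get $\partial_t\partial_s f=\partial_t\bigl(|\gamma_x|^{-1}\partial_x f\bigr)=-\bigl(\partial_t|\gamma_x|\bigr)|\gamma_x|^{-2}\partial_x f+|\gamma_x|^{-1}\partial_x\partial_t f$, whereas $\partial_s\partial_t f=|\gamma_x|^{-1}\partial_x\partial_t f$. Subtracting, the terms containing $\partial_x\partial_t f$ cancel and I am left with
\[
[\partial_t,\partial_s]f=-\frac{\partial_t|\gamma_x|}{|\gamma_x|}\,\partial_s f,
\]
so the whole problem collapses to evaluating the scalar $\partial_t\log|\gamma_x|$.

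To compute it, I differentiate $|\gamma_x|^2=\langle\gamma_x,\gamma_x\rangle$ in time and use $\gamma_{xt}=\partial_x\gamma_t=|\gamma_x|\,\partial_s\gamma_t$, which after dividing by $|\gamma_x|^2$ and using $\gamma_x=|\gamma_x|\tau$ gives $\partial_t|\gamma_x|/|\gamma_x|=\langle\partial_s\gamma_t,\tau\rangle$. Now I insert the equation of motion $\gamma_t=\lambda\tau+k\nu$ and differentiate along the curve using $\partial_s\tau=k\nu$ and $\partial_s\nu=-k\tau$, which follow directly from the definition $k=\langle\partial_s\tau,\nu\rangle=-\langle\partial_s\nu,\tau\rangle$ recorded in the notation. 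This yields $\partial_s\gamma_t=(\lambda_s-k^2)\tau+(k_s+\lambda k)\nu$, whose tangential component gives $\partial_t\log|\gamma_x|=\lambda_s-k^2$. Substituting into the commutator identity above produces $[\partial_t,\partial_s]=(k^2-\lambda_s)\partial_s$, which is exactly the claimed rule.

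I do not expect a genuine obstacle here: the computation is a mechanical application of the chain rule and of the commuting of $\partial_t$ with $\partial_x$ in the fixed parametrization. The only point demanding care is the sign bookkeeping in the Frenet relations and in $\partial_t\log|\gamma_x|$, since an error in either would flip the sign of the coefficient $k^2-\lambda_s$; I would therefore double-check the computation of $\langle\partial_s\gamma_t,\tau\rangle$ against the sign conventions fixed in the notation table.
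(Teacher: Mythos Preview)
Your proof is correct and is the standard derivation of this commutation rule. The paper itself does not prove this lemma; it is stated among the results quoted from~\cite{mannovtor}, so there is no proof in the paper to compare against, but the computation you give is exactly the one underlying that reference and matches the way the rule is used immediately afterwards in Lemma~\ref{evoluzioni}.
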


With the help of Lemma~\ref{commute} one gets the following formulas.

\begin{lemma}\label{evoluzioni}
For any curve evolving by curvature, there holds
\begin{align*}
\dert\tau=&\,
\dert\ders\gamma=\ders\dert\gamma+(k^2-\lambda_s)\ders\gamma =
\ders(\lambda\tau+k\nu)+(k^2-\lambda_s)\tau =
(k_s+k\lambda)\nu\\
\dert\nu=&\, \dert({\mathrm R}\tau)={\mathrm
R}\,\dert\tau=-(k_s+k\lambda)\tau\\
\dert k=&\, \dert\langle \ders\tau , \nu\rangle=
\langle\dert\ders\tau , \nu\rangle
= \langle\ders\dert\tau , \nu\rangle +
(k^2-\lambda_s)\langle\ders\tau , \nu\rangle\\
=&\, \ders\langle\dert\tau , \nu\rangle + k^3-k\lambda_s =
\ders(k_s+k\lambda) + k^3-k\lambda_s\nonumber\\
=&\, k_{ss}+k_s\lambda + k^3\nonumber\\
\dert\lambda =&\, -\dert\partial_x\frac{1}{|\gamma_x|}=
\partial_x \frac{\langle\gamma_x , \gamma_{tx}\rangle}
{|\gamma_x|^3}=
\partial_x \frac{\langle\tau , \ders (\lambda\tau+k\nu)\rangle}
{|\gamma_x|}=\partial_x \frac{(\lambda_s - k^2)}
{|\gamma_x|}\\
=&\, \ders(\lambda_s - k^2) -\lambda(\lambda_s -
k^2)=\lambda_{ss} -\lambda\lambda_s - 2kk_s +\lambda k^2\,.\nonumber
\end{align*}
\end{lemma}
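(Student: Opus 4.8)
The plan is to derive all four identities as consequences of a single structural tool, namely the commutation rule of Lemma~\ref{commute}, together with the Frenet relations $\ders\tau=k\nu$ and $\ders\nu=-k\tau$. The latter are immediate from the definition of the curvature: since $\tau$ and $\nu=\RRR\tau$ form an orthonormal frame, $\ders\tau$ is a multiple of $\nu$ with factor $\langle\ders\tau,\nu\rangle=k$, and applying the fixed rotation $\RRR$ yields the companion relation. I would also record at the outset the evolution of the line element, $\dert|\gamma_x|=\langle\tau,\gamma_{xt}\rangle=(\lambda_s-k^2)|\gamma_x|$, obtained by substituting $\dert\gamma=\lambda\tau+k\nu$ into $\gamma_{xt}=\partial_x(\lambda\tau+k\nu)=|\gamma_x|\ders(\lambda\tau+k\nu)$ and projecting onto $\tau$. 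This identity is in fact precisely what produces the extra term $(k^2-\lambda_s)\ders$ in the commutation rule, and it will reappear in the computation for $\lambda$.

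The first two identities follow almost directly. For $\dert\tau$ I would write $\tau=\ders\gamma$, commute the derivatives via Lemma~\ref{commute}, and insert $\dert\gamma=\lambda\tau+k\nu$; expanding $\ders(\lambda\tau+k\nu)$ with the Frenet relations and cancelling the tangential contribution against $(k^2-\lambda_s)\tau$ leaves exactly $(k_s+k\lambda)\nu$. For $\dert\nu$ I would use that the rotation $\RRR$ is constant in time, so $\dert\nu=\RRR\,\dert\tau$, and then $\RRR\nu=-\tau$ converts the previous result into $-(k_s+k\lambda)\tau$. Orthonormality of the frame guarantees consistency, since both $\dert\tau$ and $\dert\nu$ are forced to be multiples of the opposite frame vector.

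For $\dert k$ I would start from $k=\langle\ders\tau,\nu\rangle$ and differentiate in time, again commuting $\dert$ past $\ders$ and using the two evolution equations just established; the term $\langle\ders\tau,\dert\nu\rangle$ vanishes because $\dert\nu$ is tangential. The leading piece $\langle\ders\dert\tau,\nu\rangle=\ders(k_s+k\lambda)$ supplies $k_{ss}+k_s\lambda+k\lambda_s$, while the correction $(k^2-\lambda_s)\langle\ders\tau,\nu\rangle$ contributes $k^3-k\lambda_s$; the $k\lambda_s$ terms cancel and one is left with $k_{ss}+k_s\lambda+k^3$.

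The most delicate computation is $\dert\lambda$, and I expect it to be the main bookkeeping obstacle. The clean route is to observe that $\lambda=\partial_x|\gamma_x|/|\gamma_x|^2=-\partial_x(1/|\gamma_x|)$, interchange the commuting operators $\dert$ and $\partial_x$, and use the line-element evolution to rewrite $\dert(1/|\gamma_x|)=(k^2-\lambda_s)/|\gamma_x|$. Converting the remaining $\partial_x$ into $|\gamma_x|\ders$ and using $\ders(1/|\gamma_x|)=-\lambda/|\gamma_x|$ then turns $\partial_x[(\lambda_s-k^2)/|\gamma_x|]$ into $\ders(\lambda_s-k^2)-\lambda(\lambda_s-k^2)$, which expands to $\lambda_{ss}-\lambda\lambda_s-2kk_s+\lambda k^2$. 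The one thing to watch carefully throughout is the distinction between $\partial_x$ and $\ders$ and the non-constancy of $|\gamma_x|$ in both variables; once the line-element identity is in hand, every remaining step is a routine application of the product rule.
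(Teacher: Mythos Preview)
Your proposal is correct and follows essentially the same route as the paper: the paper's ``proof'' is precisely the chain of equalities displayed in the statement, derived from the commutation rule of Lemma~\ref{commute} together with the Frenet relations, and your write-up reproduces those steps while making explicit a few points (the line-element evolution, the vanishing of $\langle\ders\tau,\dert\nu\rangle$) that the paper leaves implicit.
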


Taking into account the compatibility conditions at the 3--point, we have the
following lemma.

\begin{lemma}
At the 3--point of a triod $\TT_t$ evolving as in Problem~\eqref{problema} hold
\begin{gather*}
\lambda^i=\frac{k^{i-1}-k^{i+1}}{\sqrt{3}} \\
k^i=\frac{\lambda^{i+1}-\lambda^{i-1}}{\sqrt{3}}
\end{gather*}
where the indices are understood modulo three. Moreover
\begin{equation*}
\sum_{i=1}^3 k^i=\sum_{i=1}^3\lambda^i=0
\end{equation*}
$$
k_s^i+\lambda^i k^i=k_s^j+\lambda^j k^j
$$
for every pair $i, j \in \{1,2,3\}$.
\end{lemma}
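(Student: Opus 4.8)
The plan is to exploit the single structural fact that all three curves share the 3--point $O(t)$, together with the $120$ degree (Herring) condition, and to reduce everything to elementary linear algebra for the three unit vectors meeting at $O$. First I would record the geometry at the 3--point. Since the tangents $\tau^1,\tau^2,\tau^3$ (pointing away from $O$, as in the Herring condition $\sum_i\tau^i(0,t)=0$) make mutual angles of $120$ degrees, labelling them in the counterclockwise order compatible with the rotation ${\mathrm R}$ gives $\tau^{i+1}={\mathrm R}_{120}\tau^i$ and $\nu^i={\mathrm R}\tau^i$, indices modulo three. A direct computation using $\cos(\pm120^\circ)=-1/2$ and $\sin(\pm120^\circ)=\pm\sqrt3/2$ then yields the two vector identities
\[
\tau^{i+1}-\tau^{i-1}=\sqrt3\,\nu^i,\qquad \nu^{i-1}-\nu^{i+1}=\sqrt3\,\tau^i,
\]
together with $\sum_i\tau^i=\sum_i\nu^i=0$ (the latter obtained by applying ${\mathrm R}$ to the Herring condition).

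Second comes the key observation. Because $\gamma^i(0,t)=O(t)$ for every $i$, differentiating in $t$ at the fixed parameter $x=0$ shows that the velocity $\underline v^i(0,t)=\lambda^i\tau^i+k^i\nu^i$ equals the same vector $V:=O'(t)$ for all three curves. Projecting onto the orthonormal frame $(\tau^i,\nu^i)$ gives
\[
\lambda^i=\langle V,\tau^i\rangle,\qquad k^i=\langle V,\nu^i\rangle .
\]
Pairing the two vector identities above with $V$ then produces at once $\lambda^i=(k^{i-1}-k^{i+1})/\sqrt3$ and $k^i=(\lambda^{i+1}-\lambda^{i-1})/\sqrt3$, while pairing $\sum_i\tau^i=\sum_i\nu^i=0$ with $V$ (equivalently, summing the two relations, which telescopes) gives $\sum_i k^i=\sum_i\lambda^i=0$.

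Third, for the identity $k_s^i+\lambda^i k^i=k_s^j+\lambda^j k^j$ I would differentiate the Herring condition $\sum_i\tau^i(0,t)=0$ in time and insert $\partial_t\tau^i=(k_s^i+k^i\lambda^i)\nu^i$ from Lemma~\ref{evoluzioni} (a pointwise formula, hence valid at $x=0$), obtaining $\sum_i a_i\nu^i=0$ with $a_i:=k_s^i+k^i\lambda^i$. Since the three unit vectors $\nu^i$ span $\R^2$, the linear map $(a_1,a_2,a_3)\mapsto\sum_i a_i\nu^i$ has a one--dimensional kernel; as $(1,1,1)$ lies in it (because $\sum_i\nu^i=0$), the kernel is exactly $\{(c,c,c)\}$, forcing $a_1=a_2=a_3$, which is precisely the claim.

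The computations are routine; the only points demanding care are fixing the cyclic orientation of the labels so that the signs in $\lambda^i=(k^{i-1}-k^{i+1})/\sqrt3$ come out as stated (the opposite ordering flips the sign), and checking that the time differentiation of the Herring condition is justified, which uses that the flow is smooth so that $\partial_t\tau^i$ and $k_s^i$ are well defined up to the 3--point. I expect the mild bookkeeping of these signs and orientations to be the main, though entirely elementary, obstacle.
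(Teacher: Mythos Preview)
Your argument is correct and is the natural proof: equating the three velocities at $O(t)$ and projecting onto each $(\tau^i,\nu^i)$ frame, together with the elementary $120$--degree vector identities, gives the first block of relations, and time--differentiating the Herring condition combined with $\partial_t\tau=(k_s+k\lambda)\nu$ gives the last one via the kernel argument you describe. Note that the paper does not actually prove this lemma here; it is quoted among the ``results which have been proven in~\cite{mannovtor}'', and your write--up recovers precisely the computation carried out there (Lemma~2.5 / relations~(2.9)--(2.11) in~\cite{mannovtor}). The only caveat you already flagged---fixing the cyclic orientation so that $\tau^{i+1}={\mathrm R}_{120}\tau^i$ rather than ${\mathrm R}_{-120}\tau^i$---is exactly the convention implicit in the paper's choice $\nu^i={\mathrm R}\tau^i$, so the signs come out as stated.
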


The key theorem for the analysis of the singularities is the following
result~\cite[Theorem~3.18]{mannovtor}.

\begin{prop}\label{curvexplod}
If $[0,T)$ is the maximal time interval of existence of a smooth
solution $\TTT_t$ with $T<+\infty$ of Problem~\eqref{problema},
then one of the following possibilities holds:
\begin{itemize}
\item the inferior limit of the length of one curve of $\TTT_t$ tends to zero as $t\to T$,
\item $\limsup_{t\to T}\int_{\TTT_t}k^2\,ds=+\infty$.
\end{itemize}
Moreover, if the lengths of the three curves are uniformly
bounded away from zero, then the superior limit is actually a limit.
\end{prop}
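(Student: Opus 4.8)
The plan is to prove the contrapositive by a continuation argument: I would show that if \emph{both} alternatives fail, then the flow can be extended smoothly beyond $T$, contradicting the maximality of the interval $[0,T)$. Thus I would assume $T<+\infty$, that there is $\delta>0$ with $L^i(t)\ge\delta$ for all $t\in[0,T)$ and $i\in\{1,2,3\}$, and that $C_0:=\sup_{t\in[0,T)}\int_{\TTT_t}k^2\,ds<+\infty$. The aim is then to derive bounds, uniform in $t\in[0,T)$, on $k$ and all of its arclength derivatives; from these the curves should converge in $C^\infty$ to a smooth nondegenerate limit triod $\TTT_T$, and re-solving Problem~\eqref{problema} with initial datum $\TTT_T$ would continue the flow past $T$, the desired contradiction.

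The engine would be a hierarchy of integral estimates for the quantities $\int_{\TTT_t}(\partial_s^m k)^2\,ds$, $m\ge0$. Differentiating in time, and using $\partial_t(ds)=(\lambda_s-k^2)\,ds$ together with the commutation rule of Lemma~\ref{commute} and the evolution equations of Lemma~\ref{evoluzioni}, I expect each of these to satisfy a differential identity of the form
\[
\frac{d}{dt}\int_{\TTT_t}(\partial_s^m k)^2\,ds
= -2\int_{\TTT_t}(\partial_s^{m+1}k)^2\,ds
+\int_{\TTT_t} P_m\,ds + B_m,
\]
where $P_m$ is a scaling--homogeneous polynomial in $k$ and its arclength derivatives up to order $m$, and $B_m$ collects the boundary contributions produced by the integrations by parts. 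The interior term I would control by Gagliardo--Nirenberg interpolation on curves of length $\ge\delta$, absorbing the top-order part of $\int P_m$ into the good term $-2\int(\partial_s^{m+1}k)^2\,ds$ via Young's inequality and leaving a remainder bounded by $C_0$ and lower-order norms. For $m=0$ this would use $\int k^4\,ds\le\|k\|_\infty^2\int k^2\,ds$ and the Agmon-type inequality $\|k\|_\infty^2\le C(\|k\|_2\|\partial_s k\|_2+\delta^{-1}\|k\|_2^2)$, which already closes the estimate and bounds both $\int k^2\,ds$ and $\int_0^T\!\!\int k_s^2\,ds\,dt$; one then inducts on $m$.

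The hard part, and the step where the triod structure is indispensable, is the control of the boundary terms $B_m$. At the fixed endpoints $x=1$ the condition $\gamma^i(1,t)=P^i$ together with $\gamma^i_t=\lambda\tau+k\nu$ forces $\lambda^i(1,t)=k^i(1,t)=0$ for every $t$; differentiating these identities in time and using Lemma~\ref{evoluzioni} should yield the vanishing of $k$ and of its relevant arclength derivatives at $x=1$ (the compatibility conditions), killing the endpoint contributions. At the 3--point the individual terms do not vanish, but I would sum over the three curves and invoke the compatibility relations recorded above --- $\sum_i k^i=\sum_i\lambda^i=0$, the $120$-degree (Herring) condition, and the fact that $k_s^i+\lambda^i k^i$ is independent of $i$ --- together with their time-differentiated consequences, to reduce each $B_m$ to a polynomial in the junction values of $k$ and its derivatives, which is then estimated by interpolation and absorbed as before. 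This cancellation structure at a single junction is precisely what fails for a network with several triple junctions, and I expect it to be the main obstacle.

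Once the induction bounds every $\int_{\TTT_t}(\partial_s^m k)^2\,ds$ uniformly on $[0,T)$, Sobolev embedding on curves of length $\ge\delta$ would give uniform $L^\infty$ bounds on $k$, on all its derivatives, and hence on $\lambda$ and the junction velocity, yielding the $C^\infty$ convergence $\TTT_t\to\TTT_T$ and the contradiction, which proves the dichotomy. For the final (``moreover'') assertion I would retain only the crude consequence of the $m=0$ computation, a differential inequality $\tfrac{d}{dt}\int_{\TTT_t}k^2\,ds\le C\bigl(1+\int_{\TTT_t}k^2\,ds\bigr)^3$ valid as long as the lengths stay $\ge\delta$. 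If $\liminf_{t\to T}\int_{\TTT_t}k^2\,ds$ were finite, I would pick $t_n\to T$ with $\int_{\TTT_{t_n}}k^2\,ds\le M$ and compare with the solution of $g'=C(1+g)^3$, $g(t_n)=M$, whose existence time is bounded below in terms of $M$ alone; for $n$ large this would bound $\int k^2\,ds$ on all of $[t_n,T)$, contradicting $\limsup_{t\to T}\int k^2\,ds=+\infty$. Hence the superior limit is in fact a limit, equal to $+\infty$.
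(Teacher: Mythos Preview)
The paper does not give its own proof of this proposition; it is quoted verbatim as \cite[Proposition~3.13]{mannovtor}. Your continuation strategy---assume both alternatives fail, derive uniform bounds on $\int_{\TTT_t}(\partial_s^m k)^2\,ds$ for every $m$ via the energy hierarchy, pass to a smooth limit triod at $t=T$, and restart the flow---is precisely the argument carried out in~\cite{mannovtor}, and your identification of the two boundary mechanisms (the vanishing $k=\lambda=0$ at the fixed endpoints and the summed compatibility relations $\sum_i k^i=0$, $k_s^i+\lambda^ik^i$ independent of $i$, at the 3--point) is correct. The $m=0$ step you describe is exactly the computation reproduced here in Lemma~\ref{kkevol}, where the 3--point contribution $\sum_i k^i(k_s^i+\lambda^ik^i)$ vanishes identically; for $m\ge1$ the 3--point terms do not vanish but, as you say, reduce to lower--order polynomials in the junction values that are absorbed by interpolation. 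Your ODE--comparison argument for the ``moreover'' clause (a differential inequality $\tfrac{d}{dt}\int k^2\,ds\le C(1+\int k^2\,ds)^3$ together with a finite $\liminf$ forcing a uniform bound on a terminal interval) is again the argument of~\cite{mannovtor}, and the needed inequality is the content of Lemma~\ref{kkevol} once the endpoint terms are set to zero.

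One inaccuracy in your aside: the junction cancellations you exploit are \emph{local} and work independently at each triple point, so Proposition~\ref{curvexplod} does extend to networks with several triple junctions. The genuine obstruction for multiple junctions in this paper is elsewhere---it is the multiplicity--one/embeddedness control of Proposition~\ref{dlteo} and Lemma~\ref{Elemma1}, which (as the closing Remark and the Note following it explain) only survives for at most two triple junctions.
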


In the next section we will show that if the lengths of the three curves are uniformly
bounded away from zero, no singularity can develop. We
now introduce the tools and the estimates which we will need.

Let $F : \TT\times [0,T)\to\mathbb{R}^2$ a curvature flow for a
triod in its maximal time interval of existence, then a 
modified form of Huisken's monotonicity formula holds.\\
Let $x_0\in\Om$ and $\rho_{x_0}:\R^2\times[0,T)$ be the {\em backward
  heat kernel} of $\R^2$ relative to $(x_0,T)$, that is
$$
\rho_{x_0}(x,t)=\frac{e^{-\frac{\vert
    x-x_0\vert^2}{4(T-t)}}}{\sqrt{4\pi(T-t)}}\,.
$$

\begin{prop}[Monotonicity Formula -- Proposition~6.4 in~\cite{mannovtor}]
\label{promono}
For every $x_0\in\R^2$ and $t\in [0,T)$ the following identity holds
\begin{align}
\frac{d\,}{dt}\int_{{\TT_t}} \rho_{x_0}(x,t)\,ds=
&\,-\int_{{\TT_t}} \left\vert\,\underline{k}+\frac{(x-x_0)^{\perp}}{2(T-t)}\right\vert^2
\rho_{x_0}(x,t)\,ds\label{eqmonfor}\\
&\,+\sum_{i=1}^3\left\langle\,\frac{P^i-x_0}{2(T-t)} ,\tau^i(1,t)\right\rangle
\rho_{x_0}(P^i,t)\,.\nonumber
\end{align}
Integrating between $t_1$ and $t_2$ with $0\leq t_1\leq t_2<T$ we get
\begin{align}
\int_{t_1}^{t_2}\int_{{\TT_t}}
\left\vert\,\underline{k}+\frac{(x-x_0)^{\perp}}{2(T-t)}\right\vert^2
\rho_{x_0}(x,t)\,ds\,dt = 
&\,\int_{{\TT_{t_1}}} \rho_{x_0}(x,t_1)\,ds -
\int_{{\TT_{t_2}}} \rho_{x_0}(x,t_2)\,ds\nonumber\\
&\,+\sum_{i=1}^3\int_{t_1}^{t_2}
\left\langle\,\frac{P^i-x_0}{2(T-t)} , \tau^i(1,t)\right\rangle
\rho_{x_0}(P^i,t)\,dt\,.\nonumber
\end{align}
\end{prop}

\begin{Remark}Notice that the monotonicity formula for a triod differs
  from the standard one because of a boundary term. Thanks to the next
  lemma, this extra term will not change to a big extent the blow--up
  analysis for the curvature motion of triods.
\end{Remark}

\begin{lemma}[Lemma~6.5 in~\cite{mannovtor}]\label{stimadib} 
Setting $\vert P^i- x_0\vert=d^i$, 
for every index $i\in\{1,2,3\}$ the following estimate holds 
\begin{equation*}
\left\vert\int_{t}^{T}\left\langle\,\frac{P^i-x_0}{2(T-\xi)} , 
\tau^i(1,\xi)\right\rangle\rho_{x_0}(P^i,\xi)\,d\xi\,\right\vert\leq
\frac{1}{\sqrt{2\pi}}\int\limits_{d^i/\sqrt{2(T-t)}}^{+\infty}e^{-y^2/2}\,dy\leq
1/2\,.
\end{equation*}
As a consequence, for every point $x_0\in\R^2$, we have
\begin{equation*}
\lim_{t\to T}\sum_{i=1}^3\int_{t}^{T}
\left\langle\,\frac{P^i-x_0}{2(T-\xi)} , \tau^i(1,\xi)\right\rangle
\rho_{x_0}(P^i,\xi)\,d\xi=0\,.
\end{equation*}
\end{lemma}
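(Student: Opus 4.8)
The plan is to reduce both inequalities to a single Gaussian tail estimate by an explicit change of variables, and then read off the limit.

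First I would estimate the integrand pointwise in $\xi$. Since $\tau^i(1,\xi)$ is a unit vector, the Cauchy--Schwarz inequality gives $\bigl|\langle \frac{P^i-x_0}{2(T-\xi)},\tau^i(1,\xi)\rangle\bigr|\le \frac{d^i}{2(T-\xi)}$. Inserting the explicit form of the backward heat kernel, $\rho_{x_0}(P^i,\xi)=\frac{1}{\sqrt{4\pi(T-\xi)}}\,e^{-(d^i)^2/(4(T-\xi))}$, the absolute value of the integrand is bounded above by $\frac{d^i}{4\sqrt{\pi}}\,(T-\xi)^{-3/2}\,e^{-(d^i)^2/(4(T-\xi))}$.

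Next I would substitute $y=d^i/\sqrt{2(T-\xi)}$. As $\xi$ runs over $[t,T)$ the variable $y$ increases monotonically from $d^i/\sqrt{2(T-t)}$ to $+\infty$, and one checks the two identities $\tfrac{y^2}{2}=\tfrac{(d^i)^2}{4(T-\xi)}$ and $\frac{d^i}{(T-\xi)^{3/2}}\,d\xi=2\sqrt{2}\,dy$. These turn the pointwise bound precisely into $\frac{1}{\sqrt{2\pi}}\,e^{-y^2/2}\,dy$, so integrating over $[t,T)$ yields exactly the first asserted inequality. The second inequality follows at once: the lower endpoint $d^i/\sqrt{2(T-t)}$ is nonnegative, and $\frac{1}{\sqrt{2\pi}}\int_0^{+\infty}e^{-y^2/2}\,dy=\tfrac12$.

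For the consequence I would let $t\to T$. If $d^i>0$ the lower limit of integration tends to $+\infty$, hence the Gaussian tail, and with it the $i$--th term, tends to zero. If instead $d^i=0$ (which can happen for at most one index, since the endpoints $P^i$ are distinct), the factor $P^i-x_0$ vanishes, so the integrand is identically zero and the term is zero for every $t$. In either case each of the three summands tends to zero, and since the sum is finite the whole expression tends to zero. I do not expect a genuine obstacle here: once the substitution $y=d^i/\sqrt{2(T-\xi)}$ is chosen, the first inequality is in fact an identity, and the remaining claims are immediate. The only point deserving a comment is the degenerate case $d^i=0$, where the change of variables breaks down but the integrand vanishes identically, so both inequalities and the stated limit continue to hold.
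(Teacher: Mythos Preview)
Your argument is correct and is exactly the natural proof: bound the inner product by Cauchy--Schwarz, insert the explicit kernel, and substitute $y=d^i/\sqrt{2(T-\xi)}$ to land on the Gaussian tail; the degenerate case $d^i=0$ is handled separately, as you note. The present paper does not prove this lemma but quotes it from~\cite{mannovtor}; your computation is the standard one and matches what is done there. One minor remark on wording: the first displayed inequality is not literally ``an identity'' --- the Cauchy--Schwarz step is a genuine inequality --- but it is true that after that step the change of variables introduces no further loss, which is presumably what you meant.
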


\begin{prop}
If for every $x_0\in\R^2$ we define the functions $\Theta:\TT\times[0,T)\to\R$ as
$$
\Theta(x_0,t)=\int_{\TTT_t}\rho_{x_0}(x,t)\,ds\,,
$$
then, the limit
$$
\widehat{\Theta}(x_0)=\lim_{t\to T}\Theta(x_0,t)=\lim_{t\to T}\int_{\TTT_t}\rho_{x_0}(x,t)\,ds
$$
exists and it is finite.\\
Moreover, the map $\widehat{\Theta}:\mathbb{R}^2\to\mathbb{R}$ is upper
semicontinuous.
\end{prop}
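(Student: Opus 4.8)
The plan is to mimic the classical argument for the existence of the Gaussian density in mean curvature flow, but carefully tracking the boundary contribution coming from the triple junction and the fixed endpoints. The key point is that the monotonicity formula of Proposition~6.4 in \cite{mannovtor} does not give outright monotonicity of $t\mapsto\Theta(x_0,t)$ because of the boundary term $\sum_{i=1}^3\langle (P^i-x_0)/(2(T-t)),\tau^i(1,t)\rangle\rho_{x_0}(P^i,t)$, but Lemma~\ref{stimadib} shows that this term is integrable in time near $T$ with uniformly bounded integral. Hence I would introduce the corrected quantity
\begin{equation*}
\widetilde{\Theta}(x_0,t)=\Theta(x_0,t)-\sum_{i=1}^3\int_{t}^{T}\left\langle\,\frac{P^i-x_0}{2(T-\xi)},\tau^i(1,\xi)\right\rangle\rho_{x_0}(P^i,\xi)\,d\xi\,,
\end{equation*}
and observe that, by \eqref{eqmonfor} and the fundamental theorem of calculus, $\frac{d}{dt}\widetilde{\Theta}(x_0,t)=-\int_{\TT_t}\bigl|\underline{k}+\frac{(x-x_0)^\perp}{2(T-t)}\bigr|^2\rho_{x_0}\,ds\le 0$, so $\widetilde{\Theta}(x_0,\cdot)$ is genuinely nonincreasing on $[0,T)$.

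Next I would argue that $\widetilde{\Theta}(x_0,\cdot)$ is bounded below, hence converges as $t\to T$. For a lower bound it suffices to bound $\Theta(x_0,t)$ below (the integral correction is controlled by $1/2$ per index by Lemma~\ref{stimadib}, hence by $3/2$ in absolute value). A lower bound on $\Theta(x_0,t)=\int_{\TT_t}\rho_{x_0}(x,t)\,ds$ follows from the fact that the evolving triod always connects the three fixed endpoints on $\partial\Omega$, so it has diameter bounded below by a fixed positive constant; restricting the Gaussian integral to a fixed-size piece of the network passing near $x_0$ (or simply using that $\TT_t$ is connected and nonempty, together with a standard comparison of the Gaussian integral over a connected curve through or near a ball), one gets $\Theta(x_0,t)\ge c>0$ uniformly, or at worst $\Theta(x_0,t)\ge 0$ trivially, which is already enough for boundedness from below. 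Combining, $\widetilde{\Theta}(x_0,t)$ is nonincreasing and bounded below, so $\lim_{t\to T}\widetilde{\Theta}(x_0,t)$ exists and is finite; since by Lemma~\ref{stimadib} the correction term tends to $0$ as $t\to T$, the limit $\widehat{\Theta}(x_0)=\lim_{t\to T}\Theta(x_0,t)$ exists and is finite as well, with $\widehat\Theta(x_0)=\lim_{t\to T}\widetilde\Theta(x_0,t)$.

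For the upper semicontinuity of $x_0\mapsto\widehat{\Theta}(x_0)$, I would use that for each fixed $t<T$ the function $x_0\mapsto\widetilde{\Theta}(x_0,t)$ is continuous on $\R^2$: the map $x_0\mapsto\Theta(x_0,t)=\int_{\TT_t}\rho_{x_0}(x,t)\,ds$ is continuous (indeed smooth) by differentiation under the integral sign on the compact network $\TT_t$, and the correction term $\sum_i\int_t^T\langle (P^i-x_0)/(2(T-\xi)),\tau^i(1,\xi)\rangle\rho_{x_0}(P^i,\xi)\,d\xi$ is continuous in $x_0$ by dominated convergence using the uniform bound from Lemma~\ref{stimadib}. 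Then $\widehat\Theta$ is the infimum over $t\in[0,T)$ of the continuous functions $x_0\mapsto\widetilde\Theta(x_0,t)$ (using monotonicity in $t$, so $\widehat\Theta(x_0)=\inf_{t<T}\widetilde\Theta(x_0,t)=\lim_{t\to T}\widetilde\Theta(x_0,t)$), and an infimum of continuous functions is upper semicontinuous.

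The main obstacle I anticipate is the careful justification of the lower bound and of the convergence of the boundary correction term near $t=T$, in particular making sure that writing $\widetilde\Theta$ as a difference is legitimate — i.e. that $\int_t^T$ of the boundary term is finite — which is precisely what Lemma~\ref{stimadib} provides, so the argument reduces to assembling these pieces. A secondary subtlety is that $\widehat\Theta$ has a priori been defined only via the limit, so one must check the identity $\widehat\Theta(x_0)=\inf_{t<T}\widetilde\Theta(x_0,t)$ before invoking the u.s.c.\ of an infimum; this follows immediately once monotonicity of $\widetilde\Theta(x_0,\cdot)$ and $\lim_{t\to T}$(correction)$=0$ are established.
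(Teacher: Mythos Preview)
Your approach is essentially the same as the paper's: define a corrected Gaussian density by absorbing the time--integral of the boundary term, show this corrected quantity is genuinely monotone nonincreasing and bounded below (using $\Theta\ge 0$ and the uniform bound of Lemma~\ref{stimadib}), conclude it has a finite limit, and then use that the correction tends to zero as $t\to T$ to get the limit of $\Theta$. The upper semicontinuity argument via ``infimum of continuous functions'' is also exactly what the paper does.

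There is, however, a sign slip in your definition of $\widetilde\Theta$. With
\[
b(x_0,t)=\sum_{i=1}^3\int_t^T\Bigl\langle\frac{P^i-x_0}{2(T-\xi)},\tau^i(1,\xi)\Bigr\rangle\rho_{x_0}(P^i,\xi)\,d\xi,
\]
one has $\frac{d}{dt}b(x_0,t)=-\sum_i\langle\ldots\rangle\big|_{\xi=t}$, and combining with~\eqref{eqmonfor} gives $\frac{d}{dt}\bigl(\Theta(x_0,t)+b(x_0,t)\bigr)=-\int_{\TT_t}|\underline{k}+(x-x_0)^\perp/2(T-t)|^2\rho_{x_0}\,ds\le 0$. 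Your $\widetilde\Theta=\Theta-b$ instead satisfies $\frac{d}{dt}\widetilde\Theta=-\int|\ldots|^2+2\sum_i\langle\ldots\rangle$, which is not nonpositive. Replace the minus by a plus (i.e.\ use $\Theta+b$, as the paper does) and everything you wrote goes through verbatim. The rest of your write--up, including the slightly over--elaborate discussion of the lower bound, is fine; the trivial bound $\Theta\ge 0$ together with $|b|\le 3/2$ already suffices.
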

\begin{proof}
We consider the function $b:\R^2\times [0,T)\to\R$ given by
$$
b(x_0,t)=\int_t^T\sum_{i=1}^3
\left\langle\,\frac{P^i-x_0}{2(T-\xi)}\,\biggl\vert\,\tau^i(1,\xi)\right\rangle
\rho_{x_0}(P^i,\xi)\,d\xi\,.
$$
Lemma~\ref{stimadib} says that $b$ is uniformly bounded and for every
$x_0\in\R^2$ we have $\lim_{t\to T}b(x_0,t)=0$. Hence, the monotonicity
formula~\eqref{eqmonfor} can be rewritten as
$$
\frac{d\,}{dt}\bigl(\Theta(x_0,t)+b(x_0,t)\bigr)=-\int_{{\TTT_t}}
\left\vert\,\underline{k}+\frac{(x-x_0\,)^{\perp}}{2(T-t)}\right\vert^2
\rho_{x_0}(x,t)\,ds\leq 0\,,
$$
hence, being nonincreasing and bounded from below, the
functions $\bigl(\Theta(\cdot , t)+b(\cdot , t)\bigr)$ pointwise converge on all
$\R^2$ when $t\to T$. Since we have seen that $b(\cdot , t)$ 
pointwise converge to zero everywhere, the limit $\widehat{\Theta}(x_0)$
exists for every $x_0\in\R^2$.\\
As the convergence of the continuous functions $\bigl(\Theta(\cdot ,
t)+b(\cdot , t)\bigr)$ to $\widehat{\Theta}:\R^2\to\R$ is monotone
nonincreasing, this latter is upper semicontinuous.
\end{proof}

We now introduce the rescaling procedure of
Huisken~\cite{huisk3}.\\
For a fixed $x_0\in\R^2$, let $\widetilde{F}_{x_0}:\TTT\times
[-1/2\log{T},+\infty)\to\R^2$ be the map
$$
\widetilde{F}_{x_0}(p,\tt)=\frac{F(p,t(\tt))-x_0}{\sqrt{2(T-t(\tt))}}\qquad
\tt(t)=-\frac{1}{2}\log{(T-t)} \,.
$$
Then the rescaled triods are given by 
$$
\widetilde{\TTT}_{x_0,\tt}=\frac{\TTT_{t(\tt)}-x_0}{\sqrt{2(T-t(\tt))}}
$$
and they evolve according to the equation 
$$
\frac{\partial\,}{\partial
  \tt}\widetilde{F}_{x_0}(p,\tt)=\widetilde{\underline{v}}(p,\tt)+\widetilde{F}_{x_0}(p,\tt) \,,
$$
where 
$$
\widetilde{\underline{v}}(p,\tt)=\frac{\underline{v}(p,t(\tt))}{\sqrt{2(T-t(\tt))}}=
\widetilde{\underline{k}}+\widetilde{\underline{\lambda}}=
\widetilde{k}\nu+\widetilde{\lambda}\tau\qquad \text{ and }\qquad
t(\tt)=T-e^{-2\tt}\,.
$$
Notice that we did not put the ``tilde'' over the unit tangent and
normal, since they do not change under rescaling.\\
We will often write $\widetilde{O}(\tt)=\widetilde{F}_{x_0}(0,\tt)$ 
for the 3--point of the rescaled triod $\widetilde{\TTT}_{x_0,\tt}$,
when there is no ambiguity on the point $x_0$.\\
The rescaled curvature evolves according to the following equation
\begin{equation*}
{\partial_\tt} \widetilde{k}=
\widetilde{k}_{\sigma\sigma}+\widetilde{k}_\sigma\widetilde{\lambda} +
\widetilde{k}^3 -\widetilde{k} \,,
\end{equation*}
which can be obtained by means of the
commutation law
\begin{equation*}
{\partial_\tt}{\partial_\sigma}={\partial_\sigma}{\partial_\tt} + (\widetilde{k}^2
-\widetilde{\lambda}_\sigma-1){\partial_\sigma}\,,
\end{equation*}
where we denoted with $\sigma$ the arclength parameter for
$\widetilde{\TTT}_{x_0,\tt}$.

By a straightforward computation
(see~\cite{huisk3} and~\cite[Lemma~6.7]{mannovtor}) we have the following
rescaled version of the monotonicity formula.

\begin{prop}[Rescaled Monotonicity Formula]
Let $x_0\in\R^2$ and set  
$$
\widetilde{\rho}(x)=e^{-\frac{\vert x\vert^2}{2}}
$$
For every $\tt\in[-1/2\log{T},+\infty)$ the following identity holds
\begin{equation*}
\frac{d\,}{d\tt}\int_{\widetilde{\TTT}_{x_0,\tt}}
\widetilde{\rho}(x)\,d\sigma=
-\int_{\widetilde{\TTT}_{x_0,\tt}}\vert
\,\widetilde{\underline{k}}+x^\perp\vert^2\widetilde{\rho}(x)\,d\sigma
+\sum_{i=1}^3\left\langle\,{\widetilde{P}^i_{x_0,\tt}}
\,\Bigl\vert\,{\tau}^i(1,t(\tt))\right\rangle
\widetilde{\rho}(\widetilde{P}^i_{x_0,\tt})
\end{equation*}
where $\widetilde{P}^i_{x_0,\tt}=\frac{P^i-x_0}{\sqrt{2(T-t(\tt))}}$.\\  
Integrating between $\tt_1$ and $\tt_2$ with 
$-1/2\log{T}\leq \tt_1\leq \tt_2<+\infty$ we get
\begin{align} 
\int_{\tt_1}^{\tt_2}\int_{\widetilde{\TTT}_{x_0,\tt}}\vert
\,\widetilde{\underline{k}}+x^\perp\vert^2\widetilde{\rho}(x)\,d\sigma\,d\tt=
&\, \int_{\widetilde{\TTT}_{x_0,\tt_1}}\widetilde{\rho}(x)\,d\sigma - 
\int_{\widetilde{\TTT}_{x_0,\tt_2}}\widetilde{\rho}(x)\,d\sigma\label{reseqmonfor-int}\\
&\,+\sum_{i=1}^3\int_{\tt_1}^{\tt_2}\left\langle\,{\widetilde{P}^i_{x_0,\tt}}\,
\Bigl\vert\,{\tau}^i(1,t(\tt))\right
\rangle\widetilde{\rho}(\widetilde{P}^i_{x_0,\tt})\,d\tt\,.\nonumber
\end{align}
\end{prop}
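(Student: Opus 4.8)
The plan is to obtain the rescaled identity by a pure change of variables from the un--rescaled Monotonicity Formula~\eqref{eqmonfor}, rather than re--deriving it from scratch. This is convenient because the delicate boundary analysis at the 3--point and at the fixed endpoints has already been carried out in~\eqref{eqmonfor}, where the contributions at the 3--point cancel by the Herring condition and only the endpoint terms survive; re--doing it after rescaling would be wasteful.

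First I would record how each object transforms under the parabolic rescaling. Abbreviate $\beta=\beta(t)=\sqrt{2(T-t)}$, so that the rescaled point is $y=(x-x_0)/\beta$ and, from $T-t=e^{-2\tt}$, one has $dt/d\tt=2(T-t)=\beta^2$. The spatial dilation by the factor $\beta^{-1}$ scales lengths by $\beta^{-1}$, so that $ds=\beta\,d\sigma$; correspondingly the curvature vector, being dimensionally an inverse length with $\nu$ invariant, scales by $\beta$, giving $\underline{k}=\beta^{-1}\widetilde{\underline{k}}$; and the normal component of position scales as $(x-x_0)^\perp=\beta\,y^\perp$. For the kernel, since $|x-x_0|^2/4(T-t)=|y|^2/2$ and $\sqrt{4\pi(T-t)}=\sqrt{2\pi}\,\beta$, we get $\rho_{x_0}(x,t)=\widetilde{\rho}(y)/(\sqrt{2\pi}\,\beta)$; combined with $ds=\beta\,d\sigma$ this yields, at each time, the clean relation $\int_{\TT_t}\rho_{x_0}\,ds=\frac{1}{\sqrt{2\pi}}\int_{\widetilde{\TTT}_{x_0,\tt}}\widetilde{\rho}\,d\sigma$.

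Next I would convert the time derivative and substitute. Differentiating the last relation in $\tt$ and using $dt/d\tt=\beta^2$ gives $\frac{d}{d\tt}\int\widetilde{\rho}\,d\sigma=\sqrt{2\pi}\,\beta^{2}\,\frac{d}{dt}\int_{\TT_t}\rho_{x_0}\,ds$, into which I insert~\eqref{eqmonfor}. For the first term on the right of~\eqref{eqmonfor} the transformation rules give $\underline{k}+\frac{(x-x_0)^\perp}{2(T-t)}=\beta^{-1}(\widetilde{\underline{k}}+y^\perp)$, whose square carries a factor $\beta^{-2}$, while $\rho_{x_0}\,ds=\frac{1}{\sqrt{2\pi}}\widetilde{\rho}\,d\sigma$; together with the prefactor $\sqrt{2\pi}\,\beta^{2}$ every power of $\beta$ cancels and this term becomes exactly $-\int|\widetilde{\underline{k}}+y^\perp|^2\widetilde{\rho}\,d\sigma$. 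For the boundary term, $\frac{P^i-x_0}{2(T-t)}=\beta^{-1}\widetilde{P}^i_{x_0,\tt}$ and $\rho_{x_0}(P^i,t)=\widetilde{\rho}(\widetilde{P}^i_{x_0,\tt})/(\sqrt{2\pi}\,\beta)$, so again the prefactor $\sqrt{2\pi}\,\beta^{2}$ absorbs all powers of $\beta$ and leaves $\sum_{i=1}^3\langle\,\widetilde{P}^i_{x_0,\tt}\,\vert\,\tau^i(1,t)\rangle\,\widetilde{\rho}(\widetilde{P}^i_{x_0,\tt})$. Renaming $y$ as $x$ yields the asserted differential identity.

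The only genuine care required is the bookkeeping of the factor $\beta$: the content of the proposition is precisely the observation that the powers of $\beta$ coming from the Jacobian $ds/d\sigma$, the kernel normalization, the dilation of $\underline{k}$ and of the normal position, and the chain--rule factor $dt/d\tt$ all cancel exactly, leaving a formula with no residual time--dependent prefactor and with the constant $1/\sqrt{2\pi}$ factoring harmlessly out of both sides; thus there is no deep obstacle, only the risk of mislabelling one scaling exponent. Finally, the integrated identity over $[\tt_1,\tt_2]$ follows by integrating the differential one and applying the fundamental theorem of calculus to the left side $\frac{d}{d\tt}\int\widetilde{\rho}\,d\sigma$, which produces the difference $\int_{\widetilde{\TTT}_{x_0,\tt_1}}\widetilde{\rho}\,d\sigma-\int_{\widetilde{\TTT}_{x_0,\tt_2}}\widetilde{\rho}\,d\sigma$ together with the time integral of the boundary sum.
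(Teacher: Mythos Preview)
Your argument is correct and is exactly the ``straightforward computation'' the paper alludes to (it does not give a proof, only citing~\cite{huisk3} and~\cite[Lemma~6.7]{mannovtor}): you carry out the parabolic change of variables from the un--rescaled identity~\eqref{eqmonfor}, and your bookkeeping of the powers of $\beta=\sqrt{2(T-t)}$ in each factor is accurate. The integrated version then follows as you say.
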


Then, we have the analog of Lemma~\ref{stimadib}.

\begin{lemma}[Lemma~6.8 in~\cite{mannovtor}]\label{rescstimadib} 
For every index $i\in\{1,2,3\}$ 
the following estimate holds 
\begin{equation*}
\left\vert\int_{\tt}^{+\infty}\left\langle\,{\widetilde{P}^i_{x_0,\xi}}\,
\Bigl\vert\,{\tau}^i(1,t(\xi))\right
\rangle\widetilde{\rho}(\widetilde{P}^i_{x_0,\xi})\,d\xi\right\vert\leq \sqrt{\pi/2}\,.
\end{equation*}
Then, for every $x_0\in\R^2$,
\begin{equation*}
\lim_{\tt\to  +\infty}\sum_{i=1}^3\int_{\tt}^{+\infty}\left\langle\,{\widetilde{P}^i_{x_0,\xi}}\,
\Bigl\vert\,{\tau}^i(1,t(\xi))\right
\rangle\widetilde{\rho}(\widetilde{P}^i_{x_0,\xi})\,d\xi=0\,.
\end{equation*}
\end{lemma}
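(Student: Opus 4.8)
\textbf{Proof plan for Lemma~\ref{rescstimadib}.}
The plan is to derive both statements directly from the explicit form of the rescaled kernel $\widetilde\rho$ and of the rescaled endpoints $\widetilde P^i_{x_0,\xi}=(P^i-x_0)/\sqrt{2(T-t(\xi))}$, reducing everything to a one--variable Gaussian estimate exactly as in the un--rescaled Lemma~\ref{stimadib}. First I would change variables from $\xi$ back to $t=t(\xi)=T-e^{-2\xi}$, so that $dt=2e^{-2\xi}\,d\xi=2(T-t)\,d\xi$, and recall the basic relations $\widetilde\rho(\widetilde P^i_{x_0,\xi})=e^{-|P^i-x_0|^2/(4(T-t))}$ and, with the notation $d^i=|P^i-x_0|$, the identity $\sqrt{4\pi(T-t)}\,\rho_{x_0}(P^i,t)=e^{-(d^i)^2/(4(T-t))}=\widetilde\rho(\widetilde P^i_{x_0,\xi})$. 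A short computation then shows that the rescaled boundary integrand equals a constant multiple of the un--rescaled one up to the factor $\sqrt{T-t}$, so that
\begin{equation*}
\left\langle\,\widetilde P^i_{x_0,\xi}\,\Bigl\vert\,\tau^i(1,t(\xi))\right\rangle\widetilde\rho(\widetilde P^i_{x_0,\xi})\,d\xi
=\sqrt{2\pi}\left\langle\,\frac{P^i-x_0}{2(T-t)}\,\Bigl\vert\,\tau^i(1,t)\right\rangle\rho_{x_0}(P^i,t)\,\sqrt{T-t}\cdot\frac{dt}{2(T-t)}\cdot\frac{1}{\sqrt{T-t}},
\end{equation*}
up to bookkeeping of constants; the point is simply that the change of variables converts the $\tt$--integral into a $t$--integral of the same shape treated in Lemma~\ref{stimadib}.

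Once this reduction is in place, the first estimate follows by the Cauchy--Schwarz bound $|\langle\widetilde P^i_{x_0,\xi}\,|\,\tau^i\rangle|\le|\widetilde P^i_{x_0,\xi}|=d^i/\sqrt{2(T-t)}$ and the substitution $y=d^i/\sqrt{2(T-t)}$ (equivalently $y=d^i\,e^{\xi}/\sqrt 2\cdot\sqrt{\cdots}$), under which
\begin{equation*}
\int_{\tt}^{+\infty}\frac{d^i}{\sqrt{2(T-t(\xi))}}\,e^{-\frac{(d^i)^2}{4(T-t(\xi))}}\,d\xi
=\int_{d^i/\sqrt{2(T-t(\tt))}}^{+\infty}e^{-y^2/2}\,dy\le\int_{0}^{+\infty}e^{-y^2/2}\,dy=\sqrt{\pi/2}\,,
\end{equation*}
which is precisely the claimed bound. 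This is really the same computation as in Lemma~\ref{stimadib}, only the constant $1/2$ there (coming from $\frac{1}{\sqrt{2\pi}}\int_0^\infty e^{-y^2/2}\,dy$) is replaced by $\sqrt{\pi/2}$ here because the rescaled kernel carries no normalising factor $1/\sqrt{2\pi}$.

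For the second statement, I would observe that since $t(\tt)\to T$ as $\tt\to+\infty$, the lower limit of integration $d^i/\sqrt{2(T-t(\tt))}$ in the displayed Gaussian integral tends to $+\infty$ whenever $d^i=|P^i-x_0|>0$, so the tail $\int_{d^i/\sqrt{2(T-t(\tt))}}^{+\infty}e^{-y^2/2}\,dy\to 0$; summing over $i$ gives the claim at every $x_0$ that differs from all three endpoints $P^i$. For the (at most three) exceptional points $x_0=P^j$, the corresponding term has $d^j=0$ and must be handled separately: there one uses instead that $\tau^j(1,t)$ is a unit vector while $\widetilde P^j_{x_0,\xi}=0$, so that term vanishes identically, and the remaining two terms have $d^i>0$ and go to zero as before. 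The only mild subtlety, and the step I would be most careful about, is the change of variables and the resulting identification of the rescaled boundary integrand with the un--rescaled one — keeping the powers of $\sqrt{T-t}$ and the factors of $\sqrt{2\pi}$ straight — together with the observation that $\tau^i(1,t)$ stays bounded (it is a unit vector) so that no regularity of the flow up to $t=T$ is needed for this estimate. Everything else is an elementary Gaussian tail estimate.
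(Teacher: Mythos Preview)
Your proposal is correct. The paper does not actually give a proof of this lemma --- it is simply quoted as Lemma~6.8 of~\cite{mannovtor} --- so there is nothing here to compare against directly, but your argument is exactly the natural one and matches what is implicit in the relation to Lemma~\ref{stimadib}. The clean core of your proof is the substitution $y=d^i/\sqrt{2(T-t(\xi))}=\tfrac{d^i}{\sqrt2}\,e^{\xi}$, under which $d\xi=dy/y$ and the bound $\bigl|\langle\widetilde P^i_{x_0,\xi}\,|\,\tau^i\rangle\bigr|\le y$ turns the integral into $\int_{y(\tt)}^{\infty}e^{-y^2/2}\,dy\le\sqrt{\pi/2}$; the limit statement then follows from $y(\tt)\to+\infty$ when $d^i>0$, and from $\widetilde P^j_{x_0,\xi}\equiv 0$ when $x_0=P^j$. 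Your first displayed formula attempting to match the rescaled and unrescaled integrands term by term is unnecessary and a bit muddled (you yourself flag it as ``up to bookkeeping of constants''); I would drop it and go straight to the substitution, which is self-contained and makes the factor $\sqrt{\pi/2}$ versus $1/2$ transparent: the rescaled kernel $\widetilde\rho$ lacks the normalisation $1/\sqrt{2\pi}$ present in $\rho_{x_0}$.
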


Before showing the key proposition about the blow-up limits of the
flow at a singularity, we need some technical lemmas.
 
\begin{lemma}[Second statement in Lemma~6.10 in~\cite{mannovtor}]\label{rescestim}
For every ball $B_R$ centered at the
origin of $\R^2$, we have the following
estimates with a constant $C_R$ independent of 
$x_0\in\R^2$ and $\tt\in[-1/2\log{T},+\infty)$
$$
\HH^1({\widetilde{\TTT}_{x_0,\tt}}\cap B_R)\leq C_R\,.
$$
\end{lemma}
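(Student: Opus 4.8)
The plan is to derive the estimate from a uniform bound on the Gaussian--weighted length of the rescaled triods, which itself follows from the monotonicity formula together with the boundary--term estimates already at our disposal, and then to restrict to $B_R$ trivially.

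\emph{Step 1 (a scaling identity).} For every $x_0\in\R^2$ and every $\tt$, I would change variables $y=(x-x_0)/\sqrt{2(T-t(\tt))}$ in the integral defining $\Theta\bigl(x_0,t(\tt)\bigr)$: since a point $y$ of $\widetilde{\TTT}_{x_0,\tt}$ corresponds to $x=x_0+\sqrt{2(T-t(\tt))}\,y$ on $\TTT_{t(\tt)}$, one has $d\sigma=ds/\sqrt{2(T-t(\tt))}$ and $|x-x_0|^2/4(T-t(\tt))=|y|^2/2$, whence
\begin{equation*}
\int_{\widetilde{\TTT}_{x_0,\tt}}\widetilde{\rho}(y)\,d\sigma=\sqrt{2\pi}\,\Theta\bigl(x_0,t(\tt)\bigr)\,.
\end{equation*}
Thus it suffices to bound $\Theta(x_0,t)$ by a constant independent of $x_0\in\R^2$ and of $t\in[0,T)$.

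\emph{Step 2 (uniform bound on $\Theta$).} I would apply the integrated form~\eqref{eqmonfor} of the monotonicity formula between $t_1=0$ and $t_2=t$, discard the nonpositive dissipation term, and estimate the boundary contribution $\sum_i\int_0^t\langle\cdots\rangle\rho_{x_0}(P^i,\cdot)$ via Lemma~\ref{stimadib} (writing $\int_0^t=\int_0^T-\int_t^T$), obtaining
\begin{equation*}
\Theta(x_0,t)\leq\Theta(x_0,0)+3=\int_{\TTT_0}\frac{e^{-|x-x_0|^2/4T}}{\sqrt{4\pi T}}\,ds+3\leq\frac{\HH^1(\TTT_0)}{\sqrt{4\pi T}}+3=:C\,,
\end{equation*}
which is independent of $x_0$ and $t$. (Equivalently one may argue directly with the rescaled monotonicity formula~\eqref{reseqmonfor-int} and Lemma~\ref{rescstimadib}, starting at the initial time $\tt_1=-\frac12\log T$, at which $\int_{\widetilde{\TTT}_{x_0,\tt_1}}\widetilde{\rho}\,d\sigma=\sqrt{2\pi}\,\Theta(x_0,0)$ is bounded as above.)

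\emph{Step 3 (localization) and the main obstacle.} On $B_R$ we have $\widetilde{\rho}(y)=e^{-|y|^2/2}\geq e^{-R^2/2}$, so combining Steps 1--2,
\begin{equation*}
\HH^1(\widetilde{\TTT}_{x_0,\tt}\cap B_R)\leq e^{R^2/2}\int_{\widetilde{\TTT}_{x_0,\tt}\cap B_R}\widetilde{\rho}\,d\sigma\leq e^{R^2/2}\int_{\widetilde{\TTT}_{x_0,\tt}}\widetilde{\rho}\,d\sigma\leq\sqrt{2\pi}\,C\,e^{R^2/2}=:C_R\,,
\end{equation*}
uniformly in $x_0\in\R^2$ and $\tt$, which is the assertion. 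The only point requiring care is Step 2: one must verify that every constant entering the bound on $\Theta$ — the length $\HH^1(\TTT_0)$ of the initial triod, the (fixed) maximal time $T$, and the boundary term controlled by Lemma~\ref{stimadib} — is genuinely independent of the blow--up centre $x_0$, which is exactly what those estimates provide; note that no bound on the curvature is invoked anywhere.
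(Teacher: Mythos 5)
Your proof is correct and follows essentially the route behind the cited Lemma~6.10 of~\cite{mannovtor} (the present paper does not reprove the statement): a uniform bound on the Gaussian density $\Theta(x_0,t)$ obtained from the monotonicity formula after discarding the nonpositive dissipation term and controlling the endpoint contribution via Lemma~\ref{stimadib}, followed by the localization $\widetilde{\rho}\geq e^{-R^2/2}$ on $B_R$. All the constants you use, namely $\HH^1(\TTT_0)$, the fixed time $T$ and the bound $1/2$ per endpoint, are indeed independent of $x_0$ and $\tt$, so the argument is complete (your $C_R\sim e^{R^2/2}$ is cruder than the linear-in-$R$ bound one can get by recentering the heat kernel at scale $R$, but the lemma only asks for some $C_R$).
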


\begin{dfnz} We say that a sequence of triods converges in the
  $C^r\loc$ topology if, after reparametrizing all their curves with
  the arclength, they converge in $C^r$ on every compact set of $\R^2$.\\
The definition of convergence in $W^{n,p}\loc$ is analogous.
\end{dfnz}

Given the smooth flow $\TTT_t=F(\TTT,t)$, we consider two points $p=F(x,t)$ and
$q=F(y,t)$ belonging to $\TTT_t$ 
and we define $\Gamma_{p,q}$ to be the {\em geodesic}
curve contained in $\TTT_t$ connecting $p$ and $q$. Then we let 
$A_{p,q}$ to be the area of the open region ${\mathcal A}_{p,q}$ in $\R^2$ enclosed
by the segment  $[p,q]$ and the curve $\Gamma_{p,q}$, as in the figure.

\begin{figure}[h]
\begin{center}
\input{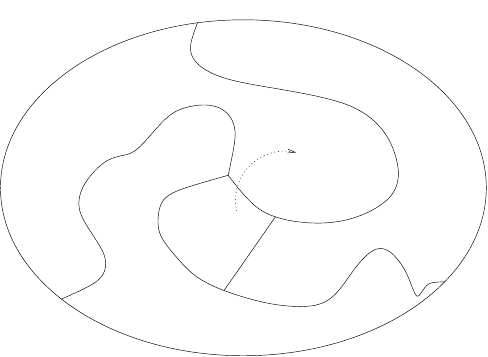_t}
\end{center}
\end{figure}

If the region ${\mathcal A}_{p,q}$ is not connected, we let
$A_{p,q}$ to be the sum of the areas of its connected components.

We consider the function 
$\Phi_t: \TTT\times\TTT\to \R\cup\{+\infty\}$ as 
\begin{equation*}
\Phi_t(x,y)=
\begin{cases}
\frac{\vert p-q\vert^2}{A_{p,q}}\qquad &\text{ { if} $x\not=y$,}\\
4\sqrt{3}\qquad  & \text{ { if} $x$ and $y$ coincide with the 3--point
  $O$ of $\TTT$,}\\
+\infty\qquad  & \text{ { if}  $x=y\not=O$}
\end{cases}
\end{equation*}
\\
Since $\TTT_t$ is smooth and the $120$ degrees condition holds, it is
easy to check that $\Phi_t$ is a lower semicontinuous function. Hence,
by the compactness of $\TTT$, the following infimum is actually a
minimum
\begin{equation*}
E(t) = \inf_{x,y\in\TTT}\Phi_t(x,y)
\end{equation*}
for every $t\in[0,T)$.\\

If the triod $\TTT_t$ has no self--intersections we have
$E(t)>0$, the converse is clearly also true.\\
Moreover, $E(t)\leq\Phi_t(0,0)=4\sqrt{3}$ always holds, thus when
$E(t)>0$ the two points $(p,q)$ of a minimizing pair $(x,y)$ can coincide if
and only if $p=q=O$.\\
Eventually, since the evolution is smooth, it is easy to see that the
function $E:[0,T)\to\R$ is continuous.

\begin{prop}[Theorem~4.6 in~\cite{mannovtor}]\label{dlteo} If $\Omega$
  is strictly convex, 
there exists  a constant $C>0$ depending only on $\TTT_0$ 
such that $E(t)>C>0$ for every $t\in[0,T)$.\\
Hence, the triods $\TTT_t$ remain embedded in all the maximal interval of 
existence of the flow.
\end{prop}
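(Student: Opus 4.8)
The plan is to show that the continuous function $E(t)$ satisfies a differential inequality of the form $E'(t) \geq -C\,E(t)$ whenever $E(t)$ is small, or more simply that $E$ cannot decrease to zero because at a would‑be minimizing configuration the time derivative of $\Phi_t$ along a minimizing pair is controlled. First I would fix $t_0 \in [0,T)$ and let $(x,y)$ be a pair realizing the minimum $E(t_0) = \Phi_{t_0}(x,y)$; by the remarks preceding the statement, either $p = F(x,t_0) \neq q = F(y,t_0)$, or $p=q=O$ with $\Phi_{t_0}=4\sqrt 3$, and in the latter case there is nothing to prove since $4\sqrt 3$ is an absolute lower bound there. So assume $p \neq q$ and $E(t_0) < 4\sqrt3$, whence the minimum is attained at an interior critical point of $\Phi$ and the first–variation (stationarity) conditions hold: the chord $[p,q]$ meets the curves at $p$ and at $q$ making equal angles on the two sides (reflecting the criticality in $x$ and in $y$ separately, and at the 3–point using the $120^\circ$ condition). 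These stationarity relations are exactly what one needs to simplify $\frac{d}{dt}\Phi_t$.

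Next I would compute $\frac{d}{dt}\Phi_t(x,y) = \frac{d}{dt}\frac{|p-q|^2}{A_{p,q}}$ along the flow, using $\dot p = \underline v(x,t) = \lambda^{(p)}\tau + k^{(p)}\nu$ and similarly for $\dot q$, together with the transport formula for the enclosed area $A_{p,q}$: the rate of change of the area is the flux of the normal velocity across the boundary $\Gamma_{p,q}$ plus the contribution from the moving endpoints $p,q$ sliding along the chord. Concretely, $\frac{d}{dt}A_{p,q} = -\int_{\Gamma_{p,q}} k\,ds + (\text{endpoint terms})$, where $\int_{\Gamma_{p,q}} k\,ds$ is the total turning, which by the Gauss–Bonnet/turning‑angle argument is expressible in terms of the exterior angles at $p$, at $q$, and at the 3–point $O$ if it lies on $\Gamma_{p,q}$. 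Using the stationarity conditions to kill the endpoint terms, and using the strict convexity of $\Omega$ to control the boundary curves $\sigma^i(\cdot)$ hitting $\partial\Omega$ (so that an endpoint $P^i$ of the triod never creates an uncontrolled contribution — in fact the chord argument only ever involves interior arcs), one arrives at an estimate of the shape
\[
\frac{d}{dt}\log\Phi_t(x,y) \;\geq\; -C
\]
for a constant $C$ depending only on $\TTT_0$ (through a lower bound on the total length $L$, which is nonincreasing under the flow, and on $\mathrm{diam}\,\Omega$). Since $E$ is continuous and, by a standard Hamilton‑type argument, its lower Dini derivative is bounded below by $\frac{d}{dt}\Phi_t$ evaluated at a minimizing pair, this yields $E(t) \geq E(0)\,e^{-Ct} \geq E(0)\,e^{-CT}$; if $T=+\infty$ one needs the sharper inequality $\frac{d}{dt}\log\Phi_t \geq c(\Phi_t - 4\sqrt3)$ or an argument using that equality $\Phi_t = 4\sqrt3$ is the round/flat model, forcing $E(t) \geq C>0$ uniformly. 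Either way $E$ stays bounded away from $0$, and since $E(t)>0$ is equivalent to embeddedness, the triod stays embedded.

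The main obstacle I expect is twofold. The first and most delicate point is the handling of the \emph{area} term $A_{p,q}$ when the enclosed region ${\mathcal A}_{p,q}$ is disconnected or when the geodesic $\Gamma_{p,q}$ passes through the 3–point $O$: one must check that the $120^\circ$ condition contributes the right angle defect so that the computation of $\frac{d}{dt}A_{p,q}$ closes up, and this is precisely why the normalization $\Phi_t(O,O)=4\sqrt3$ is chosen. The second obstacle is making the \emph{strict} convexity of $\Omega$ do real work: it is needed to guarantee that a minimizing chord $[p,q]$ cannot degenerate by having $p$ or $q$ run off to the fixed endpoints $P^i \in \partial\Omega$ in a way that sends $A_{p,q}\to 0$ faster than $|p-q|^2$, i.e.\ one must rule out "boundary" self‑tangencies; strict convexity ensures the boundary curves $\sigma^i$ leave $\partial\Omega$ transversally inward and gives a quantitative lower bound preventing this. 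Once these two points are settled, the differential inequality and hence the uniform lower bound on $E(t)$ follow by a routine ODE comparison.
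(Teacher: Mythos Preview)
This proposition is not proved in the present paper; it is quoted from~\cite{mannovtor}. Your plan follows essentially the same route as that reference (which adapts Hamilton's isoperimetric estimate and Huisken's distance--comparison principle to the triod setting): take a minimizing pair, use the equal--angle first--variation conditions, compute the time derivative of $|p-q|^2/A_{p,q}$, and express $\frac{d}{dt}A_{p,q}$ through the total curvature of $\Gamma_{p,q}$ via Gauss--Bonnet.

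The gap is that you settle for $\frac{d}{dt}\log\Phi_t \geq -C$, whereas the actual computation yields $\frac{d}{dt}\Phi_t \geq 0$ at a minimizing pair with $\Phi_t < 4\sqrt{3}$: the angle conditions coming from stationarity match the turning--angle contributions from Gauss--Bonnet (including the $\pi/3$ defect when $\Gamma_{p,q}$ passes through the 3--point, which is precisely why the threshold is $4\sqrt{3}$) so that the terms cancel \emph{exactly}, not merely up to a bounded error. This is what the introduction of the present paper means when it says the embeddedness measure is ``monotonically increasing for an evolving triod''; one gets $E(t)\geq E(0)$ directly, with no case distinction on $T$.

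Your weaker inequality only yields $E(t)\geq E(0)e^{-CT}$, which says nothing when $T=+\infty$. Your proposed patch, $\frac{d}{dt}\log\Phi_t \geq c(\Phi_t - 4\sqrt{3})$, does not help either: since $\Phi_t < 4\sqrt{3}$ the right--hand side is negative, so this is still a decay estimate. The resolution is not a different inequality but the sharper one; carry the computation through and the $-C$ disappears. Your identification of the two delicate spots --- the 3--point contribution to the area derivative and the role of strict convexity in handling minimizing pairs that reach the fixed endpoints $P^i$ --- is accurate and is indeed where the work in~\cite{mannovtor} is concentrated.
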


\begin{lemma}\label{Elemma1}
If $\Omega$ is strictly convex, the function
\begin{equation*}
E(\TTT)= \inf_{\genfrac{}{}{0pt}{}
{p,q\in\TTT}{p\not=q}}\frac{\vert p-q\vert^2}{A_{p,q}}\,,
\end{equation*}
defined on the class of $C^1$ triods without self--intersections 
(bounded or unbounded and with or without end points or 3--points), is upper
semicontinuous with respect to the $C^1\loc$ convergence.\\
Moreover, $E$ is {\em dilation and translation invariant}.
Consequently, every $C^1\loc$ limit ${\TTT}_\infty$ of a sequence of rescaled triods
$\widetilde{\TTT}_{\tt}$ has no self--intersections,
has multiplicity one (outside the endpoints and the 3--point if
present), and satisfies $E(\TTT_\infty)>C>0$ where the 
constant $C$ is given by Proposition~\ref{dlteo}.
\end{lemma}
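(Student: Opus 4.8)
The plan is to prove Lemma~\ref{Elemma1} in three parts, mirroring its three assertions.

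\textbf{Step 1: Scaling and translation invariance.} This is the easy part and I would dispatch it first. If $\TTT' = a\TTT + b$ is a dilation by $a>0$ followed by a translation by $b$, then for corresponding points $p',q'$ one has $|p'-q'|^2 = a^2|p-q|^2$ and the enclosed area scales as $A_{p',q'} = a^2 A_{p,q}$ (area in $\R^2$ picks up $a^2$), while translations change neither quantity. Hence $\Phi$ is invariant point-by-point and so is the infimum $E$. This is exactly what makes $E$ a legitimate obstruction to pass to the blow-up limit.

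\textbf{Step 2: Upper semicontinuity under $C^1\loc$ convergence.} Here I would argue by contradiction or, more cleanly, directly: fix a limit triod $\TTT_\infty$ and $\e>0$; choose $p\ne q$ in $\TTT_\infty$ with $\frac{|p-q|^2}{A_{p,q}} < E(\TTT_\infty)+\e$ (or, if $E(\TTT_\infty)=+\infty$, there is nothing to prove — note a triod with no self-intersections can still have $E=+\infty$ only if it is a single arc, but in our setting $\TTT_\infty$ has a 3-point or is a full triod, so $E<+\infty$; in any case the argument below only needs an approximating pair). Since the curves converge in $C^1$ on compact sets after arclength reparametrization, I can pick points $p_n, q_n$ on $\widetilde\TTT_{\tt_n}$ with $p_n\to p$, $q_n\to q$; then $|p_n-q_n|^2 \to |p-q|^2$, and — this is the one point needing care — $\liminf_n A_{p_n,q_n} \le A_{p,q}$, because the geodesic arcs $\Gamma_{p_n,q_n}\subset\widetilde\TTT_{\tt_n}$ converge in $C^1$ to $\Gamma_{p,q}$ on the relevant compact set, so the enclosed regions converge in measure and the areas pass to the limit (using $C^1$ rather than merely $C^0$ convergence rules out wild oscillations that could inflate the area, and handles the case where $\Gamma_{p,q}$ passes through the 3-point). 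Therefore $\limsup_n E(\widetilde\TTT_{\tt_n}) \le \limsup_n \frac{|p_n-q_n|^2}{A_{p_n,q_n}} \le \frac{|p-q|^2}{A_{p,q}} < E(\TTT_\infty)+\e$, and letting $\e\to0$ gives upper semicontinuity. I expect the \emph{area convergence} $A_{p_n,q_n}\to A_{p,q}$ to be the main obstacle: one must check it even when the region ${\mathcal A}_{p,q}$ is disconnected, when $p$ or $q$ is the 3-point, and when a connected component degenerates; the $C^1\loc$ control plus the uniform length bound of Lemma~\ref{rescestim} on every ball is what makes it go through, since it confines everything to a bounded region of controlled total length.

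\textbf{Step 3: Consequence for blow-up limits.} By Proposition~\ref{dlteo} the rescaled triods $\widetilde\TTT_{x_0,\tt}$ are obtained from $\TTT_{t(\tt)}$ by the dilation/translation $x\mapsto (x-x_0)/\sqrt{2(T-t(\tt))}$, so by Step~1 we have $E(\widetilde\TTT_{x_0,\tt}) = E(\TTT_{t(\tt)}) > C > 0$ uniformly in $\tt$, with $C$ the constant of Proposition~\ref{dlteo} depending only on $\TTT_0$. Then Step~2 gives, for any $C^1\loc$ limit $\TTT_\infty$ of a sequence $\widetilde\TTT_{\tt_n}$, the bound $E(\TTT_\infty) \ge \limsup_n E(\widetilde\TTT_{\tt_n}) \ge C > 0$. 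In particular $E(\TTT_\infty)>0$ forces $|p-q|>0$ whenever $A_{p,q}=0$, i.e. there are no two distinct points joined by a zero-area geodesic arc — this rules out self-intersections and also rules out the limit being a curve covered with multiplicity $\ge 2$ (two overlapping sheets would give pairs $p\ne q$ with $A_{p,q}=0$, whence $E=0$), so $\TTT_\infty$ has multiplicity one away from the endpoints and 3-point. This completes the proof.
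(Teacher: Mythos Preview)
Your Steps~1 and~2 follow the paper's approach (which simply calls these ``straightforward, by the $C^1\loc$ convergence''), though note a sign slip in Step~2: to bound the ratio from above you need $\liminf_n A_{p_n,q_n} \ge A_{p,q}$, not $\le$. The first half of Step~3 --- transferring the uniform bound $E(\widetilde\TTT_{\tt})\ge C$ from Proposition~\ref{dlteo} via dilation/translation invariance --- is also the paper's route.

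The genuine gap is in your multiplicity argument. You write that ``two overlapping sheets would give pairs $p\ne q$ with $A_{p,q}=0$, whence $E=0$'', but this does not follow: if two sheets of the limit coincide along an arc, then for $p,q$ on the two sheets with the same image in $\R^2$ one has $|p-q|^2=0$ \emph{and} $A_{p,q}=0$ (the geodesic and the degenerate segment collapse together), so the ratio is the indeterminate form $0/0$, not~$0$. More fundamentally, $E$ is by hypothesis only defined on \emph{embedded} triods, so appealing to ``$E(\TTT_\infty)>0$'' to rule out a non-embedded $\TTT_\infty$ is circular unless you first extend the definition of $E$ to possibly-collapsed parametrized objects and re-prove upper semicontinuity in that larger class --- which is not automatic.

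The paper sidesteps this by working directly on the \emph{approximating sequence}, where embeddedness is already known. If the limit had a tangential multiplicity point $x$, then in a small ball $B_R(x)$ the rescaled triods $\widetilde\TTT_{\tt_j}$ would contain (at least) two disjoint $C^1$ graphs over the common tangent line $L$, both converging to the same limit graph. Taking $p_j,q_j$ to be the intersections of the two graphs with the line through $x$ orthogonal to $L$, one has $d_j:=|p_j-q_j|\to 0$, while $A_{p_j,q_j}\ge S_j$, the area trapped between the two graphs in $B_R(x)$. Now rescale the ball by the factor $1/d_j$: the two graphs converge to two \emph{distinct} parallel lines at distance~$1$, so the rescaled area $S_j/d_j^2\to+\infty$. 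Since $E$ is dilation invariant, $E(\widetilde\TTT_{\tt_j})\le d_j^2/S_j\to 0$, contradicting the uniform lower bound. This second rescaling step is the missing idea in your argument.
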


\begin{proof}
The dilation--translation invariance and the upper semicontinuity
of the function $E$ are straightforward, by
the $C^1\loc$ convergence. This latter obviously implies the final statement
of the theorem once we show the embeddedness and the multiplicity
one properties.

Suppose that a sequence of rescaled triods
$\widetilde{\TTT}_{\tt_j}$ converges to some limit $\TTT_\infty$. If
this latter has a transversal self--intersection, the triods of the
approximating sequence must definitively have self--intersections too,
but this contradicts Proposition~\ref{dlteo}. By the same reason,
$\TTT_\infty$ cannot contain loops 
and a self--intersection at the 3--point is impossible. 
Similarly, we can exclude a self--intersection at an endpoint.
Indeed, $\TTT_\infty$ can contain an endpoint $P^i_\infty$ only if we rescale 
the evolving triods around one of the endpoints $P^i$, hence
the convexity of $\Omega$ implies that
$\TTT_\infty$ lies in a halfspace and the
only way a self--intersection at $P^i_\infty$ can appear  
is that the limit curve starting at $P^i_\infty$ is tangent to another limit
piece of a curve of $\TTT_\infty$. 
As we assumed that the lengths of the curves are uniformly 
bounded below away from zero, then either $\TTT_\infty$ contains a loop (as it is connected), 
or a piece of the limit triod containing $P^i_\infty$ has multiplicity two,
coming from the ``collapsing'' of two pieces of curve in the sequence of rescaled triods. 
We saw that the first case is impossible, then the
second one is excluded by the argument below, which deals with the 
multiplicity of the limit set.\\
The only other possible self--intersections of the limit set can happen
at self--tangency points. 
By the $C^1\loc$ convergence, in a sufficiently small ball of radius $R$ around any
of such points $x\in\R^2$, definitively, for every rescaled triod
$\widetilde{\TTT}_{\tt_j}$, there 
must be some number of curves which are ``pieces'' of
$\widetilde{\TTT}_{\tt_j}$, such that 
they are all disjoint, all graphs on the tangent
line $L$ to $\TTT_\infty$ at $x$ and all converging to the same limit
$C^1$ graph $\TTT_\infty\cap B_R$. Considering two of such pieces of
curves, say $\sigma^1_j$ and $\sigma^2_j$, we 
take the point $p_j$ and $q_j$ which are the intersections of the
orthogonal line to $L$ at $x$ and the two curves. By hypotheses,
the distance $d_j$ between $p_j$ and $q_j$ goes to zero. Moreover, as every
rescaled triod is connected there must be a geodesic curve (in the entire rescaled triod) connecting
such two points. This means that the open region ${\mathcal A}_{p_j,q_j}$ is well defined and its area $A_{p_j,q_j}$ is larger than the area $S_j$ contained between the two curves--graphs $\sigma^1_j$ and
$\sigma^2_j$ in $B_R$. Hence we get $E(\widetilde{\TTT}_{\tt_j})\leq d_j^2/S_j$. If we now rescale the ball $B_R$ by a factor $1/d_j$, 
the two curves $\sigma^1_j,\,\sigma^2_j$ 
converge, as $j\to\infty$, to two straight lines
parallel to $L$. As the distance between the rescaled of the points
$p_j$, $q_j$ is one, the distance between 
this two straight lines is also one. 
As the function $E$ is dilation invariant
and the rescaling of the region between the
two curves in $B_R$ converges to a half--strip in $\R^2$, we conclude that 
$\lim_{j\to\infty} S_j/d_j^2=+\infty$, hence
$E(\widetilde{\TTT}_{\tt_j})\to 0$ which is a contradiction, by
Proposition~\ref{dlteo}.
\end{proof}

We can now show the following result, which is analogous to the
(stronger) one for Type I singularities proved
in~\cite[Proposition~6.16]{mannovtor}.

\begin{prop}\label{resclimit}
Assume that the lengths of the three curves of the triods $\TT_t$ are
uniformly bounded away from zero during the evolution.

For every $x_0\in\R^2$ and every subset $\mathcal I$ of  $[-1/2\log
T,+\infty)$ with infinite Lebesgue measure, 
there exists a sequence of rescaled times $\tt_j\to+\infty$, with $\tt_j\in{\mathcal I}$, such that the sequence of rescaled triods
$\widetilde{\TTT}_{x_0,\tt_{j}}$ converges in the $C^1\loc$ topology
to a limit set 
$\TTT_\infty$ which, {\em if not empty}, is one of the following:
\begin{itemize}
\item a straight line through the origin with multiplicity one (in this case $\widehat\Theta(x_0)=1$);
\item an infinite flat triod centered at the origin with multiplicity
  one, except its 3--point (in this case $\widehat\Theta(x_0)=3/2$);
\item a halfline from the origin of multiplicity one, except the origin (in this case $\widehat\Theta(x_0)=1/2$).
\end{itemize}
Moreover, the $L^2$ norm of the curvature in every ball $B_R\in\R^2$
along such sequence goes to zero, as $j\to\infty$.

For every sequence of rescaled triods $\TTT_{x_0,\tt_j}$ converging at
least in the $C^1\loc$ topology to a limit $\widetilde{\TTT}_\infty$,
as $\tt_j\to+\infty$, we have
\begin{equation}\label{gggg}
\lim_{j\to\infty}\frac{1}{\sqrt{2\pi}}\int_{\widetilde{\TTT}_{x_0,\tt_j}}\widetilde{\rho}\,d\sigma=\frac{1}{\sqrt{2\pi}}
\int_{\TTT_\infty}\widetilde{\rho}\,d\sigma=\widehat{\Theta}(x_0)\,.
\end{equation}
\end{prop}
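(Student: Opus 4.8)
The plan is to prove Proposition~\ref{resclimit} in two parts. For the first part, I would combine the rescaled monotonicity formula \eqref{reseqmonfor-int} with Lemma~\ref{rescstimadib} (which controls the boundary term) to conclude that, for any $x_0$, the quantity
\[
\widetilde\Theta(x_0,\tt)=\int_{\widetilde{\TTT}_{x_0,\tt}}\widetilde\rho\,d\sigma + \text{(bounded boundary correction)}
\]
is monotone nonincreasing in $\tt$ and converges, as $\tt\to+\infty$, to $\widehat\Theta(x_0)$ (after identifying the rescaled Gaussian integral with the parabolic one via the change of variables already set up). In particular the integrand of the left-hand side of \eqref{reseqmonfor-int} is integrable in $\tt$ over $[-1/2\log T,+\infty)$; since $\mathcal I$ has infinite measure this forces a sequence $\tt_j\to+\infty$, $\tt_j\in\mathcal I$, along which $\int_{\widetilde{\TTT}_{x_0,\tt_j}}|\widetilde{\underline k}+x^\perp|^2\widetilde\rho\,d\sigma\to0$. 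Along such a sequence I would then invoke the uniform length bound from Lemma~\ref{rescestim}, the curvature and higher-derivative estimates from \cite{mannovtor}, and the interior regularity estimates of Ecker--Huisken \cite{eckhui1}, to extract a subsequence converging in $C^1\loc$ (indeed in $C^\infty\loc$ away from the 3--point/endpoints) to a limit triod $\TTT_\infty$. By Lemma~\ref{Elemma1} this limit has no self--intersections and multiplicity one. The vanishing of the rescaled "shrinker quantity" forces $\underline k_\infty+x^\perp=0$ along $\TTT_\infty$; the classification of such self--similar limits (a minimal cone structure combined with the one-dimensional shrinker ODE and the $120^\circ$ condition at the origin) yields exactly the three listed possibilities — straight line, flat unbounded triod, halfline — the second and third being genuinely "flat" (i.e. $k\equiv0$) because the noncompact ends must be asymptotic to rays through the origin and the only bounded self-shrinking curves through finite geometry here degenerate. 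The $\widehat\Theta(x_0)$ values $1,\,3/2,\,1/2$ then follow by directly evaluating $\frac1{\sqrt{2\pi}}\int\widetilde\rho\,d\sigma$ on a line, a triple of halflines at $120^\circ$, and a single halfline through the origin, using $\frac1{\sqrt{2\pi}}\int_{\R}e^{-x^2/2}\,dx=1$ and $\frac1{\sqrt{2\pi}}\int_0^\infty e^{-x^2/2}\,dx=1/2$. The $L^2$-curvature statement is immediate: $\int_{B_R}k^2\,d\sigma\le\int_{\widetilde{\TTT}_{x_0,\tt_j}}|\widetilde{\underline k}+x^\perp|^2\widetilde\rho\,d\sigma\cdot e^{R^2/2}+ C\int_{B_R}|x|^2\,d\sigma$ — one must be a little careful, but since along the subsequence the limit is flat, the curvature itself (not just $\underline k+x^\perp$) tends to zero locally in $L^2$.

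For the final displayed identity \eqref{gggg}, the key point is to justify passing the rescaled Gaussian integral to the limit under $C^1\loc$ convergence alone. First I would observe that, by the argument just given, $\int_{\widetilde{\TTT}_{x_0,\tt}}\widetilde\rho\,d\sigma$ converges (as a full limit, not a subsequential one) to $\widehat\Theta(x_0)$ as $\tt\to+\infty$ — this is where the monotonicity plus Lemma~\ref{rescstimadib} is essential, since it upgrades subsequential convergence to convergence. Hence if \emph{any} subsequence $\TTT_{x_0,\tt_j}$ converges in $C^1\loc$ to $\widetilde{\TTT}_\infty$, it suffices to show $\int_{\widetilde{\TTT}_{x_0,\tt_j}}\widetilde\rho\,d\sigma\to\int_{\TTT_\infty}\widetilde\rho\,d\sigma$. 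For this I split $\R^2 = B_R\cup(\R^2\setminus B_R)$: on $B_R$ the $C^1\loc$ convergence (together with the uniform length bound of Lemma~\ref{rescestim} ensuring no length is lost to "folding") gives $\int_{\widetilde{\TTT}_{x_0,\tt_j}\cap B_R}\widetilde\rho\,d\sigma\to\int_{\TTT_\infty\cap B_R}\widetilde\rho\,d\sigma$; on the complement, the bound $\HH^1(\widetilde{\TTT}_{x_0,\tt_j}\cap(B_{2^{k+1}R}\setminus B_{2^kR}))\le C_{2^{k+1}R}$ from Lemma~\ref{rescestim}, combined with the Gaussian decay $\widetilde\rho(x)\le e^{-2^{2k-1}R^2}$ there, gives a tail estimate uniform in $j$ that goes to $0$ as $R\to\infty$ — and the same bound applies to $\TTT_\infty$ by lower semicontinuity of Hausdorff measure under $C^1\loc$ limits. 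Letting $R\to\infty$ after $j\to\infty$ concludes the identity.

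The main obstacle I anticipate is twofold. First, in the classification of self-similar limits one must rule out non-flat possibilities (e.g. an Abresch--Langer-type shrinking curve, or a shrinker with a genuine bend at the origin consistent with the $120^\circ$ angle condition): here the crucial input is that the three lengths stay uniformly bounded below, so after rescaling the three ends become unbounded and the only self-shrinking \emph{complete} ends through the origin with controlled geometry are rays; combined with the fact that the 3--point condition plus $\underline k + x^\perp=0$ forces the bounded portion to collapse, this pins down flatness, but writing this cleanly requires care. Second, and more delicately, the identity \eqref{gggg} is claimed for \emph{any} $C^1\loc$-convergent sequence, not merely the distinguished one built from the monotonicity argument; the subtlety is that $C^1\loc$ convergence a priori allows mass to escape to infinity, and one genuinely needs the uniform area ratio bound of Lemma~\ref{rescestim} — which is what prevents this — together with the fact that the full limit of $\int\widetilde\rho\,d\sigma$ exists, to force all such sequential limits to carry the same Gaussian mass $\widehat\Theta(x_0)$. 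Everything else (the monotonicity, the regularity estimates, the explicit Gaussian integrals) is either quoted from the excerpt or a routine computation.
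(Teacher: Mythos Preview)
Your overall strategy is the same as the paper's: use the rescaled monotonicity formula together with Lemma~\ref{rescstimadib} to extract, within any set $\mathcal I$ of infinite measure, a sequence along which $\int_{\widetilde{\TTT}_{x_0,\tt_j}}|\widetilde{\underline k}+x^\perp|^2\widetilde\rho\,d\sigma\to 0$; pass to a $C^1\loc$ limit; invoke Lemma~\ref{Elemma1} for embeddedness and multiplicity one; deduce $\underline k_\infty+x^\perp=0$ by lower semicontinuity; classify; and read off the Gaussian densities. Your argument for~\eqref{gggg} via a compact/tail splitting with Lemma~\ref{rescestim} is exactly what underlies the paper's citation of~\cite[Proposition~6.20]{mannovtor}.

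There is one imprecision worth flagging in your compactness step. You propose to ``invoke \dots\ the curvature and higher-derivative estimates from~\cite{mannovtor}, and the interior regularity estimates of Ecker--Huisken~\cite{eckhui1}'' to obtain $C^1\loc$ (even $C^\infty\loc$) convergence. Neither is available at this stage: the pointwise curvature estimates of~\cite{mannovtor} may blow up as $t\to T$, and Ecker--Huisken requires the evolving curve to be a graph, which is precisely what you are trying to establish. The paper instead argues directly that the vanishing of the shrinker integrand gives a uniform $L^2(B_R)$ bound on $\widetilde k$; together with the length bound from Lemma~\ref{rescestim} and arclength reparametrization, this yields $W^{2,2}\loc$ compactness (citing~\cite{huisk3,langer2}), hence $C^1\loc$ subsequential convergence. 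Smoothness of the limit then comes \emph{a posteriori} by bootstrapping the relation $\underline k_\infty=-x^\perp$. So drop the appeal to Ecker--Huisken here and use the soft $W^{2,2}\loc$ argument; only $C^1\loc$ convergence is actually needed (and claimed).

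For the classification step you correctly anticipate the difficulty; the paper does not argue it from scratch but cites Lemmas~5.2--5.4 and Proposition~5.5 of~\cite{mannovtor}, together with the observation that if $x_0=P^i$ the blow--up lies in a halfplane (by convexity of $\Omega$), which excludes a 3--point and an endpoint coexisting.
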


\begin{proof}
Assume that we have a sequence of rescaled triods
$\widetilde{\TTT}_{x_0,\tt_j}$ converging in the $C^1\loc$ topology to
a limit ${\TTT}_\infty$, as $\tt_j\to+\infty$. Since by Lemma~\ref{Elemma1}
the limit must be embedded with multiplicity one, the convergence on
every compact subset of $\R^2$ implies that the Radon measures $\HH^1\res
\widetilde{\TTT}_{x_0,\tt_{j_l}}$ weakly$^{{\displaystyle{\star}}}$--converge in $\R^2$  to the Radon measure
$\HH^1\res\TTT_\infty$. Moreover, as in the proof of Proposition~6.20
in~\cite{mannovtor}, 
we can pass to the limit in the following Gaussian integral:
$$
\lim_{j\to\infty}\frac{1}{\sqrt{2\pi}}\int_{\widetilde{\TTT}_{x_0,\tt_j}}\widetilde{\rho}\,d\sigma=\frac{1}{\sqrt{2\pi}}
\int_{\TTT_\infty}\widetilde{\rho}\,d\sigma\,.
$$
Consequently, 
$$
\frac{1}{\sqrt{2\pi}}\int_{\widetilde{\TTT}_{x_0,\tt_j}}\widetilde{\rho}\,d\sigma
=\int_{\TTT_{t(\tt_j)}}\rho_{x_0}(x,t(\tt_j))\,ds
=\Theta(x_0,t(\tt_j))\to\widehat{\Theta}(x_0)\,,
$$
as $j\to\infty$ and equality~\eqref{gggg} follows.

We now show the first statement.\\
Setting $\tt_1=-1/2\log T$ and letting $\tt_2$ go to $+\infty$ in 
the rescaled monotonicity formula~\ref{reseqmonfor-int},  by
Lemma~\ref{rescstimadib} we get
$$
\int\limits_{-1/2\log{T}}^{+\infty}\int\limits_{\widetilde{\TTT}_{x_0,\tt}}
\vert\,\widetilde{\underline{k}}+x^\perp\vert^2\widetilde{\rho}\,d\sigma\,d\tt<+\infty\,,
$$
then, {\em a fortiori},
$$
\int\limits_{{\mathcal{I}}}\int\limits_{\widetilde{\TTT}_{x_0,\tt}}
\vert\,\widetilde{\underline{k}}+x^\perp\vert^2\widetilde{\rho}\,d\sigma\,d{\tt}<+\infty\,.
$$
Being the last integral finite and being the integrand a nonnegative
function on a set of infinite Lebesgue measure, we can extract within ${\mathcal I}$ a
sequence of times $\tt_{j}\to+\infty$, such that 
$$\lim_{j\to +\infty}\int\limits_{\widetilde{\TTT}_{x_0,\tt_j}}
\vert\,\widetilde{\underline{k}}+x^\perp\vert^2\widetilde{\rho}\,d\sigma
=0\,.$$ 
It follows that, for every ball of radius $R$ in $\R^2$, the triods $\widetilde{\TTT}_{x_0,\tt_j}$ have
curvatures uniformly bounded in $L^2(B_R)$. 
Moreover, by Lemma~\ref{rescestim}, for every ball 
$B_R$ centered at the origin of $\R^2$ we have the uniform bound 
$\HH^1({\widetilde{\TTT}_{x_0,\tt_j}}\cap B_R)\leq C_R$, for some
constants $C_R$ independent of $j\in\NN$. Then, 
reparametrizing all the triods with arclength, we obtain curves with uniformly 
bounded first derivatives (from above and below away from zero by the assumption on the lengths) 
and with second derivatives in $L^2\loc$.\\
By standard compactness arguments (see~\cite{huisk3,langer2}), 
the sequence ${\widetilde{\TTT}_{x_0,\tt_{j}}}$ of reparametrized
triods admits a subsequence ${\widetilde{\TTT}_{x_0,\tt_{j_l}}}$
which converges weakly in $W^{2,2}\loc$ and also in the $C^1\loc$ topology, to a (possibly empty) set
$\TTT_{\infty}$. If the point $x_0\in\R^2$ is distinct from all the endpoints $P^i$, 
then $\TTT_{\infty}$ has no endpoints, since they go to infinity along the rescaled flow. 
If $x_0=P^i$, the set $\TTT_\infty$ has a single endpoint at the
origin of $\R^2$.\\
As we have already pointed out, by Lemma~\ref{Elemma1}, the limit set
(if not empty) has no self--intersections and multiplicity
one, moreover, if a 3--point is present then the angles are of $120$ degrees by the convergence of the
curves in $C^1\loc$.\\
Since the integral functional 
$$
\widetilde{\TTT}\mapsto
\int\limits_{\widetilde{\TTT}}\vert\,\widetilde{\underline{k}}+x^\perp\vert^2\widetilde{\rho}\,d\sigma
$$
is lower semicontinuous with respect to this convergence
  (see~\cite{simon}), the limit ${\TTT}_\infty$ distributionally satisfies 
  $\underline{k}_\infty+x^\perp=0$. 
  In principle, the limit set is composed by curves in
  $W^{2,2}\loc$, but from the relation
  ${\underline{k}}_\infty+x^\perp=0$, it follows that 
  ${\underline{k}}_\infty$ is continuous, since the curves are
  $C^1\loc$. By a bootstrap argument, it is then easy to see that 
the $\TTT_\infty$ is actually smooth.\\
  Such a limit set is an unbounded triod or curve with at most one endpoint 
  (depending on the choice of the point $x_0$), moreover, by
  Lemma~\ref{Elemma1}, (if not empty) it has no self--intersections.\\
As the relation above implies ${k}_\infty=-\langle
  x\,\vert\,\nu\rangle$ at every point $x\in{\TTT}_\infty$, repeating
  the argument of Lemma~5.2 in~\cite{mannovtor}, if a triod is
  present, it must be centered at the origin of 
  $\R^2$ and this excludes the presence of an endpoint at the same
  time. Indeed, in such case, it must be $x_0=P^1$ (for 
instance) and any blow--up must be contained in a half space (since
the triod does not ``escape'' the convex set $\Omega$ during the
evolution) which is clearly impossible for a triod.\\
Thus, by the same relation, the classification Lemmas~5.2,~5.3,~5.4
and Proposition~5.5 in~\cite{mannovtor}, we can conclude 
that in any case the curvature of the limit set is zero everywhere and
that ${\TTT}_\infty$ is among the sets in the statement.
  
Since on every ball $B_R$ the sequence of rescaled triods
${\widetilde{\TTT}_{x_0,\tt_{j}}}$ can converge (in the $C^1$
topology) only to a limit set with zero curvature, satisfying $x^\perp=0$
and 
$$
\lim_{j\to\infty}\int\limits_{\widetilde{\TTT}_{x_0,\tt_j}\cap B_R}
\vert\,\widetilde{\underline{k}}+x^\perp\vert^2\,d\sigma=0\,,
$$
as the term $x^\perp$ is continuous in the $C^1\loc$ convergence, we actually have that
$$
\lim_{j\to\infty}\int\limits_{\widetilde{\TTT}_{x_0,\tt_j}\cap B_R}
\widetilde{\underline{k}}^2\,d\sigma=0\,.
$$

Finally, the values of $\widehat{\Theta}(x_0)$ in the statement are
obtained through a computation by means of formula~\eqref{gggg}.
\end{proof}

\begin{lemma}\label{remhot}
There exists the limit $x_0=\lim_{t\to T}O(t)$, 
and corresponds to the unique point $x_0\in\overline\Omega$ such that 
$\widehat\Theta(x_0)=3/2$.\\
Moreover, the set of rescaled times
$$
{\mathcal I}_{x_0}\,=\,\bigl\{\,\tt\in[-1/2\log T, +\infty)\,\text{
  such that }\,\vert\, O(t(\tt))-x_0\vert\geq
\sqrt{2(T-t(\tt))}\,\bigr\}
$$
has finite Lebesgue measure.
\end{lemma}

\begin{proof}
We first  show that $\widehat{\Theta}(\cdot)$ can be equal to $3/2$  
only at one point in $\overline{\Omega}$. Assuming that
$\widehat{\Theta}(x_0)=\widehat{\Theta}(y_0)=3/2$, we let
\begin{equation*}
\begin{split}
{\mathcal I}_{x_0}=&\,\bigl\{\,\tt\in[-1/2\log T, +\infty)\,\text{
  such that }\,\vert\, O(t(\tt))-x_0\vert\geq
\sqrt{2(T-t(\tt))}\,\bigr\}\,,\\
{\mathcal I}_{y_0}=&\,\bigl\{\,\tt\in[-1/2\log T, +\infty)\,\text{
  such that }\,\vert\, O(t(\tt))-y_0\vert\geq
\sqrt{2(T-t(\tt))}\,\bigr\}
\end{split}
\end{equation*}
and we claim that both have finite Lebesgue measure. 
Indeed, if the Lebesgue measure of 
${\mathcal I}_{x_0}$ is not finite, we have 
$$
\int\limits_{{{\mathcal I}_{x_0}}}\int\limits_{\widetilde{\TTT}_{x_0,\tt}}
\vert\,\widetilde{\underline{k}}+x^\perp\vert^2\widetilde{\rho}\,d\sigma\,d{\tt}<+\infty\,.
$$
Hence, since we assumed $\widehat{\Theta}(x_0)=3/2$, 
 we can extract a sequence of times $\tt_{j}\in {\mathcal I}_{x_0}$
 such that the rescaled triods ${\widetilde{\TTT}_{x_0,\tt_j}}$
 converge in the $C^1\loc$ topology to 
an infinite flat triod centered at the origin of $\R^2$. This is
 clearly in contradiction with the fact that, by construction, 
every set ${\widetilde{\TTT}_{x_0,\tt_j}}\cap B_{1/2}$ does not
 contain the 3--point of the rescaled triod
 ${\widetilde{\TTT}}_{x_0,\tt_j}$.
We thus proved that ${\mathcal I}_{x_0}$ 
and ${\mathcal I}_{y_0}$ have finite measure.\\
If the points $x_0$ and $y_0$ are distinct, we have a contradiction, as $[\tt_0,+\infty)\setminus{\mathcal I}_{y_0}\subset{\mathcal I}_{x_0}$, if $\tt_0$ is large enough and 
  the set $[\tt_0, +\infty)\setminus{\mathcal I}_{y_0}$ would have
  finite Lebesgue measure as well, which is clearly not possible.

We now see that $\widehat{\Theta}(x_0)=3/2$ holds for every point
$x_0\in\overline{\Omega}$ such that there exists a sequence $t_j\to T$
with $\lim_{j\to\infty}O(t_j)=x_0$. This fact, by the compactness of
$\overline{\Omega}$ and the uniqueness of the point $x_0$, implies the
first statement of the lemma.
 
Fixing any $r\in[0,T)$,  we definitely have $t_j>r$, hence if $O(t_j)\to x_0$, we get
\begin{align*}
\Theta(x_0,r)+b(x_0,r)=&\,
    \int_{{\TT_r}} \frac{e^{-\frac{\vert
    x-x_0\vert^2}{4(T-r)}}}{\sqrt{4\pi(T-r)}}\,ds
+\int_r^T\sum_{i=1}^3\left\langle\,\frac{P^i-x_0}{2(T-t)} ,\tau^i(1,t)\right\rangle
\frac{e^{-\frac{\vert
    P^i-x_0\vert^2}{4(T-t)}}}{\sqrt{4\pi(T-t)}}\,dt\\
=&\,\lim_{j\to\infty}\Biggl\{
 \int_{{\TT_r}} \frac{e^{-\frac{\vert
    x-O(t_j)\vert^2}{4(t_j-r)}}}{\sqrt{4\pi(t_j-r)}}\,ds
+\int_r^{t_j}\sum_{i=1}^3\left\langle\,\frac{P^i-O(t_j)}{2(t_j-t)} ,\tau^i(1,t)\right\rangle
\frac{e^{-\frac{\vert
    P^i-O(t_j)\vert^2}{4(t_j-t)}}}{\sqrt{4\pi(t_j-t)}}\,dt\Biggr\}\\
\geq&\,\lim_{j\to\infty}\lim_{r\to t_j^-}\Biggl\{
 \int_{{\TT_r}} \frac{e^{-\frac{\vert
    x-O(t_j)\vert^2}{4(t_j-r)}}}{\sqrt{4\pi(t_j-r)}}\,ds
+\int_r^{t_j}\sum_{i=1}^3\left\langle\,\frac{P^i-O(t_j)}{2(t_j-t)} ,\tau^i(1,t)\right\rangle
\frac{e^{-\frac{\vert
    P^i-O(t_j)\vert^2}{4(t_j-t)}}}{\sqrt{4\pi(t_j-t)}}\,dt\Biggr\}\\
=&\,\lim_{j\to\infty}\lim_{r\to t_j^-}
\int_{{\TT_r}} \frac{e^{-\frac{\vert
    x-O(t_j)\vert^2}{4(t_j-r)}}}{\sqrt{4\pi(t_j-r)}}\,ds\,,
    \end{align*}
where the inequality follows from Proposition~\ref{promono} and in the last passage we applied 
Lemma~\ref{stimadib} with $t_j$ in place of $T$. 
Indeed, the monotonicity formula (and actually all the previous strategy) 
holds also if $T$ is not the maximal existence time. 
Repeating all the argument in the Proposition~\ref{resclimit} at time $t_j$, we then see that 
  the last integral inside the limit must be equal to $3/2$ (as we are
  rescaling exactly around the 3--point $O(t_j)$) and thus the only
  possible limit of rescaled triods is an unbounded  
  triod in $\R^2$ centered at the origin.\\
  Hence, we can conclude that for every $r\in[0,T)$ there holds
  $\Theta(x_0,r)+b(x_0,r)\geq 3/2$, which, when $r\to T$, implies that
  $\widehat{\Theta}(x_0)=3/2$.
\end{proof}

In the following, given $\overline{x}\in\R^2$ and $R>0$, we denote by $Q_R(\overline{x})$ the square
$$Q_R(\overline{x}):=\left\{ x\in\R^2:\  |x_1-\overline{x}_1|\le R,\, |x_2-\overline{x}_2|\le R\right\}.$$

\begin{prop}\label{prograph}
Suppose that the curve $\gamma_0$ is a graph over $\langle e_1\rangle$
in the square $Q_{2R}(x_0)$, and assume that the curve 
$\gamma_t\cap Q_{2R}(x_0)$ is contained in the horizontal strip
$\{|x_2|\le \delta\}$ for any $t\in[0,\tau)$, with $\tau>0$ and 
$0<\delta<R$.\\
Then $\gamma_t\cap Q_{2R}(x_0)$ is a graph over $\langle e_1\rangle$ for all $t\in[0,\tau)$.
\end{prop}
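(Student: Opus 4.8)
The plan is to use a parabolic maximum principle / continuity argument to show that the tangent line to $\gamma_t$ never becomes vertical inside $Q_{2R}(x_0)$, as long as the curve stays in the thin strip $\{|x_2|\le\delta\}$. Write $\gamma_t$ locally in the strip and consider the quantity $v = \langle \tau, e_1\rangle^{-1}$ (equivalently $\langle \nu, e_2 \rangle^{-1}$ up to sign), which measures the "gradient" of $\gamma_t$ as a graph over $\langle e_1\rangle$: the curve is a graph exactly where $v$ is finite, i.e. where $\langle\tau,e_1\rangle$ stays bounded away from zero. Using the evolution equations from Lemma~\ref{evoluzioni} together with the commutation rule of Lemma~\ref{commute}, one computes a parabolic evolution equation for $\langle\tau,e_1\rangle$ (or directly for $v$) of the form $\partial_t\langle\tau,e_1\rangle = \langle\tau,e_1\rangle_{ss} + \lambda\langle\tau,e_1\rangle_s + (\text{lower order})\langle\tau,e_1\rangle$, since $e_1$ is a fixed vector and $\partial_t\tau = (k_s+k\lambda)\nu$, $\partial_t\nu = -(k_s+k\lambda)\tau$. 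The key point is that this is a linear parabolic equation in $\langle\tau,e_1\rangle$ with no zeroth-order source term that could force a sign change interior to the evolution — $\langle\tau,e_1\rangle$ satisfies a maximum principle on the region $\gamma_t\cap Q_{2R}(x_0)$.

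The main obstacle, and the reason the strip hypothesis is essential, is controlling the parabolic boundary. The curve $\gamma_t$ is not compact and $\gamma_t\cap Q_{2R}(x_0)$ has a boundary consisting of the points where $\gamma_t$ exits the square. First I would observe that since $\gamma_t\cap Q_{2R}(x_0)\subset\{|x_2|\le\delta\}$ with $\delta<R$, the curve can only leave the square through the two vertical sides $\{x_1 = x_{0,1}\pm 2R\}$; in particular it cannot exit through the horizontal sides. At $t=0$ the curve is a graph over all of $Q_{2R}(x_0)$, so in particular near the vertical sides $\langle\tau,e_1\rangle$ is bounded away from zero there. The strategy is then a continuity (open-closed) argument on the set of times $t\in[0,\tau)$ for which $\gamma_t\cap Q_{2R}(x_0)$ is a graph: this set is open by short-time smoothness and the implicit function theorem, it is nonempty ($t=0$), and to show it is closed one runs the maximum principle for $\langle\tau,e_1\rangle$ on the time-slab up to the putative first failure time, using the fact that at the lateral boundary points the curve enters and exits the strip transversally to the vertical sides — so $\langle\tau,e_1\rangle$ stays bounded below there by a geometric/topological argument (a connected curve crossing the slab $\{|x_2|\le\delta\}$ from one vertical side region to the other, while the vertical sides have height $2R>2\delta$, must have tangent with positive $e_1$-component at the crossing). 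Hence $\langle\tau,e_1\rangle$ cannot reach $0$ in the interior.

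A cleaner way to organize the boundary control is to note that the hypothesis "$\gamma_t\cap Q_{2R}(x_0)\subset\{|x_2|\le\delta\}$" combined with "$\gamma_0$ is a graph over the full width $[x_{0,1}-2R,\,x_{0,1}+2R]$" forces, by elementary topology of plane curves, that for every $t$ the portion of $\gamma_t$ inside $Q_{2R}(x_0)$ connects the left vertical side to the right vertical side and stays in the strip; then being a graph is equivalent to $\langle\tau,e_1\rangle$ never vanishing on this portion, and the strong maximum principle applied to the linear parabolic equation above — with the endpoints handled as just described — gives that $\langle\tau,e_1\rangle$ retains its sign. I expect the delicate point to be purely this boundary bookkeeping: showing rigorously that the lateral exit points depend continuously on $t$ and that no "new" exit point through a horizontal side can appear, which is exactly where $\delta<R$ is used. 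Once that is in place the interior argument is the standard gradient estimate for curve shortening written in nonparametric form.
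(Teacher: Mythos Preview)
Your approach is genuinely different from the paper's, and the gap is exactly where you suspected: the boundary control.

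The paper does \emph{not} run a maximum principle on $\langle\tau,e_1\rangle$. Instead it counts intersections with vertical segments and invokes Angenent's Sturmian theorem. The argument is by contradiction and is completely local: if the number of intersections of $\gamma_t$ with some vertical segment $\ell$ jumps from one to more than one at a first time $\overline t$, then $\gamma_{\overline t}$ has a vertical tangent at some point $\overline x$. In a tiny square $Q_\delta(\overline x)$ one writes $\gamma_{\overline t}$ as a graph over the \emph{vertical} line, uses Ecker--Huisken to keep it a vertical graph for a short time, and then applies Angenent's zero--counting theorem to the difference of two solutions of the scalar heat--type equation to conclude that the cardinality of $\gamma_t\cap\ell$ inside $Q_{\delta/2}(\overline x)$ is nonincreasing on $[\overline t,\overline t+\varepsilon]$, contradicting the assumed jump. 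The only boundary condition needed is that $\gamma_t$ does not meet $\ell$ on $\partial Q_{\delta/2}(\overline x)$, which is immediate by continuity since at time $\overline t$ the unique intersection point is interior. Nothing global about the exit points of $\gamma_t$ from $Q_{2R}(x_0)$ is used.

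Your evolution equation is correct (one gets $\partial_t\langle\tau,e_1\rangle=\langle\tau,e_1\rangle_{ss}+\lambda\langle\tau,e_1\rangle_s+k^2\langle\tau,e_1\rangle$), and the interior strong maximum principle would indeed prevent an interior zero. The problem is the lateral boundary. Your ``geometric/topological'' claim --- that a curve in the strip crossing a vertical side must have $\langle\tau,e_1\rangle\ne 0$ at the crossing --- is false: a smooth curve can cross the line $\{x_1=x_{0,1}+2R\}$ with a vertical tangent (an inflection--type crossing with $f'(s_0)=0$, odd--order nonvanishing derivative). The hypothesis $\delta<R$ only forbids exit through the horizontal sides; it says nothing about the tangent direction at the vertical sides. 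So you have no lower bound on $\langle\tau,e_1\rangle$ at the moving lateral boundary, and the maximum principle argument stalls precisely there. The paper's Sturmian approach sidesteps this by never needing control at those exit points.
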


\begin{proof}
We claim that the number of intersections of $\gamma_t$ with any vertical segment of the form
$\ell_x:=\{x+s e_2: x\in Q_{2R}(x_0), s\in\R\}\cap Q_{2R}(x_0)$
is nonincreasing in time, hence it is constantly equal to 1 as $\gamma_0$ is a graph in $Q_{2R}(x_0)$ over $\langle e_1\rangle$.
It then follows that $\gamma_t\cap Q_{2R}(x_0)$ is a graph over
$\langle e_1\rangle$ for all $t\in[0,\tau)$ and the thesis is proven.

In order to prove the claim, let us assume by contradiction that there
exist a vertical segment $\ell$ and a time ${\overline{t}}\ge 0$ such
that 
the set $\gamma_t\cap \ell$ is a single point for $t\in
[0,{\overline{t}})$, and has cardinality strictly greater than 1 for a
sequence 
$t_n\downarrow {\overline{t}}$. In particular, there exist a point
${\overline{x}}\subset \gamma_{{\overline{t}}}\cap \ell$ and two sequences
$x_n,\,y_n$ such that
\begin{equation}\label{xyn}
\textrm{
$x_n,y_n\in \gamma_{t_n}\cap \ell$, $x_n\ne y_n$
and $\lim_{n\to\infty} x_n = \lim_{n\to\infty} y_n={\overline{x}}$.}
\end{equation}
It follows that $\gamma_{{\overline{t}}}$ has a vertical tangent line
at ${\overline{x}}$, so that we can write $\gamma_{{\overline{t}}}\cap
Q_\delta({\overline{x}})$ as 
a smooth graph over $\ell$ for a suitably small $\delta>0$.

By~\cite[Theorem 2.1]{eckhui2} there exists $\eps>0$ such that
$\gamma_t\cap Q_\frac\delta 2({\overline{x}})$ is also a graph over
$\ell$ for all 
$t\in [{\overline{t}}, {\overline{t}}+\eps]$ and, by continuity, at
the intersection with $\partial Q_{\frac{\delta}{2}}(\overline{x})$
the curve $\gamma_t$ does not intersect $\ell$. 
Then, by the Sturmian theorem of Angenent in~\cite[Proposition~1.2]{angen2} and~\cite[Section~2]{angen1}
(see~\cite{angen5} for the proof), we have that the cardinality of
$\gamma_t\cap\ell$ in $Q_\frac\delta 2({\overline{x}})$ is 
nonincreasing in time on $[{\overline{t}}, {\overline{t}}+\eps]$, thus
contradicting property~\eqref{xyn}.
\end{proof}

\begin{cor}\label{corgraph}
Assume that $\g_0\cap B_{7R}(x_0)$ is a graph over $\langle e_1\rangle$, contained in the
horizontal strip $\{|x_2|\le R\}$.
Then $\g_t\cap B_{2R}(x_0)$ is a graph over $\langle e_1\rangle$
for all $t\in [0,\tau)$, with $\tau = R^2/2$.
Moreover, letting $v=\langle\nu\,\vert\,e_2\rangle^{-1}$, we have
$$
\sup_{t\in [0,\tau)}\sup_{\gamma_t\cap B_{R}\,(x_0)}v\leq C\sup_{\gamma_0\cap B_{2R}\,(x_0)}v
$$
for some $C>0$ independent of $R$.
\end{cor}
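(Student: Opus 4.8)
The plan is to deduce Corollary~\ref{corgraph} from Proposition~\ref{prograph} together with the interior gradient estimate of Ecker--Huisken. The first step is to observe that the hypotheses of Proposition~\ref{prograph} are satisfied on a suitable sub-square: since $\g_0\cap B_{7R}(x_0)$ is a graph over $\langle e_1\rangle$ lying in $\{|x_2|\le R\}$, it is in particular a graph in $Q_{4R}(x_0)$. To apply Proposition~\ref{prograph} (with its $R$ replaced by $2R$ and its $\delta$ by some $\delta'<2R$) I need a horizontal strip containment for the \emph{evolving} curve $\g_t\cap Q_{4R}(x_0)$ on the whole interval $[0,\tau)$. This is where one uses $\tau=R^2/2$: the speed of the flow is controlled by the curvature, and on a time interval of length comparable to $R^2$ a curve which starts as a graph in the strip $\{|x_2|\le R\}$ can only leave a strip of size $\sim R$ by first developing a vertical tangent at the lateral boundary of the square — but the avoidance/comparison principle (or a barrier argument against horizontal lines $\{x_2=c\}$, which are static solutions) keeps it inside $\{|x_2|\le R\}$ for as long as its endpoints on $\partial Q_{4R}(x_0)$ stay there, and those are controlled because the graph property on the larger ball $B_{7R}(x_0)$ gives room. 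Here one should be slightly careful that $\g_t$ is a single embedded curve of the triod (not the whole triod), so that the strip containment is really a statement about one curve; the $120^\circ$-junction lies outside the relevant squares, or is handled because the triple point moves with bounded speed on $[0,R^2/2)$. Once the strip hypothesis is verified, Proposition~\ref{prograph} gives that $\g_t\cap Q_{4R}(x_0)\supset\g_t\cap B_{2R}(x_0)$ is a graph over $\langle e_1\rangle$ for all $t\in[0,\tau)$, which is the first assertion.

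For the gradient bound, I would pass to the local graph description. Writing the portion of $\g_t$ in $B_{2R}(x_0)$ as $x_2=u(x_1,t)$, the curvature flow becomes the scalar quasilinear parabolic equation $u_t = u_{x_1x_1}/(1+u_{x_1}^2)$, and $v=\langle\nu\,\vert\,e_2\rangle^{-1}=\sqrt{1+u_{x_1}^2}$ is the gradient function. The function $v$ satisfies a uniformly parabolic equation of the form $\partial_t v = \Delta_{\g_t} v - |A|^2 v + (\text{lower order})$ along the graphical flow — this is precisely the setting of the Ecker--Huisken interior estimates (\cite{eckhui1}, cf.\ also \cite{eckhui2}), which yield, after rescaling to unit scale, a bound of the form
$$
\sup_{t\in[0,\tau)}\ \sup_{\g_t\cap B_R(x_0)} v \ \le\ C\sup_{\g_0\cap B_{2R}(x_0)} v
$$
with $C$ depending only on the dimension (here $1$) and on the parabolic geometry, hence independent of $R$ by scaling. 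The clean way to see scale-invariance is to dilate by $1/R$ and translate $x_0$ to the origin: the rescaled curve is a graph in $B_2$ inside the strip $\{|x_2|\le 1\}$ evolving on $[0,1/2)$, so the constant $C$ produced by the (fixed) Ecker--Huisken estimate on this unit-scale configuration does not see $R$ at all.

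The main obstacle I expect is \emph{not} the gradient estimate itself — that is essentially quoting \cite{eckhui1} — but rather \textbf{verifying the strip-containment hypothesis of Proposition~\ref{prograph} up to the whole time $\tau=R^2/2$ in the presence of the triple junction}. One must make sure that on $[0,R^2/2)$ the evolving curve $\g_t$ cannot escape $\{|x_2|\le R\}$ within $Q_{4R}(x_0)$: since a curve moving by curvature is a subsolution/supersolution for the appropriate comparison with the flat lines $\{x_2=\pm R\}$ (which are themselves stationary), the curve stays trapped between them \emph{provided its boundary values on $\partial Q_{4R}(x_0)$ do}; and those boundary values are controlled because $\g_0$ is graphical on the still larger ball $B_{7R}(x_0)$, giving a "buffer" region where the same argument applies to push the loss of graphicality ever outward. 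Making this nested-square / finite-propagation argument precise — and checking that the triple point, whose speed is a priori only bounded by $\sup|k|$ which we do \emph{not} yet control, stays far from the squares in question (or else is irrelevant because we only need one of the three curves) — is the delicate bookkeeping step. Everything else is a change of variables and an invocation of the interior estimates.
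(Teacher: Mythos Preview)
Your overall plan is correct --- apply Proposition~\ref{prograph} to get the graphicality and then quote Ecker--Huisken for the interior gradient bound --- and this matches the paper. The second part (the $v$--estimate) is exactly what the paper does: it simply invokes Theorem~2.3 in~\cite{eckhui2}.

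Where your proposal genuinely diverges from the paper, and where it has a gap, is the strip--containment step. You propose to compare with the static horizontal lines $\{x_2=\pm R\}$ and then handle the boundary values on $\partial Q_{4R}(x_0)$ by a ``nested--square / finite--propagation'' argument. You correctly flag this as the delicate point, but you do not actually carry it out, and as written it is circular: to control the boundary values on $\partial Q_{4R}(x_0)$ you would need graphicality (or at least strip containment) on the larger ball, which is precisely what you are trying to propagate. It is also not clear that such an argument would produce the specific value $\tau=R^2/2$.

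The paper resolves this with a different, concrete barrier. Instead of horizontal lines, it places two \emph{shrinking circles} centered at $x_\pm=x_0\pm 4Re_2$ with initial radius $3R$; one checks directly that $B_{3R}(x_+)\cup B_{3R}(x_-)\subset B_{7R}(x_0)$ and that these balls are disjoint from the strip $\{|x_2|\le R\}$, hence disjoint from $\gamma_0$. By the avoidance principle for curve shortening flow, $\gamma_t$ never meets the shrinking balls $B_{R(t)}(x_\pm)$ with $R(t)=\sqrt{9R^2-2t}$. Since circles are closed, no boundary control is needed --- this is the whole point. A short computation shows that as long as $R(t)>2\sqrt{2}R$, i.e.\ $t<R^2/2$, the top and bottom edges of $Q_{2R}(x_0)$ are still covered by these balls, so $\gamma_t\cap Q_{2R}(x_0)$ cannot touch them. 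This is exactly the hypothesis of Proposition~\ref{prograph} (with its $R$ equal to the present $R$), and it explains where $\tau=R^2/2$ comes from. Your worry about the triple junction is also handled by this argument: the avoidance principle applies to the whole flowing set, so the location of the $3$--point is irrelevant once one knows the initial configuration is disjoint from the barrier circles.
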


\begin{proof}
Letting $x_\pm = x_0\pm 4Re_2$, by assumption we have that $\g_0$ is
contained in the complementary of 
the set $B_{3R}(x_+)\cup B_{3R}(x_-)\subset B_{7R}(x_0)$.

By comparison principle, it follows that
$\g_t$ does not intersect the set $B_{R(t)}(x_+)\cup B_{R(t)}(x_-)$,
with $R(t)=\sqrt{9R^2-2t}$, for all $t\in [0,9R^2/2)$. 
In particular, $\g_t\cap Q_{2R}(x_0)$ does not intersect the upper and
lower edge of the square $Q_{2R}(x_0)$ if $t\in [0,\tau)$, with
$\tau=R^2/2$. 
Therefore, from Proposition~\ref{prograph} it follows that $\g_t\cap
Q_{2R}(x_0)$, hence also $\g_t\cap B_{2R}(x_0)$, 
is a graph over $\langle e_1\rangle$ for all $t\in [0,R^2/2)$.\\
The last assertion of the corollary then follows from Theorem~2.3 in~\cite{eckhui2},
noticing that if $\g_t$ is the graph of the function $u(\cdot\,, t)$, then 
$v = \sqrt{1+\vert u^\prime\vert^2}$.
\end{proof}

We recall the following result~\cite[Corollary~3.2 and Corollary~3.5]{eckhui2}.

\begin{prop}\label{teoeh}
Suppose that
$\gamma_t$ is a graph over $\langle e_1\rangle$ in $B_{R}(x_0)$
for all $t\in[0,\tau)$. Then
letting $\theta\in (0,1)$ and $m\geq0$, we have
\begin{equation*}
\sup_{\gamma_t\cap B_{\sqrt{\theta R^2-2t}}\,(x_0)}t^{m+1}|\partial_s^m k|^{2}\leq
C_{m,v}
\end{equation*}
for all $t\in[0,\tau)$, where the constant $C_{m,v}$ depends only on $m$, $\theta$ and
$\sup_{t\in[0,\tau)}\sup_{\gamma_t\cap B_{\sqrt{R^2-2t}}\,(x_0)} v$.
\end{prop}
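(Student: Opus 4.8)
The plan is to reduce the statement to the interior estimates of Ecker and Huisken for graphical mean curvature flow — these are precisely \cite[Corollary~3.2 and Corollary~3.5]{eckhui2} — and to verify that the geometric quantities here correspond to theirs in the one--dimensional case $n=1$.

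First I would exploit the hypothesis: for each $t\in[0,\tau)$ the set $\gamma_t\cap B_R(x_0)$ is, for a fixed choice of coordinates, the graph of a smooth function $x\mapsto u(x,t)$ over an interval of the line $x_0+\langle e_1\rangle$. Since motion by curvature is a geometric flow, the graph function $u$ solves the quasilinear parabolic equation $u_t=u_{xx}/(1+u_x^2)$ on the relevant space--time region, so that $\gamma_t\cap B_R(x_0)$ is a localized solution of graphical curve shortening in the sense of \cite{eckhui2} with $n=1$. Under this identification, the ``gradient function'' of \cite{eckhui2} is $v=\sqrt{1+u_x^2}=\langle\nu\,\vert\,e_2\rangle^{-1}$, the norm of the second fundamental form of the graph is $|A|=|k|$, and, since for a curve the covariant derivatives $\nabla^m A$ reduce, up to universal combinatorial factors, to the arclength derivatives $\partial_s^m k$, one has $|\nabla^m A|=|\partial_s^m k|$, so that $t^{m+1}|\partial_s^m k|^2=t^{m+1}|\nabla^m A|^2$.

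Then I would simply invoke the Ecker--Huisken estimates. Since $\gamma_t\cap B_R(x_0)$ is graphical for every $t\in[0,\tau)$ — with no triple junction or endpoint inside $B_R(x_0)$, as these are forced out of this ball by the graph assumption, so that no boundary condition enters — the localized interior estimates apply verbatim: \cite[Corollary~3.2]{eckhui2} gives the case $m=0$, namely $\sup_{\gamma_t\cap B_{\sqrt{\theta R^2-2t}}(x_0)}t\,|k|^2\leq C_{0,v}$, and \cite[Corollary~3.5]{eckhui2}, via the standard parabolic bootstrap carried out there, gives $\sup_{\gamma_t\cap B_{\sqrt{\theta R^2-2t}}(x_0)}t^{m+1}|\partial_s^m k|^2\leq C_{m,v}$ for all $m\geq1$, where in each case the constant depends only on $m$, on $\theta\in(0,1)$ and on $\sup_{t\in[0,\tau)}\sup_{\gamma_t\cap B_{\sqrt{R^2-2t}}(x_0)}v$; the shrinking balls $B_{\sqrt{\theta R^2-2t}}(x_0)$ are exactly the regions on which those estimates are stated, the loss of radius being the usual price of interior parabolic regularity.

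The hard part, such as it is, is not analytic but bookkeeping: one must be careful (i) to observe that $\gamma_t\cap B_R(x_0)$, though only a sub-arc of an evolving triod, is a genuine solution of graphical curve shortening flow inside $B_R(x_0)$ (true because the graph hypothesis keeps the triple junction and the endpoints out of this ball), and (ii) to set up correctly the dictionary between the parametric formulation used here ($k$, $\partial_s^m k$, $\nu$) and the hypersurface formulation of \cite{eckhui2} ($|A|$, $|\nabla^m A|$, the Gauss map and associated gradient function), so that the cited estimates can be quoted in the stated form. Alternatively, if one preferred a self-contained proof, one would reproduce the Ecker--Huisken argument directly, using the evolution equations for $v$, $|k|^2$ and $|\partial_s^m k|^2$ together with cutoff functions adapted to the flow (of the form $(\theta R^2-2t-|x-x_0|^2)_+$ raised to suitable powers) and a Stampacchia / maximum--principle iteration; but appealing to \cite{eckhui2} is cleaner.
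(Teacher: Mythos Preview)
Your proposal is correct and matches the paper's approach exactly: the paper does not give a proof but simply cites \cite[Corollary~3.2 and Corollary~3.5]{eckhui2}, and your argument amounts to spelling out why that citation applies (graphicality inside $B_R(x_0)$, the dictionary $|A|=|k|$, $|\nabla^m A|=|\partial_s^m k|$, $v=\langle\nu\,\vert\,e_2\rangle^{-1}$). There is nothing to add.
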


\begin{prop}\label{prokappa}
Let $\gamma_t$ be as in Proposition~\ref{teoeh}.
For all $\theta\in (0,1)$ we have
\begin{equation}\label{iefin}
\sup_{\gamma_t\cap B_{\sqrt{\theta R^2-2t}}\,(x_0)}|k|^{2}\leq \frac{C_v}{(1-\theta)^2}\biggl( \frac{1}{R^2}
+\sup_{\gamma_0\cap B_{R}\,(x_0)}|k|^{2}\biggr)
\end{equation}
for all $t\in[0,\tau)$, where the constant $C_v$ depends only on $\sup_{t\in[0,\tau)}\sup_{\gamma_t\cap B_{R}\,(x_0)} v$.
\end{prop}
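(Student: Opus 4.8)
The plan is to run a localized maximum principle for $|k|^2$ along the flow, incorporating $v$ into the test function so as to absorb the only dangerous reaction term. First I would normalize by parabolic scaling: replacing $\gamma_t$ by $R^{-1}\gamma_{R^2 t}$ leaves $v$, all the structural evolution equations and the shape of~\eqref{iefin} invariant, while multiplying $\sup_{\gamma_0\cap B_R(x_0)}|k|^2$ by $R^2$, so it suffices to prove the statement for $R=1$; and there is nothing to prove once $B_{\sqrt{\theta-2t}}(x_0)$ is empty. On the curve $\gamma_t$ one has (Lemma~\ref{evoluzioni} and the classical graphical identity for $v=\langle\nu\,\vert\,e_2\rangle^{-1}$, the first--order tangential terms being harmless transport terms)
\begin{equation}\label{twoevol}
(\partial_t-\partial_s^2)k^2=2k^4-2k_s^2\,,\qquad (\partial_t-\partial_s^2)v^2=-2k^2v^2-6v_s^2\,,
\end{equation}
with $v_s=u_x\,k\,v$ when $\gamma_t=\mathrm{graph}(u(\cdot,t))$, hence $|v_s|\le v^2|k|$ and $v\le V:=\sup_{t\in[0,\tau)}\sup_{\gamma_t\cap B_1(x_0)}v$; also $|k|=|u_{xx}|v^{-3}\le|u_{xx}|\le V^3|k|$, so bounding $k$ is the same as bounding $u_{xx}$.

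Next I would choose the test function $g:=\phi^2 v^2 k^2$, with $\phi=\chi(|x-x_0|^2+2t)$ and $\chi$ a fixed profile, $\chi\equiv1$ on $(-\infty,\theta]$, $\chi\equiv0$ on $[1,\infty)$, $|\chi'|\le C(1-\theta)^{-1}$, $|\chi''|\le C(1-\theta)^{-2}$. Since $(\partial_t-\partial_s^2)(|x-x_0|^2+2t)=0$ modulo transport terms, one gets $|\partial_s\phi|^2\le C(1-\theta)^{-2}$ and $|(\partial_t-\partial_s^2)\phi^2|\le C(1-\theta)^{-2}$, while $\phi\equiv1$ exactly on the target set $B_{\sqrt{\theta-2t}}(x_0)$. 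The key point is the computation of $(\partial_t-\partial_s^2)(v^2k^2)$ from~\eqref{twoevol}: the term $+2k^4v^2$ produced by the $k^2$--equation is exactly cancelled by $k^2\cdot(-2k^2v^2)$ coming from the $v^2$--equation, leaving $-2v^2k_s^2-6k^2v_s^2$ and a cross term $-2\partial_s(v^2)\,\partial_s(k^2)$; the latter is absorbed into $-2v^2k_s^2$ by Young's inequality at the price of a multiple of $k^2v_s^2\le V^4k^4$, and this residual $k^4$ is in turn controlled by using $\partial_s g=0$ at a spatial maximum of $g$ to re--express $\partial_s(k^2)$ through $\partial_s(\phi^2v^2)$.

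Then I would argue in the standard way: if $\max\{g(\cdot,t):0\le t\le t^\ast\}$, for some $t^\ast<\tau$, exceeds $\max_{\gamma_0}g$, it is attained at a point $(p_0,t_0)$ with $t_0>0$ and $p_0$ interior to $\{\phi>0\}$, where $\partial_t g\ge0$, $\partial_s g=0$, $\partial_s^2 g\le0$. Feeding this into $(\partial_t-\partial_s^2)g\le0$ at $(p_0,t_0)$, and using~\eqref{twoevol}, the bounds on $\phi$, $v$, $|v_s|$ and Young's inequality, all $k^4$ terms cancel and one is left with $g(p_0,t_0)\le C(V)(1-\theta)^{-2}$. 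Therefore, on $\{0\le t\le t^\ast\}$,
\[
g\le\max\{\,V^2\sup_{\gamma_0\cap B_1(x_0)}|k|^2,\ C(V)(1-\theta)^{-2}\,\}\le C(V)(1-\theta)^{-2}\bigl(1+\sup_{\gamma_0\cap B_1(x_0)}|k|^2\bigr)
\]
for every $t^\ast<\tau$; restricting to $\{\phi\equiv1\}$ (where $k^2\le v^2k^2=g$) and undoing the rescaling yields~\eqref{iefin} with $C_v$ depending on $\gamma_0$ and $\Omega$ only through $\sup v$. For times bounded below, $t\ge c(1-\theta)R^2$, one may instead simply quote Proposition~\ref{teoeh} with $m=0$ and $\theta'=(1+\theta)/2$, which already gives $|k|^2\le C_v(\theta')/t\le C_v(1-\theta)^{-2}R^{-2}$; so the real content is the regime near $t=0$, where the bound must be propagated from the initial data and hence cannot follow from Proposition~\ref{teoeh}.

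The main obstacle is precisely the reaction term $+2k^4$ in the evolution of $k^2$, which prevents both $|k|^2$ and $v^2k^2$ from being subsolutions, so that the estimate does not come from a naive maximum principle (nor, near $t=0$, from Proposition~\ref{teoeh}). What saves it is the pairing with $v^2$ together with the a priori bound $v\le V$ from Corollary~\ref{corgraph}: the identity $(\partial_t-\partial_s^2)v^2=-2k^2v^2-6v_s^2$ supplies exactly the negative $k^4$ needed for the cancellation. Once this mechanism is in place the remaining work is bookkeeping — checking that after Young's inequality and the maximum--point relations no net positive multiple of $k^4$ survives, and that the final constant depends on $\gamma_0$ and $\Omega$ only through $\sup v$, with the explicit factors $R^{-2}$ and $(1-\theta)^{-2}$.
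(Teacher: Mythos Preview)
Your overall strategy---localized maximum principle with a test quantity built from $k^2$, $v$, and a spacetime cutoff---is the right one, and the paper follows exactly this route (it is essentially Ecker--Huisken, Theorem~3.1). The gap is in the choice of the $v$-factor.

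With your test quantity $h=v^2k^2$ the leading $k^4$ terms do cancel, but the resulting quadratic form
\[
(\partial_t-\partial_s^2)(v^2k^2)=-2v^2k_s^2-6k^2v_s^2-8vv_skk_s
\]
(in the variables $X=vk_s$, $Y=v_sk$) has matrix $\bigl(\begin{smallmatrix}-2&-4\\-4&-6\end{smallmatrix}\bigr)$ with determinant $-4$, hence is \emph{indefinite}. Any Young split you perform leaves a net positive multiple of $v_s^2k^2\le V^4k^4$. Using the maximum-point relation $\partial_sg=0$ to eliminate $k_s$ does not save you: after substituting $k_s=-k(\phi_s/\phi+v_s/v)$ and simplifying, the $v_s^2k^2$ contributions indeed drop out, but one is left with
\[
\phi^2(\partial_t-\partial_s^2)h=-2v^2k^2\phi_s^2+4vk^2v_s\phi\phi_s\,,
\]
and the second term, after Young, regenerates $\phi^2v_s^2k^2\le V^4\phi^2k^4\sim g^2/\phi^2$, which blows up near the edge of the cutoff. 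So the claim that ``no net positive multiple of $k^4$ survives'' is not correct, and the argument does not close.

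The paper fixes this by using the Ecker--Huisken weight $g=k^2\varphi(v^2)$ with $\varphi(s)=s/(1-cs)$ and cutoff $\eta=(R^2-|x|^2-2t)^2$. The point of this nonlinear $\varphi$ is that the combination $2k^4\varphi-2k^4v^2\varphi'$ is no longer zero but equals $-2c\,k^4v^4/(1-cv^2)^2=-2cg^2$: one obtains $(\partial_t-\Delta)(g\eta)\le -2cg^2\eta+\text{(gradient terms)}+C(\cdot)\,g$, so at the spatial maximum of $g\eta$ the ODE $m'(t)\eta/(2c)\le -m^2+Cm$ forces $m(t)\le\max\{m(0),C\}$. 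Choosing $c=\tfrac12\min v^{-2}$ makes $\varphi$ well-defined and yields constants depending only on $\sup v$. In short, the ``exact'' cancellation you exploit with $v^2$ is too exact---one needs the strictly negative $-2cg^2$ term produced by $\varphi(s)=s/(1-cs)$ to absorb the cross derivatives.
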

\begin{proof}
Let $g=k^2\p(v^2)$, with $\p(s):=s(1-cs)^{-1}$ and $c>0$ to be chosen later,
and let $\eta=(R^2-|x|^2-2t)^2$. By a direct computation as in the proof of Theorem~3.1 in~\cite{eckhui2}, we obtain
\begin{equation}\label{eqprima}
\left( \partial_t-\Delta\right)g\eta \le
-2cg^2\eta 
-2\bigl\langle \p v^{-3}\nabla v + \eta^{-1}\nabla\eta,\nabla(g\eta)\bigr\rangle
+ C(n)\left( \Bigl( 1+\frac{1}{cv^2}\Bigr)\Bigl(|x|^2+2t\Bigr)+R^2\right)g\,.
\end{equation}
At a point where $m(t):=\max_{\gamma_t\cap B_{\sqrt{R^2-2t}}(x_0)} \, (g\eta)$ is attained in space,
multiplying inequality~\eqref{eqprima} by $\frac{\eta}{2c}$ we get
\[
m^\prime(t) \frac{\eta}{2c} \le - m(t)^2 + \frac{C(n)}{2c}\left( \Bigl( 1+\frac{1}{cv^2}\Bigr)\Bigl(|x|^2+2t\Bigr)+R^2\right)m(t)\,,
\]
which yields $m^\prime(t)\le 0$ as soon as
\[
m(t)\ge \frac{C(n)}{2c}\left( \Bigl( 1+\frac{1}{cv^2}\Bigr)\Bigl(|x|^2+2t\Bigr)+R^2\right)\,.
\]
Choosing 
$$c=\frac{1}{2}\inf_{t\in [0,\tau)}
\inf_{\gamma_t\cap B_{\sqrt{R^2-2t}}(x_0)} \, v^{-2},
$$ 
we obtain 
$$
cv^2\leq \frac{1}{2} 
\left(\inf_{\gamma_t\cap B_{\sqrt{R^2-2t}}(x_0)} \, v^{-2}\right)
\left(\sup_{\gamma_t\cap B_{\sqrt{R^2-2t}}(x_0)} \, v^{2}\right)=1/2
$$
in $\gamma_t\cap B_{\sqrt{R^2-2t}}(x_0)$, hence 
$\p(v^2)\le 2v^2$, and the function $\eta g$ is well defined.
Moreover we have $m^\prime(t)\le 0$ whenever
\[
m(t)\ge 4C(n)\, R^2 \sup_{\gamma_t\cap B_{\sqrt{R^2-2t}}(x_0)} \, v^2.
\]
As $\p(v)\ge 1$ and $\eta\ge (1-\theta)^2R^4$ in $B_{\sqrt{\theta R^2-2t}}\,(x_0)$, it follows
\begin{eqnarray*}
\sup_{\gamma_t\cap B_{\sqrt{\theta R^2-2t}}\,(x_0)}|k|^{2}&\le&
(1-\theta)^{-2}R^{-4}\sup_{\gamma_t\cap B_{\sqrt{R^2-2t}}(x_0)} \,
(g\eta)\\
&\le& (1-\theta)^{-2}
\max\,\biggl\{\frac{m(0)}{R^{4}}, \frac{4C(n)}{R^{2}}
\,\sup_{\gamma_t\cap B_{\sqrt{R^2-2t}}(x_0)} \, v^2\biggr\}
\end{eqnarray*}
which gives estimate~\eqref{iefin} as 
$$
m(0)\leq 2 R^4 \left(\sup_{\gamma_0\cap B_{R}(x_0)} \, v^2\right)
\left(\sup_{\gamma_0\cap B_{R}(x_0)}\, |k|^2\right).
$$
\end{proof}

\section{Proof of Theorem~\ref{corfin}}

The fact that if $T=+\infty$ and the lengths of the three curves of a
triod moving by curvature are bounded below away from zero uniformly
in time, then the evolving triod $\TT_t$
tends as $t\to +\infty$ to the unique Steiner triod connecting the
three fixed endpoints is shown in~Section 8 of~\cite{mannovtor}.

This section is devote to exclude finite time singularities
(i.e. $T<+\infty$) for a triod moving by curvature, whose curves have lengths bounded away from zero from below,
uniformly in time. From this fact, Theorem~\ref{corfin} follows.

To this aim, we will proceed with an argument by contradiction relying on the
$C^1\loc$ convergence (with the $L^2$ norm of the curvature going to
zero in every compact subset of $\R^2$) of a sequence of rescaled
triods to any of the three singularity models in
Proposition~\ref{resclimit}. The argument is similar in spirit to
the one in~\cite{manmaggra}, adapted to the case of an evolving triod.

To set the notation, let $F: \TT\times [0,T) \to \R^2$, with $T <
\infty$, be a triod moving 
by curvature in its maximal time interval of smooth
existence. We assume that the lengths of the three curves of the triod
$\TT_t$ are uniformly bounded below away from zero and that $T<+\infty$. We
are going to show that the full $L^2$ norm of the 
curvature of the evolving triod stays uniformly bounded up to time $T$,
hence contradicting Proposition~\ref{curvexplod}.

We define the set of {\em reachable points} of the flow as
$$
\mathcal{R} = \bigl\{ x \in \R^2\,\bigl\vert\,\text{ there exist $p_i
\in \TT$ and $t_i \nearrow T$ such that $\lim_{i \to \infty}F(p_i , t_i) = x$}\bigr\}\,.
$$
Such a set is easily seen to be closed, contained in
$\overline{\Omega}$, hence compact, and the following lemma holds.

\begin{lemma}
A point $x \in \R^2$ belongs to $\mathcal{R}$ if and only if for every
time $t \in [0,T)$ the closed ball with center $x$ and radius
$\sqrt{2(T-t)}$ intersects $\TT_t$. 
\end{lemma}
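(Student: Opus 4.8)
The plan is to prove the characterization of reachable points by exploiting the localized comparison between the evolving triod and shrinking circles, which is exactly what is encoded in the backward heat kernel / monotonicity machinery but here can be handled by a direct barrier argument.

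\textbf{The two directions.} First I would treat the implication ``$x \in \mathcal{R} \Rightarrow$ every ball $\overline{B}_{\sqrt{2(T-t)}}(x)$ meets $\TT_t$''. The natural tool is the avoidance (comparison) principle between the curves $\gamma^i$ and a shrinking circle. Suppose for contradiction that at some $\overline{t} \in [0,T)$ the closed ball $\overline{B}_{\sqrt{2(T-\overline t)}}(x)$ does not meet $\TT_{\overline t}$; by compactness of $\TT_{\overline t}$ there is $\rho_0 > \sqrt{2(T-\overline t)}$ with $\overline B_{\rho_0}(x) \cap \TT_{\overline t} = \emptyset$. Now run the shrinking circle $\partial B_{\rho(t)}(x)$ with $\rho(t) = \sqrt{\rho_0^2 - 2(t-\overline t)}$, which is itself a curvature flow and stays a nonempty circle up to time $\overline t + \rho_0^2/2 > T$. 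Since the endpoints $P^i$ are fixed on $\partial\Omega$ and one must check they stay outside this ball — this is where I expect to need a small argument: a priori $P^i$ could lie inside $\overline B_{\rho_0}(x)$. The fix is to note that if $P^i \in \overline B_{\sqrt{2(T-\overline t)}}(x)$ for some $i$ then trivially the conclusion holds for all $t$ (the ball only grows backward in time and always contains $P^i$... actually it shrinks as $t \to T$), so one reduces to the case where all $P^i$ are at distance $> \sqrt{2(T-\overline t)}$ from $x$, and then by choosing $\rho_0$ close enough to $\sqrt{2(T-\overline t)}$ we keep the $P^i$ outside $\overline B_{\rho_0}(x)$ as well. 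With the circle disjoint from $\TT_{\overline t}$ at the initial time and never touching the (fixed) endpoints, the comparison principle for the triod (which, being a network of curves moving by curvature with fixed endpoints, cannot cross an embedded shrinking circle that avoids its boundary) keeps $\TT_t$ outside $\overline B_{\rho(t)}(x)$ for all $t \in [\overline t, T)$. But $\rho(t) > \sqrt{2(T-t)}$ on this whole interval (since $\rho_0^2 - 2(t - \overline t) - 2(T-t) = \rho_0^2 - 2(T - \overline t) > 0$), so $\overline B_{\sqrt{2(T-t)}}(x)$ stays disjoint from $\TT_t$, and taking $t_i \nearrow T$ forces $x \notin \mathcal{R}$, a contradiction.

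\textbf{The converse.} For the other direction, suppose every closed ball $\overline B_{\sqrt{2(T-t)}}(x)$ meets $\TT_t$; I must produce sequences $p_i \in \TT$, $t_i \nearrow T$ with $F(p_i, t_i) \to x$. Pick any $t_i \nearrow T$; by hypothesis there is $p_i \in \TT$ with $|F(p_i, t_i) - x| \le \sqrt{2(T - t_i)} \to 0$. That is literally the definition, so this direction is immediate — the only thing to remark is that $\sqrt{2(T-t_i)} \to 0$ guarantees $F(p_i, t_i) \to x$, hence $x \in \mathcal{R}$.

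\textbf{Main obstacle.} The substantive content is entirely in the forward direction, and within it the genuine difficulty is justifying the comparison principle between the evolving triod and the shrinking circle in the presence of the triple junction and the fixed boundary points. A clean way to phrase it: let $d(t) = \dist(O(t) \cup \bigcup_i \gamma^i([0,1],t),\ \overline B_{\rho(t)}(x))$ restricted to the relevant time interval; at a first time where $\TT_t$ would touch $\partial B_{\rho(t)}(x)$, the contact point is either an interior point of some $\gamma^i$ — excluded by the standard two-curves avoidance principle applied locally, since the triod is locally a smooth embedded curve there — or the triple junction $O(t)$, or an endpoint $P^i$. The endpoints are excluded by our choice of $\rho_0$ keeping them strictly outside. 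At the triple junction one uses that the three curves meet at $120^\circ$ and the circle is smooth and convex, so a tangential touching from outside at $O(t)$ would force, by the Hopf-type / strong maximum principle argument at a boundary contact (as already used in \cite{mannovtor} to keep the flow inside $\Omega$), an immediate separation; alternatively one invokes directly the confinement result used there, comparing with the complement of the shrinking ball exactly as $\Omega$ is used to confine the flow. I would cite the analogous confinement/comparison argument from \cite{mannovtor} (the one showing the evolving triod does not exit $\Omega$) rather than redo it, since the shrinking ball plays precisely the role of a time-dependent convex obstacle.
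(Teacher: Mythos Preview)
Your argument is correct in substance, but the paper takes a more direct and cleaner route that avoids the avoidance-principle machinery entirely. Instead of comparing with a shrinking circle, the paper simply differentiates the distance function $d_x(t)=\min_{p\in\TT}|F(p,t)-x|$: by Hamilton's trick, at a minimizing point $q$ one has
\[
\partial_t d_x(t)=\frac{\langle k\nu+\lambda\tau,\,F(q,t)-x\rangle}{|F(q,t)-x|}\ge -\frac{1}{d_x(t)}\,,
\]
since at an interior minimizer $(F(q,t)-x)$ is normal to the curve and $|k|\le 1/d_x(t)$ by comparison with the osculating circle. This integrates to $d_x^2(t)-d_x^2(s)\le 2(s-t)$, and sending $s=t_i\to T$ along a sequence realizing reachability gives $d_x^2(t)\le 2(T-t)$ immediately.

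What you gain by the paper's approach is that the 3--point and endpoint cases dissolve almost automatically: if the minimum is at a fixed endpoint the velocity there is zero so $\partial_t d_x=0\ge -1/d_x$; and the minimum cannot sit at the 3--point with $d_x>0$ since $\sum_i\tau^i=0$ forces $(O-x)\perp\tau^i$ for all $i$, which is impossible in $\R^2$. Your geometric barrier argument is valid but ends up deferring exactly this computation to a cited comparison principle, which is itself typically proved by the distance-derivative method---so the paper's proof is really the unwrapped core of yours. Incidentally, your worry about the endpoints lying inside $\overline B_{\rho_0}(x)$ is moot: since $P^i\in\TT_{\overline t}$ and you assumed $\overline B_{\sqrt{2(T-\overline t)}}(x)\cap\TT_{\overline t}=\emptyset$, the $P^i$ are automatically outside, and choosing $\rho_0$ close enough preserves this.
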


\begin{proof}
One of the two implications is trivial. We have to prove that if $x
\in \mathcal{R}$, then $F(\TT, t) \cap \overline{B}_{\sqrt{2(T-t)}}(x)
\neq \emptyset$. If $x$ is one of
the endpoints, the result is obvious, otherwise we define the function $d_x(t) = \inf_{p \in \TT} |F(p,t) - x|$, 
where, due to the compactness of $\TT$ the infimum is actually a
minimum and definitely, as $t\to T$, let us say for $t>t_x$ it cannot be taken at an endpoint,
by the assumption $x\in\mathcal{R}$. Since the function $d_x: [0,T) \to \R$ is locally Lipschitz, 
we can use Hamilton's trick to compute its time derivative and get 
(for any point $q$, different by an endpoint, where at time $t$ the minimum of $|F(p,t)
- q|$ is attained)
\begin{equation*}
\begin{split}
\partial_t d_x(t) & = \partial_t |F(q,t) - x| \geq \frac{\op k(q,t)\nu(q,t) + \l(q,t)
  \t(q,t) , F(q,t) - x \cl}{|F(q,t) - x|} \\
& = \frac{\op k(q,t) \nu(q,t)  , F(q,t)- x \cl}{|F(q,t) - x|} \geq -\frac{1}{d_x(t)}\,,
\end{split}
\end{equation*}
since at a point of minimum distance the vector 
$ \frac{F(q,t) -  x}{|F(q,t) - x|} $ 
is parallel to $\nu(q,t)$. Integrating
this inequality over time, we get
$$
d^2_x(t) - d^2_x(s) \leq 2(s-t) 
\qquad \quad{\rm for\ }s>t>t_x\,.
$$
We now use the hypothesis that $x$ is reachable (i.e. $\lim_{t_i \to T}
d_x(t_i) = 0$) and we conclude
$$
d^2_x(t) = \lim_{i \to \infty} [d^2_x(t) - d^2_x(t_i)] \leq 2 \lim_{i
  \to \infty} (t_i - t) = 2 (T-t)\,,
$$
for every $t>t_x$.
\end{proof}

As a consequence, when we consider the blow--up of the evolving
triods around points of $\overline{\Omega }$, we have a dichotomy
among them. Either the limit of any sequence of rescaled triods is
not empty and we are rescaling around a point in $\mathcal{R}$, or the
blow--up limit is empty, since the distance of the evolving triod from
the point of blow--up is too big. Conversely, if the blow up
point belongs to $\mathcal{R}$, the above lemma ensures that any
rescaled triod contains at least one point of the closed unit ball of
$\R^2$.

Fixing any point $x_0\in\mathcal{R}$, by 
Proposition~\ref{resclimit} there is a sequence $\tt_{i}
\nearrow \infty$ of rescaled triods such that
$\widetilde{\TT}_{x_0,\tt_j}$ converges in the $C^1\loc$ topology to a
nonempty limit which must be either a straight line, a halfline or an
infinite flat triod. Moreover, in every ball $B_R\in\R^2$, the $L^2$ norm of the curvature  along such sequence goes to zero as $j\to\infty$.

We start considering the case when the blow--up limit is a straight line.

\begin{prop}\label{noline}
If the sequence of rescaled triods $\widetilde{\TT}_{x_0,\tt_j}$ converges to a
straight line, then the curvature of the evolving triod is uniformly
bounded for $t\in[0,T)$ in a ball around the point $x_0$.
\end{prop}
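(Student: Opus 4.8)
The strategy is to convert the $C^1\loc$ convergence to a straight line (with $L^2$ curvature going to zero in every ball) into a genuine local graph representation of the evolving triod near $x_0$ on a fixed scale, and then invoke the Ecker--Huisken type interior estimates (Proposition~\ref{teoeh} and Proposition~\ref{prokappa}) to bound $|k|$ pointwise in that region, uniformly up to time $T$. First I would fix a scale: since $\widetilde{\TT}_{x_0,\tt_j}$ converges in $C^1\loc$ to a line through the origin, for $j$ large the rescaled triod is, inside a large ball $B_{7\rho}$ of $\R^2$, a single connected $C^1$ graph over that line, lying in a thin strip $\{|x_2|\le\delta\}$ with $\delta$ as small as we like (in particular $\delta<\rho$), and with $v=\langle\nu\,|\,e_2\rangle^{-1}$ bounded and $\sup_{B_{7\rho}}|k|^2$ arbitrarily small along the sequence. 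Unravelling the rescaling, this says that at the corresponding real time $t_j=T-e^{-2\tt_j}$, the triod $\TT_{t_j}$, in a Euclidean ball $B_{7R_j}(x_0)$ with $R_j=\rho\sqrt{2(T-t_j)}\to 0$, is a graph over a fixed direction (call it $\langle e_1\rangle$ after a rotation), contained in the strip $\{|x_2-(x_0)_2|\le R_j\}$, with $\sup_{B_{7R_j}(x_0)}|k|^2\le \varepsilon_j/R_j^2$ where $\varepsilon_j\to 0$ (the extra $R_j^{-2}$ is exactly the scaling weight of curvature).

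Next I would run the flow forward from $t_j$ using Corollary~\ref{corgraph}: since $\g_{t_j}\cap B_{7R_j}(x_0)$ is a graph in the strip $\{|x_2|\le R_j\}$, the corollary gives that $\g_t\cap B_{2R_j}(x_0)$ remains a graph over $\langle e_1\rangle$ for all $t\in[t_j,t_j+R_j^2/2)$, with the gradient bound $\sup v\le C\sup_{\g_{t_j}\cap B_{2R_j}(x_0)}v$ controlled. The crucial point is that $t_j+R_j^2/2 = t_j + \rho^2(T-t_j) > T$ once $\rho^2>1$ (choosing $\rho$, hence the initial ball radius, appropriately) — actually we need $t_j + R_j^2/2 \geq T$, i.e. $\rho^2 \geq 1$; taking $\rho$ slightly larger than $1$ suffices. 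So the graph property, and the gradient bound, persist on $B_{2R_j}(x_0)$ all the way up to time $T$. Then Proposition~\ref{prokappa} (applied with, say, $\theta=1/2$, $R=2R_j$, starting time $t_j$) yields
\[
\sup_{t\in[t_j,T)}\ \sup_{\g_t\cap B_{\sqrt{(\theta/2)(2R_j)^2}}(x_0)}|k|^2 \ \le\ \frac{C_v}{(1-\theta)^2}\Bigl(\frac{1}{(2R_j)^2}+\sup_{\g_{t_j}\cap B_{2R_j}(x_0)}|k|^2\Bigr)\ \le\ \frac{C}{R_j^2}\bigl(1+\varepsilon_j\bigr)\ \le\ \frac{C'}{R_j^2}\,.
\]
This already bounds $|k|$ near $x_0$ up to time $T$, but only by a quantity that blows up like $R_j^{-2}$ — which is not good enough. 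To get a genuinely uniform bound one exploits that $\varepsilon_j\to 0$: I would pick $j$ large enough that on some fixed Euclidean ball $B_{r_0}(x_0)$ (with $r_0$ independent of $j$, using that $B_{2R_j}(x_0)\supset$ a fixed ball is false — so instead one iterates). The cleaner route: run the argument not from the single time $t_j$ but observe that the graph/gradient bound, once established on $B_{2R_j}(x_0)$ up to $T$, can be fed back in — at a later time $t\in(t_j,T)$ the triod is still a graph in $B_{2R_j}(x_0)\supset B_{2R(t)}(x_0)$ with $R(t)=\rho\sqrt{2(T-t)}$, and Proposition~\ref{prokappa} applied with radius $\sim\sqrt{T-t}$ near time $t$ gives $|k|^2\lesssim 1/(T-t)$, which is the parabolic-scaling bound but still not uniform; however combining with the fact that the rescaled curvature $L^2$-norm tends to $0$ and the interior estimate for derivatives (Proposition~\ref{teoeh}, $m=0$, giving $t^{1}|k|^2\le C_v$ in the shrinking ball), one upgrades $C_v$ to be controlled by $\sup v$ which is uniformly bounded, and the pointwise bound $|k|^2\le C_v/(T-t)$ can be integrated.

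**Main obstacle.** The delicate step is the last one: transferring the \emph{scale-invariant} smallness (rescaled $\|k\|_{L^2}\to 0$, rescaled $\sup|k|$ bounded) into an \emph{absolute}, $t$-independent bound $\sup_{B_{r_0}(x_0)}|k|\le C$ on a \emph{fixed} ball. The subtlety is that Corollary~\ref{corgraph} and Proposition~\ref{prokappa} are applied on balls $B_{R_j}(x_0)$ whose radius shrinks to zero, so one must either (a) show that the graph/gradient control, once in force on $B_{2R_j}(x_0)$ up to time $T$, can be propagated outward to a fixed ball via a covering/continuation argument using that the limit is a straight line on \emph{all} of $\R^2$ (so the graph property holds on arbitrarily large rescaled balls), thereby letting one take $\rho$ as large as needed and $R_j$ comparable to a fixed $r_0$ for $j$ large — but $R_j=\rho\sqrt{2(T-t_j)}\to 0$ regardless, so this fails; or (b) use a standard parabolic rescaling/blow-up contradiction: if $\sup|k|$ on a fixed ball around $x_0$ were unbounded along $t\to T$, rescale at the points of maximal curvature to extract a nonflat limit, contradicting that \emph{every} blow-up at $x_0$ (for a suitable choice of rescaling times, by Proposition~\ref{resclimit}) is flat. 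I expect the actual proof in the paper takes route (b) or a variant: assume $\limsup_{t\to T}\sup_{B_{r_0}(x_0)}|k|=+\infty$, use Proposition~\ref{prokappa} to see this forces the graph hypothesis to fail at some scale — but the graph hypothesis was just shown to hold up to $T$ on $B_{2R_j}(x_0)$ — contradiction, provided one can anchor the argument at a time $t_j$ with $t_j$ close enough to $T$ that $B_{2R_j}(x_0)$ captures the region where curvature concentrates. Making this anchoring precise, i.e. ruling out curvature concentration at a Euclidean distance from $x_0$ that is large compared to $\sqrt{T-t}$ but small in absolute terms, is the real content and the step I would spend the most care on.
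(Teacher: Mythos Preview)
Your argument is essentially the paper's, and in fact you have already finished without realizing it. The bound you obtain,
\[
\sup_{t\in[t_j,T)}\ \sup_{\gamma_t\cap B_{cR_j}(x_0)}|k|^2 \ \le\ \frac{C'}{R_j^2}\,,
\]
\emph{is} the uniform bound the proposition asks for. The point you are missing is that $j$ is \emph{fixed}: once you have chosen $j=j_M$ large enough that the rescaled triod at time $\tt_{j_M}$ is a graph in $B_{7\rho}$, you never send $j\to\infty$ again. So $R_j=\rho\sqrt{2(T-t_j)}$ is a fixed positive number, $C'/R_j^2$ is a finite constant independent of $t$, and the ball $B_{cR_j}(x_0)$ has a fixed positive radius. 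On the compact interval $[0,t_j]$ the flow is smooth by hypothesis, so $|k|$ is trivially bounded there. That completes the proof: the proposition only asks for \emph{some} ball around $x_0$ and \emph{some} constant, not a ball or constant independent of the parabolic scale $\sqrt{T-t_j}$. Your ``main obstacle'' is therefore a phantom, and routes (a) and (b) are unnecessary. The paper's proof is exactly your first paragraph plus this observation; it applies Proposition~\ref{teoeh} (giving $(t-t_j)|k|^2\le C_v$, hence $|k|^2\le C_v/(T-t_j)$ for $t$ away from $t_j$) where you use Proposition~\ref{prokappa}, but either works.

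Two minor points. First, you claim $\sup_{B_{7\rho}}|\widetilde k|^2$ is small along the sequence, but Proposition~\ref{resclimit} only gives the $L^2$ norm going to zero; this does not matter, since at the fixed time $t_j$ the flow is smooth, so the supremum is finite and Proposition~\ref{prokappa} still yields a finite bound. Second, and more substantively, you assert that inside $B_{7\rho}$ the rescaled triod is ``a single connected $C^1$ graph'', but you should justify that the 3--point stays out of this ball not just at time $t_j$ but for all $t\in[t_j,T)$. This is where the paper explicitly invokes Lemma~\ref{remhot}: since the blow--up is a line, $\widehat\Theta(x_0)=1\neq 3/2$, and the lemma (via its proof) then gives that $|O(t)-x_0|$ is uniformly bounded below by some $\delta>0$; choosing $j$ large enough that $7R_j<\delta$ keeps the 3--point out of the ball throughout $[t_j,T)$, so only a single curve $\gamma^i$ is present there and the graph estimates apply cleanly.
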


\begin{proof}
Assume that there is a straight line $L$ through the
origin of $\R^2$ such that the sequence of rescaled triods
$\widetilde{\TT}_{x_0,\tt_j}$ converges to $L$ as $j\to\infty$.\\
Recalling Lemma~\ref{remhot} this implies that the distance
$|O(t)-x_0|$ is uniformly bounded from below, 
so that there exists $i\in \{1,2,3\}$ such that the rescaled curves 
$\frac{\g^i_{t_{j}}}{\sqrt{2(T-t_{j})}}$ converge to $L$ as $j\to\infty$. In particular,
for all $M>1$ there exists $j_M\in\mathbb N$ such that the curve 
$\g^i_{t_{j_M}} \cap B_{7M\sqrt{2(T-t_{j_M})}}(x_0)$ is a graph over the line $x_0+L$.
By Corollary~\ref{corgraph} it follows that 
$\g^i_{t} \cap B_{M\sqrt{2(T-t_{j_M})}}(x_0)$ is also a graph over the line $x_0+L$ 
for all $t\in [t_{j_M}, t_{j_M}+M^2(T-t_{j_M}))\supset [t_{j_M}, T)$,
and its slope $v^i$ (with respect to the line $x_0+L$) is 
uniformly bounded by a constant independent of $M$ and $t$.
Therefore, if $M>2$, from Proposition~\ref{teoeh} (applied with
$\theta=1/2$) it follows that the curvature of the curve $\g^i_{t}
\cap B_{M\sqrt{2(T-t_{j_M})}}(x_0)$ and all its derivatives are
bounded for $t\in [t_{j_M},T)$ and we are done.
\end{proof}

We then consider the case of a halfline.

\begin{prop}\label{nohalf}
If the sequence of rescaled triods $\widetilde{\TT}_{x_0,\tt_j}$
converges to a halfline, then the curvature of the evolving triod is
uniformly bounded for $t\in[0,T)$ in a ball around the point $x_0$.
\end{prop}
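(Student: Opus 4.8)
The plan is to reduce to the situation of Proposition~\ref{noline} by a reflection, the new feature being that the blow--up is now centred at one of the endpoints. First, since the limit is a halfline it has an endpoint at the origin; as in the proof of Proposition~\ref{resclimit}, this forces $x_0=P^i$ for some $i\in\{1,2,3\}$, and then $\widehat{\Theta}(P^i)=1/2$. Because $\widehat{\Theta}(P^i)\neq 3/2$, Lemma~\ref{remhot} gives that the $3$--point $O(t)$ stays at a fixed positive distance from $P^i$ as $t\to T$, so that near $T$ the curve $\g^i$ is the only one of the triod reaching $P^i$; one also checks, by the same monotonicity argument as in Lemma~\ref{remhot}, that no other curve $\g^j$ ($j\neq i$) can accumulate at $P^i$ along a sequence of times tending to $T$, for otherwise rescaling around $P^i$ along such times would produce a blow--up limit strictly larger than a halfline, contradicting $\widehat{\Theta}(P^i)=1/2$. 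Thus, up to a subsequence, the rescaled curves $(\g^i_{t_j}-P^i)/\sqrt{2(T-t_j)}$ converge in $C^1\loc$ to a halfline $L^+$ issuing from the origin; let $L$ be the straight line containing $L^+$.

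The crucial observation is that, since $\g^i(1,t)\equiv P^i$, the velocity $\g^i_t(1,t)=\underline v^i(1,t)$ vanishes for every $t$; as $\underline v^i=k^i\nu^i+\lambda^i\tau^i$, this forces $k^i(1,t)=0$ (and $\lambda^i(1,t)=0$) for all $t>0$, that is, the curvature of $\g^i$ at the fixed endpoint $P^i$ is identically zero. Consequently, if $L'$ denotes the line through $P^i$ tangent there to $\g^i$ (which converges to $L$ along $t_j$), the curve obtained by gluing $\g^i_t$ with its mirror image across $L'$ is of class $C^2$ across $P^i$ for every $t>0$, has zero velocity and zero curvature at $P^i$ (so the curvature flow equation is trivially satisfied there), and therefore defines a curvature flow $\widehat{\g}_t$, smooth near $P^i$ for $t>0$ by parabolic interior regularity, which has $P^i$ as an \emph{interior} point and exists for all $t\in(0,T)$. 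One then argues exactly as in Proposition~\ref{noline}: fixing $M>2$ and a large time $t_{j_M}$ with $\rho_M:=\sqrt{2(T-t_{j_M})}$, the $C^1\loc$ convergence makes $\widehat{\g}_{t_{j_M}}\cap B_{7M\rho_M}(P^i)$ a graph over $L'$ of arbitrarily small slope, so that the hypotheses of Corollary~\ref{corgraph} are satisfied with $R=M\rho_M$ (its proof, using only the comparison principle and the Sturmian theorem, applies to $\widehat{\g}$); this gives that $\widehat{\g}_t\cap B_{2M\rho_M}(P^i)$ stays a graph over $L'$ with uniformly bounded slope for all $t\in[t_{j_M},t_{j_M}+M^2(T-t_{j_M}))\supset[t_{j_M},T)$, and Proposition~\ref{teoeh} (with $\theta=1/2$) then bounds $|k|$ and all its derivatives on the fixed ball $B_{M\rho_M}(P^i)$ for $t$ bounded away from $t_{j_M}$. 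Combined with the smoothness of the flow on the compact interval $[0,t_{j_M}+\tfrac12(T-t_{j_M})]$, and with the fact from the first paragraph that no other curve of the triod meets $B_{M\rho_M}(P^i)$ near $T$, this yields the desired uniform bound on the curvature of the whole triod in a fixed ball around $x_0=P^i$.

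The step I expect to require the most care is the reflection, which rests on two ingredients: first, that $\g^i$ is regular (say $C^2$) up to the fixed Dirichlet endpoint $P^i$ for $t>0$, which should follow from parabolic boundary regularity as in~\cite{mannovtor}, so that both the tangent line $L'$ and the identity $k^i(1,t)=0$ make sense; and second, that the curve glued from $\g^i_t$ and its mirror image across $L'$ is genuinely a curvature flow, smooth for $t>0$, to which the interior estimates of Corollary~\ref{corgraph} and Proposition~\ref{teoeh} may be applied. The auxiliary claim that a small neighbourhood of $P^i$ is disjoint from $\g^j$, $j\neq i$, for $t$ close to $T$ also has to be extracted from the monotonicity formula, exactly as in the first part of the proof of Lemma~\ref{remhot}.
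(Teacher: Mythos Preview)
Your overall strategy---reflect about the endpoint so that the halfline becomes a full line and then invoke Proposition~\ref{noline}---is exactly the paper's, and your observations that $x_0$ must be an endpoint $P^i$ and that $k^i(1,t)=\lambda^i(1,t)=0$ there are correct and are what makes the glued curve $C^2$. The concrete gap is the reflection you actually perform. Reflecting $\gamma^i_t$ in the \emph{tangent} line $L'$ at $P^i$ fixes every vector along $L'$, in particular $\tau^i(1,t)$, so the mirror image issues from $P^i$ in the \emph{same} tangential direction as the original; the two copies do not glue into a single curve through $P^i$ but form two branches on the same side of it. There is a second obstruction even had you chosen the line through $P^i$ perpendicular to $L'$: since $\partial_t\tau^i(1,t)=(k_s^i+k^i\lambda^i)\nu^i\big\vert_{x=1}=k_s^i(1,t)\,\nu^i(1,t)$ need not vanish, $L'$ is time--dependent, and a reflection built from a moving line does not send solutions of the flow to solutions.

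The remedy, and what the paper does, is to use the \emph{point} reflection $x\mapsto 2P^i-x$ (central symmetry through $P^i$). This is a fixed isometry of $\R^2$, hence automatically preserves the curvature flow; the glued curve is $C^2$ at $P^i$ precisely because $\gamma^i_{xx}(1,t)=0$, and the rescaled glued curves converge in $C^1\loc$ to $L^+\cup(-L^+)=L$, so the proof of Proposition~\ref{noline} goes through verbatim. The paper in fact reflects the entire triod through $P^i$, obtaining a network of curves; this has the side benefit of rendering unnecessary your separate (and somewhat delicate) argument that the other curves $\gamma^j$ stay out of a fixed ball around $P^i$ for all $t$ close to $T$.
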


\begin{proof}
By the $C^1\loc$ convergence of the rescaled flow to the halfline,
we can see that the point $x_0$ must be one of the
endpoints of the triod, which we will denote with $P$. We now
perform a reflexion with center $P$ of the triod and we consider the
motion by curvature of the union of the two (mutually reflected
through $P$) triods which is still a motion by curvature, now of a
{\em network} of curves (see~\cite{mannovtor} for more details). 
Since at the endpoint $P$ the curvature vanishes by construction, 
the point $P$ stays fixed during the motion of the network and the
sequence of rescaled networks around $P=x_0$ converges in the $C^1\loc$
topology to a straight line. We can now repeat the proof of
Proposition~\ref{noline} to conclude.
\end{proof}

If there is no $x_0 \in \R^2$ with $\widehat{\Theta}(x_0)=3/2$,
by Propositions~\ref{noline} and~\ref{nohalf}, there exists a ball around every reachable 
point in which the curvature of the evolving triod is uniformly bounded for $t\in[0,T)$.\\
As the set of reachable points ${\mathcal{R}}$ is compact, it follows
that the curvature is uniformly bounded as $t\to T<+\infty$, which is
contradiction to Proposition~\ref{curvexplod}. Hence, we can assume that at some (unique) point $x_0\in\Omega$ we have $\widehat{\Theta}(x_0)=3/2$ and that the sequence of rescaled triods
$\widetilde{\TT}_{x_0,\tt_j}$ converges to 
an infinite flat triod $\TT_\infty$ centered at the origin. Furthermore,
the $L^2$ norm of the curvature of the rescaled triods goes to zero on
every compact subset of $\R^2$. 
By Lemma~\ref{remhot} this means that $x_0$ is the limit
of the 3--point $O(t)$ as $t\to T$. 
We write $\TT_\infty=L^1\cup L^2\cup L^3$ where the $L^i$'s are
halflines from the origin of $\R^2$.

In order to analyze the case of a flat triod arising as a blow--up
limit, we need some preliminary estimates, based on the following
Gagliardo--Nirenberg interpolation inequalities 
(see~\cite{adams,aubin0}, for instance).
\begin{prop}[Proposition~3.11 in~\cite{mannovtor}] 
\label{propint}
Let $\gamma$ be a smooth regular curve in
  $\R^2$ with finite length ${\mathrm L}$. If $u$ is a smooth function defined on
  $\gamma$ and $m\geq1$, $p\in[2,+\infty]$, we have the estimates
  \begin{equation*}
    {\Vert\partial_s^n u\Vert}_{L^p}
    \leq C_{n,m,p}
      {\Vert\partial_s^m  u\Vert}_{L^2}^{\sigma}
      {\Vert u\Vert}_{L^2}^{1-\sigma}+
      \frac{B_{n,m,p}}{{\mathrm L}^{m\sigma}}{\Vert u\Vert}_{L^2}
  \end{equation*}
  for every $n\in\{0,\dots, m-1\}$ where
$$
\sigma=\frac{n+1/2-1/p}{m}
$$
and the constants $C_{n,m,p}$ and $B_{n,m,p}$ are independent of $\gamma$.
\end{prop}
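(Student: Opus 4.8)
The plan is to reduce the statement to the classical Gagliardo--Nirenberg interpolation inequality on a bounded interval of $\R$, where all reference to the curve disappears, and then to recover the dependence on the length ${\mathrm L}$ by a scaling argument. First I would parametrize $\gamma$ by its arclength $s\in[0,{\mathrm L}]$. Since in this parametrization $\partial_s$ is just ordinary differentiation in the parameter, and the $L^p$ norms along $\gamma$ coincide with the $L^p$ norms of the corresponding functions on $[0,{\mathrm L}]$, the inequality becomes a purely one--dimensional statement about a single smooth function $u\colon[0,{\mathrm L}]\to\R$, with no trace left of the ambient $\R^2$ or of the particular curve. This is precisely what makes it possible to choose $C_{n,m,p}$ and $B_{n,m,p}$ independent of $\gamma$.

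Next I would remove the length by rescaling: setting $\widetilde u(\xi)=u({\mathrm L}\xi)$ for $\xi\in[0,1]$, one has $\partial_\xi^j\widetilde u={\mathrm L}^{j}\partial_s^j u$ and $\|\partial_s^j u\|_{L^p([0,{\mathrm L}])}={\mathrm L}^{1/p-j}\|\partial_\xi^j\widetilde u\|_{L^p([0,1])}$. Substituting these identities into the desired estimate and using the relation $m\sigma=n+1/2-1/p$, a short computation shows that the power of ${\mathrm L}$ multiplying the first term on the right--hand side is $0$ (the term is scale invariant), while the one multiplying the second term is exactly $-m\sigma$. Hence it suffices to prove the inequality for curves of unit length, with absolute constants.

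It then remains to establish the classical interpolation inequality $\|u^{(n)}\|_{L^p([0,1])}\le C_{n,m,p}\,\|u^{(m)}\|_{L^2}^{\sigma}\|u\|_{L^2}^{1-\sigma}+B_{n,m,p}\|u\|_{L^2}$, for $u\in C^\infty([0,1])$ and $n\in\{0,\dots,m-1\}$, with $\sigma=(n+1/2-1/p)/m$, which I would carry out along the usual lines (see~\cite{adams,aubin0}). The base case is the one--derivative bound $\|v\|_{L^\infty([0,1])}\le\sqrt2\,\|v'\|_{L^2}^{1/2}\|v\|_{L^2}^{1/2}+\|v\|_{L^2}$, obtained by writing $v(x)^2=v(y)^2+2\int_y^x vv'\,dt$ and integrating in $y$, followed by Cauchy--Schwarz; interpolating $L^p$ between $L^2$ and $L^\infty$ via Hölder's inequality yields the case $m=1$, $n=0$ for all $p\in[2,\infty]$, and applying it to $u^{(n)}$ gives the case $m=n+1$. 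For general $m$ one combines this with the interior interpolation of the intermediate derivatives, $\|u^{(j)}\|_{L^2([0,1])}\le C\|u^{(m)}\|_{L^2}^{j/m}\|u\|_{L^2}^{1-j/m}+C'\|u\|_{L^2}$ for $0<j<m$ (proved by the same fundamental--theorem--of--calculus argument on short subintervals, using the mean value theorem to locate a point of controlled derivative), and absorbs the resulting mixed powers by Young's inequality to reach the stated form.

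The steps are essentially routine; the only point that demands a little care is the scaling bookkeeping, namely checking that the exponent of ${\mathrm L}$ vanishes in the first term and equals $-m\sigma$ in the second — but this is just the algebraic content of the definition of $\sigma$. There is no genuine analytic obstacle here, because passing to the arclength parametrization trivializes the geometry and the uniformity of the constants in $\gamma$ becomes automatic.
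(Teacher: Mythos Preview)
The paper does not give its own proof of this statement; it is simply quoted as Proposition~3.11 of~\cite{mannovtor} (with further reference to~\cite{adams,aubin0}), so there is nothing to compare against beyond the standard literature. Your sketch is the correct standard route: passing to the arclength parametrization makes the estimate a purely one--dimensional statement on $[0,{\mathrm L}]$, your scaling computation is right (both sides scale like ${\mathrm L}^{1/p-n}={\mathrm L}^{1/2-m\sigma}$, which is exactly the definition of $\sigma$), and the unit--interval inequality then follows from the usual fundamental--theorem--of--calculus and interpolation arguments you outline.
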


\begin{lemma}\label{kkevol}
Let $F: \TT\times [0,T) \to \R^2$, with $T < \infty$, be a triod moving
by curvature {\em with moving endpoints} $Q^i:[0,T)\to\Omega$ such that
the lengths of the three curves are uniformly bounded from below away from
zero by $L>0$.\\
Then, for some constants $C_1>0$, $C_2>0$, independent of the triod, the following
estimate holds:
\begin{equation*}
\frac{d}{dt} \int_{\TT_t} k^2 \,ds \leq C_1 \Big( \int_{\TT_t} k^2 \,ds
\Big)^3 + \frac{C_2}{L}\Big(\int_{\TT_t} k^2\, ds\Big)^2 + 2 \sum^3_{i=1}\, k^i (k_s^i+\lambda^ik^i)\,
\biggr\vert_{\text{{\em at the point} $Q^i(t)$}} \,.
\end{equation*}
\end{lemma}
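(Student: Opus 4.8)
The plan is to differentiate $\int_{\TT_t} k^2\,ds$ directly, using the evolution equations from Lemma~\ref{evoluzioni} together with the commutation rule from Lemma~\ref{commute}. Recall that for a single curve evolving by curvature one has $\partial_t k = k_{ss} + k_s\lambda + k^3$, and that the arclength measure satisfies $\partial_t\,ds = (\lambda_s - k^2)\,ds$. Therefore, on each curve $\gamma^i$,
\[
\frac{d}{dt}\int_{\gamma^i_t} k^2\,ds = \int_{\gamma^i_t}\bigl(2k k_{ss} + 2k k_s\lambda + 2k^4 + k^2\lambda_s - k^4\bigr)\,ds
= \int_{\gamma^i_t}\bigl(2k k_{ss} + 2k k_s\lambda + k^4 + k^2\lambda_s\bigr)\,ds.
\]
Now I would integrate by parts the term $2k k_{ss}$, which produces $-2\int (k_s)^2\,ds$ plus the boundary contribution $2k k_s\big|_{\partial\gamma^i}$, and similarly combine $2k k_s\lambda + k^2\lambda_s = \partial_s(k^2\lambda) + k^2\lambda_s$... more carefully, $2kk_s\lambda + k^2\lambda_s = \partial_s(k^2\lambda)$, so this term integrates to the pure boundary term $k^2\lambda\big|_{\partial\gamma^i}$. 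Collecting the boundary terms at the moving endpoint $Q^i(t)$ and at the $3$--point, and using the compatibility relations at the $3$--point (the third lemma at the $3$--point, which gives $k^i_s + \lambda^i k^i$ independent of $i$, together with $\sum_i k^i = 0$), the interior $3$--point boundary contributions cancel, leaving exactly $2\sum_{i=1}^3 k^i(k^i_s + \lambda^i k^i)$ evaluated at $Q^i(t)$ (the sign and the precise combination $2kk_s + k^2\lambda = 2k(k_s + \tfrac12 k\lambda)$ should be reconciled with the stated $2k(k_s+\lambda k)$ by a more careful bookkeeping of which boundary the outward arclength orientation points toward, and by reusing the identity $\lambda^i = (k^{i-1}-k^{i+1})/\sqrt3$ at the junction).

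Having isolated the boundary term, the remaining interior contribution is
\[
-2\int_{\TT_t}(k_s)^2\,ds + \int_{\TT_t} k^4\,ds,
\]
and the task becomes to bound $\int k^4\,ds$ by the right-hand side of the claimed inequality. Here I would invoke the Gagliardo--Nirenberg interpolation inequality stated just before the lemma, applied on each curve $\gamma^i$ (which has length bounded below by $L$, and bounded above since the triod stays in $\Omega$): with $u = k$, $m=1$, $p=4$, $n=0$ one gets $\|k\|_{L^4} \leq C\|k_s\|_{L^2}^{\sigma}\|k\|_{L^2}^{1-\sigma} + (B/L^{\sigma})\|k\|_{L^2}$ with $\sigma = 1/4$. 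Raising to the fourth power and using Young's inequality to absorb the $\|k_s\|_{L^2}^2$ factor into the good negative term $-2\int(k_s)^2\,ds$, one is left with terms of the form $(\int k^2\,ds)^3$ (from the fully-absorbed piece, after tracking the exponents: $\|k_s\|_{L^2}^{4\sigma}\|k\|_{L^2}^{4-4\sigma} = \|k_s\|_{L^2}\|k\|_{L^2}^3$, and Young on $\|k_s\|_{L^2}\cdot\|k\|_{L^2}^3$ against $\varepsilon\|k_s\|_{L^2}^2$ leaves $C_\varepsilon\|k\|_{L^2}^6 = C_\varepsilon(\int k^2)^3$) and $L^{-1}(\int k^2\,ds)^2$ (from the lower-order $B/L^{\sigma}$ piece, cross terms). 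Summing over $i=1,2,3$ and relabeling constants yields the stated bound.

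The main obstacle I anticipate is the careful bookkeeping of the boundary terms at the $3$--point: one must verify that the contributions from the three curves at $O$ cancel, and this is where the compatibility conditions (the $3$--point lemma giving $\sum k^i = \sum \lambda^i = 0$ and $k^i_s + \lambda^i k^i$ being independent of $i$) are essential. Concretely, the $3$--point boundary contribution from the $2kk_{ss}$ integration by parts and from the $\partial_s(k^2\lambda)$ term should combine into something proportional to $\bigl(\sum_i k^i\bigr)\cdot(\text{common value of } k^i_s+\lambda^ik^i)$ plus a term involving $\sum_i k^i\lambda^i \cdot(\cdots)$, all of which vanish by the compatibility relations — but getting the orientation of the outward arclength at $O$ consistent across the three curves, and matching the precise coefficient in front of the $Q^i$-boundary term, requires patience. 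The interpolation/Young's step is routine once the curve-length bounds (below by hypothesis, above by confinement in $\Omega$) are in hand, so the only real content is the cancellation at the junction.
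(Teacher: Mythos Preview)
Your approach is exactly the paper's: differentiate using Lemma~\ref{evoluzioni} and the measure evolution $\partial_t\,ds=(\lambda_s-k^2)\,ds$, integrate by parts to get $-2\int k_s^2\,ds+\int k^4\,ds$ plus boundary contributions, kill the 3--point contribution via the compatibility relations, and then bound $\int k^4\,ds$ by Gagliardo--Nirenberg (with $n=0$, $m=1$, $p=4$, so $\sigma=1/4$) plus Young to absorb $\|k_s\|_{L^2}^2$ into the good term.

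Your hesitation about the boundary coefficients is well--placed and worth making precise rather than deferring to ``bookkeeping''. A direct computation gives the boundary term $2kk_s+k^2\lambda$ at each end, so at $Q^i$ one gets $k^i(2k_s^i+\lambda^i k^i)$ rather than the $2k^i(k_s^i+\lambda^i k^i)$ displayed in the statement; for the application later in the paper this is immaterial since both are estimated identically via the pointwise bounds~\eqref{stimaks}--\eqref{equanime}. More delicately, at the 3--point the contribution $-\sum_i\bigl(2k^ik_s^i+(k^i)^2\lambda^i\bigr)$ reduces, after using $k_s^i+\lambda^ik^i\equiv K$ and $\sum_ik^i=0$, to $+\sum_i(k^i)^2\lambda^i$; this is not automatically zero from the relations you list (it equals $(k^1-k^2)(k^2-k^3)(k^3-k^1)/\sqrt3$). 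The paper does not carry out this reduction but invokes the ``orthogonality relation~(2.10)'' from~\cite{mannovtor} for the vanishing. If you want to make the argument self--contained, note that any such leftover 3--point term is cubic in the $k^i$ and hence $\lesssim\|k\|_{L^\infty}^3$, which is again controlled by the same interpolation and absorbed into the $C_1\bigl(\int k^2\bigr)^3$ term.
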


\begin{proof}
Using Lemma~\ref{evoluzioni} and integrating by parts (for more details
refer to computations~(3.4) and~(3.5) in~\cite{mannovtor}), 
we get
\begin{align*}
\frac{d}{dt} \int_{\TT_t} k^2\, ds =&\, - 2 \int_{\TT_t} k_s^2\, ds 
+ \int_{\TT_t} k^4\, ds - \sum^3_{i=1} \,k^i(k^i_s+\l^i
{k^i})\,\biggr\vert_{\text{{ at      the 3--point}}}\\
&\,+ 2 \sum^3_{i=1}\, k^i (k_s^i+\lambda^ik^i)\,
\biggr\vert_{\text{{ at the point} $Q^i(t)$}} \\
=&\, - 2 \int_{\TT_t} k_s^2\, ds 
+ \int_{\TT_t} k^4\, ds + 2 \sum^3_{i=1}\, k^i (k_s^i+\lambda^ik^i)\,
\biggr\vert_{\text{{ at the point} $Q^i(t)$}}\,,
\end{align*}
where we applied the ``orthogonality'' relation~(2.10)
in~\cite{mannovtor}, saying that the 3--point contribution above is zero.\\
Letting $L$ to be the minimum of the length of the three curves of the triod, 
by Proposition~\ref{propint} (applied to $u=k$
and having set $p=4,\,n=0,\,m=1,\,\sigma=1/4$) 
and Peter--Paul
inequality, for any $\e > 0$ we have the interpolation estimate
\begin{equation*}
\begin{split}
\int_{\TT_t} k^4\,ds & \leq\left[ C \Big( \int_{\TT_t} k_s^2\, ds \Big)^{1/8}
\Big( \int_{\TT_t}\, k^2 ds \Big)^{3/8} +\frac{C}{L^{1/4}}\Big( \int_{\TT_t} k^2\, ds\Big)^{1/2}\right]^4 \\
& \leq C \Big( \int_{\TT_t} k_s^2\, ds \Big)^{1/2}
\Big( \int_{\TT_t}\, k^2 ds \Big)^{3/2} + \frac{C}{L} \Big( \int_{\TT_t} k^2\, ds\Big)^2 \\
& \leq \e \int_{\TT_t} k_s^2\, ds + C_1\Big(
\int_{\TT_t}k^2\, ds \Big)^3+ \frac{C_2}{L}\Big(\int_{\TT_t} k^2\, ds\Big)^2\,,\\
\end{split}
\end{equation*}
where the constants $C_1,\,C_2$ depend on $\eps$.
Substituting in the last equation above, after taking $\e<2$, we get the thesis.
\end{proof}

We are now ready to prove the main result of the paper.

\begin{proof}[Proof of Theorem~\ref{corfin}] 
Since the subset $\mathcal I$ of  $[-1/2\log T,+\infty)$ defined by ${\mathcal{I}}=\cup_{j=1}^\infty(\tt_j+\log{\sqrt{3/2}},\tt_j+\log{\sqrt{3}})$ has obviously 
infinite Lebesgue measure, by Proposition~\ref{resclimit}, we can
assume that there exists another sequence of rescaled triods
$\widetilde{\TT}_{x_0,\widetilde{\tt}_j}$, with
$\widetilde{\tt}_j\in(\tt_j+\log{\sqrt{3/2}},\tt_j+\log{\sqrt{3}})$
for every $j\in\NN$,  which is also $C^1\loc$ converging to a flat
triod ({\em a priori} not necessarily the same one) centered at the origin
of $\R^2$ as $j\to\infty$. Indeed, even if 
the two blow--up limits are different, they both must be a flat triod,
as equality~\eqref{gggg} must hold for both of them. Moreover, the $L^2$ norm of the curvature
of the modified sequence of rescaled triods, as well as the one of the original sequence of rescaled triods, converges to zero on every compact subset of $\R^2$.\\
Finally, passing to a subsequence, we can also assume that $\tt_j$ and
$\widetilde{\tt}_j$ (hence, also $t_j$ and $\widetilde{t}_j$) are
increasing sequences.\\
Notice that, by means of the rescaling relation
$\tt(t)=-\frac{1}{2}\log{(T-t)}$, the condition
$\widetilde{\tt}_j\in(\tt_j+\log{\sqrt{3/2}},\tt_j+\log{\sqrt{3}})$
reads, for the original time parameter, as
$\widetilde{t}_j\in\left(\frac23 t_{j_M}+ \frac 13 T,\frac 13 t_{j_M}
  + \frac23 T\right)$.

Repeating the argument in the proof of Proposition~\ref{noline}, 
for any $M$ large enough there exists $j_M$ such that for all
$i\in\{1,2,3\}$ the curve $\g^i_{t}\cap B_{5M\sqrt{2(T-t_{j_M})}}(x_0)\setminus B_{M\sqrt{2(T-t_{j_M})}}(x_0)$ 
is a graph over $x_0+L^i$ for all $t\in [t_{j_M},T)$, with slope (with
respect to the line $x_0+L^i$) uniformly bounded by a constant
$C_v$ independent of $M$ and $t\in[t_{j_M},T)$ 
(here and in the sequel we denote by $C_v$ 
a generic constant, depending on $v$, which may vary from line to line). Moreover, by
Lemma~\ref{remhot}, we can also assume that the 3--point $O(t)$ in
this time interval does not get into the annulus 
$B_{5M\sqrt{2(T-t_{j_M})}}(x_0)\setminus
B_{M\sqrt{2(T-t_{j_M})}}(x_0)$.\\
By Proposition~\ref{teoeh}, with $\theta<1/2<9/16+\frac{1}{2M^2}$, it follows that the subsequent evolution of the curves 
$$
\gamma^i_{t_M}\cap \Bigl(B_{4M\sqrt{2(T-t_{j_M})}}(x_0)\setminus B_{2M\sqrt{2(T-t_{j_M})}}(x_0)\Bigr)\,,
$$
that, with an abuse of notation as we cannot exclude that other parts of
$\TT_t$ get into the annulus $B_{4M\sqrt{2(T-t_{j_M})}}(x_0)\setminus
B_{2M\sqrt{2(T-t_{j_M})}}(x_0)$, we still denote by 
$$
\gamma^i_{t}\cap \Bigl(B_{4M\sqrt{2(T-t_{j_M})}}(x_0)\setminus B_{2M\sqrt{2(T-t_{j_M})}}(x_0)\Bigr)\,,
$$
for $i\in\{1,2,3\}$, are smooth evolutions for all $t\in [t_{j_M},T)$ and 
the following estimate holds
\begin{equation}
\label{stimaks}
|k_s^i(t)|^2 \le\frac{C_v}{(t-{t}_{j_M})^2}\le\frac{C_v}{(\widetilde{t}_{j_M}-{t}_{j_M})^2}\leq 
\frac{9C_v}{(T-{t}_{j_M})^2}\,,
\end{equation}
for all $t\in [\widetilde t_{j_M},T)$, where the constant $C_v$
depends only on the slope with respect to the line $x_0+L^i$.\\
Since, by Proposition~\ref{resclimit}, the
$L^2$ norm of the curvature (in the rescaled ball
$\widetilde{B}_{5M}(0)$) of the 
sequence of rescaled triods $\widetilde{\TT}_{x_0,\widetilde{\tt}_j}$, which is given by 
$$
\sqrt{2(T-\widetilde{t}_j)}\int_{\TT_{\widetilde{t}_j}\cap B_{5M\sqrt{2(T- t_{j_M})}}(x_0)} k^2\, ds\,,
$$
converges to zero as $j\to\infty$, the above estimate~\eqref{stimaks} on the
derivative of the curvature, which for the sequence of rescaled triods
becomes $\vert\widetilde{k}_s^i(\tt_j)\vert\leq 3\sqrt{C}$, implies
that the $L^\infty$ norm of the curvature of the rescalings of the curves 
$$
{\g}^i_{\widetilde{t}_j} \cap \Bigl(B_{4M\sqrt{2(T-t_{j_M})}}(x_0)\setminus B_{2M\sqrt{2(T-t_{j_M})}}(x_0)\Bigr)\,,
$$
which is given by 
$$
\sqrt{2(T-\widetilde{t}_j)}\,\left(\sup_{\TT_{\widetilde{t}_j}\cap
    \bigl(B_{4M\sqrt{2(T- t_{j_M})}}(x_0)\setminus B_{2M\sqrt{2(T-
        t_{j_M})}}(x_0)\Big)} \vert k\vert\right)\,,
$$
converges to zero as $j\to\infty$.\\
Since the above argument holds not only for $j_M$ but for every $j\geq
j_M$, fixed any $\eps\in (0,1/2)$, first considering an $M>2$ large enough and then choosing a suitably large $j_M$, we can
assume that
\begin{align}
\bullet\,\,\,\,&\,\text{$M>\max\{1/\sqrt{\eps},C_2/\eps^{1/3}\}$, where the
  constant $C_2$ is the one appearing in
  Lemma~\ref{kkevol},}\nonumber\\
\bullet\,\,\,\,&\,\int_{\TT_{\widetilde{t}_{j_M}}\cap B_{5M\sqrt{2(T- t_{j_M})}}(x_0)}
k^2\, ds \le\frac{\eps}{\sqrt{2(T-\widetilde{t}_{j_M})}}\le\frac{\sqrt{3}\eps}{\sqrt{2(T-{t}_{j_M})}}\,,\label{eqeps1}\\
\bullet\,\,\,\,&\,\sup_{\TT_{\widetilde{t}_{j_M}}\cap\bigl(B_{4M\sqrt{2(T-
      t_{j_M})}}(x_0)\setminus B_{2M\sqrt{2(T-t_{j_M})}}(x_0)\bigr)}
k^2 \le\frac{\eps}{{2(T-\widetilde{t}_{j_M})}}
\le\frac{3\eps}{{2(T-{t}_{j_M})}}\,.\label{eqeps2}
\end{align}

By Proposition~\ref{prokappa}, as $M>2$, at the points 
$$
\g^i_{t} \cap \Bigl(B_{\frac{7}{2}M\sqrt{2(T-t_{j_M})}}(x_0)\setminus B_{\frac{5}{2}M\sqrt{2(T-t_{j_M})}}(x_0)\Bigr)\,,
$$
we have the estimate
\begin{equation*}
|k^i(t)|^2 \le C_v \left( \sup_{\g^i_{\widetilde t_{j_M}} 
\cap \big(B_{4M\sqrt{2(T-t_{j_M})}}(x_0)\setminus
B_{2M\sqrt{2(T-t_{j_M})}}(x_0)\big)}|k^i|^{2}
+ \frac{1}{M^2(T-t_{j_M})}\, \right)
\end{equation*}
for all $t\in [\widetilde t_{j_M},T)$, with a constant $C_v$ 
depending only on the slope of the curve with respect to the line
$x_0+L^i$, which is uniformly bounded. Thus, by the above estimate~\eqref{eqeps2} we
get
\begin{equation}\label{equanime}
|k^i(t)|^2 \le \frac{C_v}{T-t_{j_M}} \Bigl({\eps} + \frac{1}{M^2}\,\Big)\leq
 \frac{2C_v\eps}{T-t_{j_M}}\,
\end{equation}
as we already chose $M^2>1/{\eps}$ above, 
for all the points of the curve $\g^i_{t} \cap
\Bigl(B_{\frac{7}{2}M\sqrt{2(T-t_{j_M})}}(x_0)\setminus
B_{\frac{5}{2}M\sqrt{2(T-t_{j_M})}}(x_0)\Bigr)$ and times $t\in
[\widetilde t_{j_M},T)$. We want to underline once more that the constant $C$ depends only on the slope
of the curve with respect to the line $x_0+L^i$.

It follows that for every $t\in [\widetilde{t}_{j_M},T)$, all the
triods $\widehat{\TTT}_t$ determined by ``cutting'' $\TT_t$ at the new
(moving in time) endpoints $Q^i(t)=\g^i_{t} \cap\partial
B_{3M\sqrt{2(T-t_{j_M})}}(x_0)$ have the lengths of their three curves
uniformly bounded away from zero from below and unit tangent vectors at the
endpoints $Q^i(t)$ which form angles with the respective velocity
vectors $\partial_tQ^i(t)$ which are also bounded away from zero,
uniformly in time, because of the uniform control on the slope of the
curves with respect to the line $x_0+L^i$. This implies that the norm
of the curvature $\vert k^i(Q^i(t))\vert$ at any endpoint $Q^i(t)$
controls the norm of the tangential velocity
$\vert\lambda^i(Q^i(t))\vert$, up to a multiplicative constant $C_v$ (depending only on the slope),
uniformly bounded in time for $t\in [\widetilde t_{j_M},T)$.\\
Then, from estimates~\eqref{stimaks},~\eqref{equanime}, we conclude
\begin{align*}
\Bigl|k^i(Q^i(t))k_s^i(Q^i(t))\Bigr|\le&\, \frac{C_v{\eps}^{1/2}}{(T-t_{j_M})^\frac 3 2}\,,\\
\Bigl|[k^i(Q^i(t))]^2\lambda^i(Q^i(t))\Bigr|\le&\, C_v\Bigl\vert
k^i(Q^i(t))\Bigr\vert^3 \le\frac{C_v{\eps}^{3/2}}{(T-t_{j_M})^\frac 3
  2}\,,
\end{align*}
for every $t\in [\widetilde t_{j_M},T)$, where the constant $C_v$ depends only on the slope
of the curve with respect to the line $x_0+L^i$. Moreover, we can clearly always increase $j_M$ as
we like without affecting $C_v$ (this is actually true for every constant depends only on the slope
of the curve with respect to the line $x_0+L^i$), since, by Proposition~\ref{noline}, as $j\to\infty$, the three
curves
$$
\gamma_{t_j}^i\cap B_{5M\sqrt{2(T- t_{j_M})}}(x_0)\setminus
B_{M\sqrt{2(T-t_{j_M})}}(x_0)
$$
converge to a smooth limit. Hence, we can also assume that
$2C_v\eps^{1/6}<1$ and $2(C_1+C_v+1)\eps^{1/3}<1$.

At this point we observe that the length of every
curve of the triod (being all the curves graphs in the annulus
$B_{3M\sqrt{2(T-t_{j_M})}}(x_0)\setminus
B_{2M\sqrt{2(T-t_{j_M})}}(x_0)$) is bounded from below by a uniform factor
(depending only on the slope
$v$) times $M\sqrt{T-t_{j_M}}$. Then, by means of Lemma~\ref{kkevol}, we now prove
an inequality for the time derivative of the $L^2$ norm of
the curvature of the triods $\widehat{\TT}_t$ which are determined by the three
(moving in time) endpoints $Q^i(t)$, for $t\in [\widetilde t_{j_M},T)$.
Notice that here the constants $C_1$
and $C_2$ are ``universal'', $C_v$ depends only on the slope of the curve
with respect to the line $x_0+L^i$ and we use the two previous inequalities to
estimate the terms coming from the endpoints:
\begin{align*}
\frac{d}{dt} \int_{\widehat{\TT}_t} k^2 \,ds
\leq &\,C_1 \Big( \int_{\widehat{\TT}_t} k^2 \,ds
\Big)^3 +\frac{C_2C_v}{M\sqrt{T-t_{j_M}}}\Big(
\int_{\widehat{\TT}_t} k^2\, ds\Big)^2
+ \frac{C_v{\eps}^{1/2}}{(T-t_{j_M})^\frac 3 2}\\
\leq &\,C_1 \Big( \int_{\widehat{\TT}_t} k^2 \,ds
\Big)^3 + \frac{C_v \eps^\frac 1 {3}}{\sqrt{T-t_{j_M}}}\Big(
\int_{\widehat{\TT}_t} k^2\, ds\Big)^2
+ \frac{C_v{\eps}^{1/2}}{(T-t_{j_M})^\frac 3 2}\\
\leq &\,C_1 \Big( \int_{\widehat{\TT}_t} k^2 \,ds
\Big)^3 + \frac{\eps^\frac 1 {6}}{\sqrt{T-t_{j_M}}}\Big(
\int_{\widehat{\TT}_t} k^2\, ds\Big)^2
+ \frac{C_v{\eps}^{1/2}}{(T-t_{j_M})^\frac 3 2}\,,
\end{align*}
as we chose $M>C_2/{\eps^\frac 1 {3}}$ and $2C_v\eps^{1/6}<1$.\\
Then, letting
$$
A(t):= \max\left\{ \int_{\widehat{\TT}_t}k^2\,ds\,,\,\frac{\eps^\frac 1 {6}}{\sqrt{T-t_{j_M}}}\right\} \,,
$$
it follows
\begin{equation*}
A^\prime(t)\le \overline{C}_v A^3(t)
\end{equation*}
for almost every $t\in [\widetilde t_{j_M},T)$, where the constant $\overline{C}_v$ is given by $C_1+C_v+1$.\\
Integrating this differential inequality and recalling estimate~\eqref{eqeps1}, implying that 
$$
A(\widetilde{t}_{j_M})\leq
\max\left\{\frac{\sqrt{3}\eps}{\sqrt{2(T-t_{j_M})}}\,,\,\frac{\eps^\frac
    1 {6}}{\sqrt{T-t_{j_M}}}\right\}\leq\frac{\eps^\frac 1
  {6}}{\sqrt{T-t_{j_M}}}\,,
$$
as $\eps<1/2$, we get
\begin{equation*}
A(t)\le \frac{1}{\sqrt{A(\widetilde t_{j_M})^{-2}-2\overline{C}_v(t-\widetilde t_{j_M})}}\,,
\end{equation*}
hence,
\begin{equation*}
A(t)\le\frac{\eps^\frac 1{6}}{\sqrt{T-t_{j_M}
-2\overline{C}_v\eps^\frac 1 {3} (t-\widetilde t_{j_M})}}\,,
\end{equation*}
for every $t\in[\widetilde t_{j_M},T)$.\\
As $(t-\widetilde{t}_{j_M})\leq (T-t_{j_M})$, it follows that the
function $A(t)$ is uniformly bounded on $[\widetilde t_{j_M},T)$ as
soon as $2\overline{C}_v\eps^{\frac{1}{3}}<1$, which is satisfied by our
previous assumption on $\eps>0$.

We now notice that the three curves of the triod $\TT_t$, connecting
respectively the points $P^i$ and $Q^i$ (determined by
$\TT_t\setminus\widehat{\TT}_t$) cannot get too close
to the point $x_0=\lim_{t\to T}O(t)$ along the flow. Indeed, the parts of these
curves in the annulus 
$$
B_{5M\sqrt{2(T-t_{j_M})}}(x_0)\setminus
B_{3M\sqrt{2(T-t_{j_M})}}(x_0)
$$
are graphs for every 
$t\in[\widetilde t_{j_M},T)$, while the remaining pieces ``outside'' at
time $t=\widetilde{t}_{j_M}$, by maximum principle, during their
subsequent evolution can never get into the circle of radius
$R(t)=\sqrt{16M^2(T-t_{j_M})-2(t-t_{j_M})}$ and center $x_0$, also
moving by mean curvature in the time interval $[\widetilde t_{j_M},T)$
and, as $t\to T$, converging to the circle of radius
$$
\sqrt{16M^2(T-t_{j_M})-2(T-t_{j_M})}=\sqrt{(16M^2-2)(T-t_{j_M})}\,,
$$
which is clearly positive as $M^2>2$, hence far from the point $x_0$.\\
Consequently, since the closed subset of the set of reachable points obtained
as possible limit points of these three curves as
 $t\to T$ is contained in a closed set far from $x_0$, by
 Propositions~\ref{noline} and~\ref{nohalf}, we can
 cover such a set by a finite number of balls where the curvature of
 the evolving triod is uniformly bounded during the flow.
Being also the total length of the evolving triods uniformly bounded and being the $L^2$ norm of the curvature of the ``subtriods'' $\widehat{\TT}_t$,
given by the square root of the uniformly bounded function $A(t)$, we conclude that the full $L^2$ norm of the
curvature of the evolving triods ${\TT}_t$ is bounded, in
contradiction with Proposition~\ref{curvexplod}. This concludes the proof.
\end{proof}

\begin{Remark}
We point out that the ``regularity'' part of the main result of this paper, namely
Theorem~\ref{teomain}, can be extended with a similar proof to a triod
evolving by curvature with Neumann boundary conditions (the convergence statement does not hold in general, 
as in this case Steiner triods are unstable and possibly nonunique, see~\cite{freire1,garkoh,itokyan}). 
Moreover, whenever the classification given in
Proposition~\ref{resclimit} holds, the same proof also applies to the evolution of a network with multiple
triple junctions. For instance, this is true for a
network without loops and with at most {\em two} triple
junctions. Indeed, in this case, Proposition~\ref{dlteo} still holds 
and all the subsequent arguments can be adapted with minor
modifications.\\
In this respect, we take the occasion to underline a mistake
in~\cite[Remark 4.5]{mannovtor} (pointed out to us by T.~Ilmanen), 
where the authors claim that Proposition~\ref{dlteo} (Theorem~4.6 in~\cite{mannovtor}) holds for 
{\em any} network (without loops), {\em without any constraint on the number of
triple junctions}. Actually, the proof of Proposition~\ref{dlteo} can
be generalized only to networks in the plane with {\em at most two
  triple junctions}.
\end{Remark}

\bibliographystyle{amsplain}
\bibliography{biblio}

\end{document}